\crefname{equation}{}{}
\crefname{lemma}{Lemma}{Lemmas}
\crefname{theorem}{Theorem}{Theorems}
\crefname{discr}{Discretization}{Discretizations}
\crefname{subsection}{Subsection}{Subsections}
\apptocmd{\sloppy}{\hbadness 10000\relax}{}{}
\newcommand{\dual}[1]{\langle {#1} \rangle}
\newcommand{\nm}[1]{\lVert {#1} \rVert}
\newcommand{\Nm}[1]{\Big\| {#1} \Big\|}
\newcommand{\snm}[1]{\lvert {#1} \rvert}
\newcommand{\ssnm}[1]
{
  \left\vert\kern-0.25ex
  \left\vert\kern-0.25ex
  \left\vert
  {#1}
  \right\vert\kern-0.25ex
  \right\vert\kern-0.25ex
  \right\vert
}
\def\spher@harm#1{%
  \vbox{\hbox{%
    \offinterlineskip
    \valign{&\hb@xt@2\p@{\hss$##$\hss}\vskip.2ex\cr#1\crcr}%
  }\vskip-.36ex}%
}
\def\gshone{\spher@harm{.}}
\def\gshtwo{\spher@harm{.&.}}
\def\gshthree{\spher@harm{.&.&.}}
\let\gsh\spher@harm
\newtheorem{lemma}{Lemma}[section]
\newtheorem{remark}{Remark}[section]
\newtheorem{theorem}{Theorem}[section]
\def\@captype{table}\makeatother
\begin{document}

\title{
  \Large\bf Discretization of a distributed optimal control problem with a stochastic
  parabolic equation driven by multiplicative noise
  \thanks{
    Binjie Li was supported in part by the National Natural
    Science Foundation of China (11901410).
  }
}
\author{Binjie Li\thanks{Corresponding author: libinjie@scu.edu.cn}}
\author{Qin Zhou\thanks{zqmath@aliyun.com}}
\affil{School of Mathematics, Sichuan University, Chengdu 610064, China}

\date{}
\maketitle

\begin{abstract} 
  A discretization of an optimal control problem of a stochastic parabolic
  equation driven by multiplicative noise is analyzed. The state equation is
  discretized by the continuous piecewise linear element method in space and by
  the backward Euler scheme in time. The convergence rate $ O(\tau^{1/2} + h^2)
  $ is rigorously derived.
\end{abstract}

\medskip\noindent{\bf Keywords:} optimal control, stochastic parabolic equation,
Brownian motion, discretization, convergence

\section{Introduction}
Let $ \{W(t) \mid\, t \geqslant 0\} $ be a one-dimensional Brownian motion
defined on a complete probability space $ (\Omega, \mathcal F, \mathbb P) $, and
let $ \mathbb F = \{\mathcal F_t\}_{t \geqslant 0} $ be the natural filtration
of $ \{W(t) \mid \, t \geqslant 0\} $. We use $ \mathbb EX$ to denote the
expectation of a scalar/vector-valued random variable $ X $ defined on $
(\Omega, \mathcal F, \mathbb P) $. Let $ \mathcal O \subset \mathbb R^d $ ($d=1,
2,3$) be a convex polytope, and let $ \Delta $ be the realization of the Laplace
operator with homogeneous Dirichlet boundary condition in $ L^2(\mathcal O) $.
Assume that $ 0 < \nu, T < \infty $. Let $ L_{\mathbb F}^2(\Omega;L^2(0,T;
L^2(\mathcal O))) $ be the set of all $ L^2(\mathcal O) $ valued $ \mathbb F
$-progressively measurable processes belonging to $
L^2(\Omega;L^2(0,T;L^2(\mathcal O))) $.
We consider the following stochastic optimal control problem:
\begin{small}
\begin{equation}
  \label{eq:model}
  \min_{
    \substack{
      u \in U_\text{ad} \\
      y \in L_{\mathbb F}^2(\Omega;L^2(0,T;L^2(\mathcal O)))
    }
  } J(u,y) := \frac12\mathbb E \Big(
    \nm{y-y_d}_{L^2(0,T;L^2(\mathcal O))}^2 +
    \nu \nm{u}_{L^2(0,T;L^2(\mathcal O))}^2
  \Big),
\end{equation}
\end{small}
subject to the state equation
\begin{equation}
  \label{eq:state}
  \begin{cases}
    \mathrm{d}y(t) =
    ( \Delta y+u )(t) \, \mathrm{d}t + y(t) \mathrm{d}W(t),
    \quad 0 \leqslant t \leqslant T, \\
    y(0) = 0,
  \end{cases}
\end{equation}
where $ y_d \in L_\mathbb F^2(\Omega; L^2(0,T; L^2(\mathcal O))) $ is given. The
above
admissible space $ U_\text{ad} $ is defined by
\begin{align*}
  U_\text{ad} &:= \big\{
    v \in L_\mathbb F^2(\Omega;L^2(0,T;L^2(\mathcal O))) \mid
    u_* \leqslant v \leqslant u^* \,\,
    \mathbb P \otimes \, \mathrm{d}t \otimes \, \mathrm{d}x
    \, \text{a.e.}
  \big\},
\end{align*}
where $ -\infty < u_* < u^* < \infty $ and $ \mathrm{d}t $ and $ \mathrm{d}x $
are respectively the Lebesgue measures in $ (0,T) $ and $ \mathcal O $.





There has been a vast amount of literature on the stochastic optimal control
theory, and by now it is still a very active research area. In this area, the
optimal control of the stochastic parabolic equations has been extensively
studied; see \cite{Bensoussan1983,Bensoussan1975,Du2013,Fuhrman2013,Hu1990,
Lu2014,Zhou1993} and the references therein. However, the numerical analysis of
these problems is quite rare. So far, in the literature we are aware of only one
paper addressing this issue. Dunst and Prohl~\cite{Dunst2016} analyzed a spatial
semi-discretization of an optimal control problem governed by a stochastic heat
equation with multiplicative noise. To our best knowledge, no convergence rate
is available for a full discretization of problem \cref{eq:model}.

The numerical analysis of problem \cref{eq:model} consists of two key
ingredients: the numerical analysis of a forward stochastic parabolic equation,
namely the state equation \cref{eq:state}; the numerical analysis of a backward
stochastic parabolic equation
\begin{equation} 
  \label{eq:co-state}
  \begin{cases}
    \mathrm{d}p(t) = -(\Delta p + y - y_d + z)(t) \, \mathrm{d}t +
    z(t) \, \mathrm{d}W(t), \quad 0 \leqslant t \leqslant T, \\
    p(T) = 0,
  \end{cases}
\end{equation}
namely the adjoint equation of problem \cref{eq:model}. There has already been a
considerable number of papers on the numerical analysis of the forward
stochastic parabolic equations; see \cite{Barbu2017, Barth2012, Cui_Hong_2019,
Jentzen2015, Kruse2014, Yan2005} and the references therein. There has also been
many papers on the numerical analysis of the backward stochastic differential
equations; see \cite{Bouchard2004, Crisan2012,
Hu_Dualart_2011,peng_xu_2011,Wang_Zhang_2011,Zhang2004,Zhao2006,Zhao2014,
Zhao2017} and the references therein. However, the numerical analysis of the
backward stochastic parabolic equations is rather limited.
Wang~\cite{WangY2016,Wang2020} analyzed a time-discretized Galerkin
approximation of a semilinear backward stochastic parabolic equation and a
time-discretization Galerkin approximation of a linear backward stochastic
parabolic equation. Recently, Li and Tang~\cite{Li_Tang_2021} developed a
splitting-up method for backward stochastic parabolic equations, where no
convergence rate was derived for the general nonlinear case.

In this paper, we consider the convergence of a discrete stochastic optimal
control problem. The state equation is discretized by the continuous piecewise
linear element method in space and the backward Euler scheme in time. The
stability and convergence of the discrete state equation can be easily derived
by the standard techniques. The main challenge in the numerical analysis is that
the process $ z $ in \cref{eq:co-state} is of low temporal regularity. Although
it appears that the numerical analysis in \cite{Wang2020} may be applied to
equation \cref{eq:co-state} under the condition that $ y_d \in L^2(0, T;
H_0^1(\mathcal O)) $ and
\[
  \nm{y_d(t) - y_d(s)}_{L^2(\mathcal O)} \leqslant
  C \snm{t-s}^{1/2} \quad
  \text{for all } 0 \leqslant s \leqslant t \leqslant T,
\]
throughout this paper we require only that $ y_d \in L_\mathbb
F^2(\Omega;L^2(0,T;L^2(\mathcal O))) $. To tackle the low temporal regularity of
the process $ z $, in the numerical analysis we propose a special discretization
of \cref{eq:co-state}. In this discretization the process $ z $ is only
discretized in space so that the convergence rate $ O(\tau^{1/2} + h^2) $ and
the stability of this discretization are derived. Finally, by the theoretical
results of this discretization, we are able to derive the following error
estimate:
\[
  \nm{y-Y}_{L^2(\Omega;L^2(0,T;L^2(\mathcal O)))} +
  \nm{u-U}_{L^2(\Omega;L^2(0,T;L^2(\mathcal O)))}
  \leqslant C(\tau^{1/2} + h^2),
\]
where $ U $ and $ Y $ are the numerical control and state, respectively.

The rest of this paper is organized as follows. \cref{sec:pre} introduces some
notations and the first order optimality condition of problem \cref{eq:model}.
\cref{sec:discr_pro} introduces a discrete stochastic optimal control problem
and presents its error estimate. Finally, \cref{sec:proofs} proves the error
estimate in the previous section.

\section{Preliminaries}
\label{sec:pre}
\subsection{Notations}
For two Banach spaces $ \mathcal B_1 $ and $ \mathcal B_2 $, $ \mathcal
L(\mathcal B_1, \mathcal B_2) $ denotes the set of all bounded linear operators
from $ \mathcal B_1 $ to $ \mathcal B_2 $; in particular, $ \mathcal L(\mathcal
B_1, \mathcal B_1) $ is abbreviated to $ \mathcal L(\mathcal B_1) $. The
identity mapping is denoted by $ I $. For any random variable $ v $ defined on $
(\Omega, \mathcal F, \mathbb P) $, $ \mathbb
E_t v $ means the expectation of $ v $ with respect to $ \mathcal F_t $ for each $ t
\geqslant 0 $. For any separable Banach space $ X $, define
\begin{small}
\begin{align*}
  L_{\mathbb F}^2(\Omega;L^2(0,T;X)) &:= \big\{
    \varphi: (0,T) \times \Omega\to X \mid\,
    \text{$ \varphi $ is $ \mathbb F $-progressively measurable} \\
    & \qquad\qquad\qquad\qquad\qquad\qquad \text{and } \int_0^T
    \mathbb E \nm{\varphi(t)}_X^2
    \, \mathrm{d}t < \infty
  \big\}.
\end{align*}
\end{small}

Recall that $ \Delta $ is the realization of the Laplace operator with
homogeneous Dirichlet boundary condition in $ L^2(\mathcal O) $. For any $ 0
\leqslant \beta < \infty $, define
\[ 
  \dot H^\beta(\mathcal O) := \left\{
    (-\Delta)^{-\beta/2}v: v \in L^2(\mathcal O)
  \right\}
\]
and endow this space with the norm
\[ 
  \nm{v}_{\dot H^\beta(\mathcal O)} :=
  \nm{(-\Delta)^{\beta/2} v}_{L^2(\mathcal O)}
  \quad \forall v \in \dot H^\beta(\mathcal O).
\]
We use $ \dot H^{-\beta}(\mathcal O) $ to denote the dual space of $ \dot
H^\beta(\mathcal O) $ for each $ \beta > 0 $. For any $ v \in L^2(\Omega;\dot
H^{-1}(\mathcal O)) $ and $ w \in L^2(\Omega;\dot H^1(\mathcal O)) $, define
\[ 
  [v,w] := \mathbb E \dual{v, w}_{\dot H^{-1}(\mathcal O), \dot H^1(\mathcal O)}
\]
where $ \dual{\cdot,\cdot}_{\dot H^{-1}(\mathcal O), \dot H^1(\mathcal O)} $
means the duality paring between $ \dot H^{-1}(\mathcal O) $ and $ \dot
H^1(\mathcal O) $. In particular, for any $ v, w \in L^2(\Omega;L^2(\mathcal O))
$,
\[
  [v,w] = \mathbb E \int_\mathcal O vw.
\]

\subsection{First order optimality condition}
For any $ g \in L_\mathbb F^2(\Omega;L^2(0,T;L^2(\mathcal O))) $, the following
forward stochastic parabolic equation
\begin{equation} 
  \label{eq:forward}
  \begin{cases}
    \mathrm{d}y(t) = (\Delta y+ g)(t) \, \mathrm{d}t +
    y(t) \, \mathrm{d}W(t) \quad \forall t \in [0,T] \\
    y(0) = 0
  \end{cases}
\end{equation}
admits a unique strong solution
\[ 
  y \in L_\mathbb F^2\big(
    \Omega; C([0,T];\dot H^1(\mathcal O))
    \cap L^2(0,T;\dot H^2(\mathcal O))
  \big).
\]
We summarize some standard results of $ y $ as follows: for any $ 0 \leqslant s
\leqslant
t \leqslant T $,
\begin{equation}
  \label{eq:S0-mild-form}
  \begin{aligned}
    & y(t) - e^{(t-s)\Delta}y(s) \\
    ={} &
    \int_{s}^{t} e^{(t-r)\Delta} g(r) \, \mathrm{d}r +
    \int_{s}^{t} e^{(t-r)\Delta} y(r) \, \mathrm{d}W(r),
  \end{aligned}
\end{equation}
and, in particular,
\begin{equation}
  \label{eq:S0-mild-form-2}
  y(t) = \int_0^t e^{(t-r)\Delta} g(r) \, \mathrm{d}r +
  \int_0^t e^{(t-r)\Delta}y(r) \, \mathrm{d}W(r);
\end{equation}
for any $ g \in L_\mathbb F^2(\Omega;L^2(0,T;\dot H^\beta(\mathcal O))) $ with $
\beta \geqslant 0 $,
\begin{equation}
  \label{eq:S0-regu}
  \begin{aligned}
    & \nm{y}_{
      L^2(\Omega;C([0,T];\dot H^{\beta+1}(\mathcal O)))
    } +
    \nm{y}_{
      L^2(\Omega;L^2(0,T;\dot H^{\beta+2}(\mathcal O)))
    } \\
    \leqslant{} &
    C \nm{g}_{
      L^2(\Omega;L^2(0,T;\dot H^\beta(\mathcal O)))
    },
  \end{aligned}
\end{equation}
where $ C $ is a positive constant depending only on $ T $. In the sequel, we
will use $ S_0g $ to denote the above $ y $.

For any $ g \in L_{\mathbb F}^2(\Omega;L^2(0,T;L^2(\mathcal O))) $, the backward
stochastic parabolic equation
\begin{equation}
  \label{eq:backward}
  \begin{cases}
    \mathrm{d}p(t) = -(\Delta p + g + z)(t) \, \mathrm{d}t +
    z(t) \, \mathrm{d}W(t) \quad \forall t \in [0,T] \\
    p(T) = 0
  \end{cases}
\end{equation}
admits a unique strong solution
\begin{align*} 
  (p,z) & \in L_{\mathbb F}^2\big(
    \Omega;C([0,T];\dot H^1(\mathcal O)) \cap L^2(0,T;\dot H^2(\mathcal O))
  \big) \\
  & \qquad {} \times L_\mathbb F^2(\Omega;L^2(0,T;\dot H^1(\mathcal O))).
\end{align*}
We summarize some standard results of $ (p,z) $ as follows: for any $ 0
\leqslant s \leqslant t \leqslant T $,
\begin{equation} 
  \label{eq:S1-S2-mild-form}
  \begin{aligned}
    & p(s) - e^{(t-s)\Delta}p(t) \\
    ={} &
    \int_s^t e^{(r-s)\Delta} (g+z)(r) \, \mathrm{d}r -
    \int_s^t e^{(r-s)\Delta} z(r) \, \mathrm{d}W(r);
  \end{aligned}
\end{equation}
for any $ g \in L_\mathbb F^2(\Omega;L^2(0,T;\dot H^\beta(\mathcal O))) $ with $
0 \leqslant \beta < \infty $,
\begin{equation} 
  \label{eq:S1-S2-regu}
  \begin{aligned}
    \nm{p}_{
      L^2(\Omega;C([0,T];\dot H^{\beta+1}(\mathcal O)))
    } + \nm{p}_{
      L^2(\Omega;L^2(0,T;\dot H^{\beta+2}(\mathcal O)))
    } \\
    {} + \nm{z}_{
      L^2(\Omega;L^2(0,T;\dot H^{\beta+1}(\mathcal O)))
    } \leqslant C \nm{g}_{
      L^2(\Omega;L^2(0,T;\dot H^\beta(\mathcal O)))
    },
  \end{aligned}
\end{equation}
where $ C $ is a positive constant depending only on $ T $. In the sequel, we
will use $ S_1g $ and $ S_2g $ to denote the above $ p $ and $ z $,
respectively.

\begin{remark} 
  The above properties of $ S_0 $ and $ S_1 $ are easily derived by the standard
  theory of stochastic differential equations (cf.~\cite[Chapters 2, 3 and
  5]{Pardoux2014}); see also \cite[Theorem 3.1]{Du2012}.
\end{remark}

By \cref{eq:forward,eq:backward}, applying the famous It\^o's formula yields
\[ 
  \int_0^T [S_0f, g] \, \mathrm{d}t =
  \int_0^T [f, S_1g] \, \mathrm{d}t
\]
for all $ f, g \in L_{\mathbb F}^2(\Omega;L^2(0,T;L^2(\mathcal O))) $. Hence, a
routine calculation gives the following first order optimality condition of
problem \cref{eq:model}.
\begin{theorem} 
  \label{thm:optim}
  Problem \cref{eq:model} admits a unique solution $ u \in U_\text{ad} $,
  and the following first order optimality condition holds:
  \begin{subequations}
  \begin{numcases}{}
    y = S_0 u, \label{eq:optim-a} \\
    p = S_1(y-y_d), \label{eq:optim-b} \\
    \int_0^T [p+\nu u, v-u] \, \mathrm{d}t
    \geqslant 0 \quad \text{for all $ v \in U_\text{ad} $.}
    \label{eq:optim-c}
  \end{numcases}
  \end{subequations}
\end{theorem}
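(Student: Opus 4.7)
My approach is to reduce \cref{eq:model} to an unconstrained-in-$y$ strictly convex minimization in $u$ alone, apply the standard Hilbert-space optimization theory on a closed convex set, and then read off the first-order condition via the adjoint identity displayed just before the theorem.

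First, observe that for every $u \in U_\text{ad}$ the state equation \cref{eq:state} has a unique strong solution $y = S_0 u$ in $L_\mathbb F^2(\Omega;L^2(0,T;L^2(\mathcal O)))$ by the linear forward theory, so the constraint is implicitly solved. Substituting $y = S_0 u$ into $J$, the reduced cost
\[
  \widehat J(u) := \tfrac12 \mathbb E\!\left(
    \nm{S_0 u - y_d}_{L^2(0,T;L^2(\mathcal O))}^2 +
    \nu \nm{u}_{L^2(0,T;L^2(\mathcal O))}^2
  \right)
\]
is a continuous, strictly convex, coercive quadratic functional on the Hilbert space $L_\mathbb F^2(\Omega;L^2(0,T;L^2(\mathcal O)))$ (coercivity and strict convexity follow from $\nu>0$, and continuity from the linear bound \cref{eq:S0-regu} for $S_0$). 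Since $U_\text{ad}$ is nonempty, closed, and convex in this Hilbert space (it is defined by a pointwise two-sided bound preserved under $L^2$-limits up to a subsequence), the classical direct method yields a unique minimizer $u \in U_\text{ad}$.

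Next I would derive the variational inequality. For any $v \in U_\text{ad}$ and any $\lambda \in (0,1]$, convexity of $U_\text{ad}$ gives $u + \lambda(v-u) \in U_\text{ad}$; expanding $\widehat J(u+\lambda(v-u)) \geqslant \widehat J(u)$, dividing by $\lambda$, and letting $\lambda \to 0^+$ yields
\[
  \int_0^T [S_0 u - y_d,\; S_0(v-u)] \, \mathrm dt +
  \nu \int_0^T [u,\; v-u] \, \mathrm dt \geqslant 0.
\]
Now I invoke the duality identity displayed right before the theorem,
\[
  \int_0^T [S_0 f, g] \, \mathrm dt = \int_0^T [f, S_1 g] \, \mathrm dt,
\]
with $f = v-u$ and $g = S_0 u - y_d = y - y_d$, which turns the first integral into $\int_0^T [S_1(y-y_d),\, v-u]\,\mathrm dt = \int_0^T [p,\, v-u]\,\mathrm dt$ with $p = S_1(y-y_d)$, i.e.\ \cref{eq:optim-b}. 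Combining, \cref{eq:optim-c} follows, while \cref{eq:optim-a} is just the definition $y = S_0 u$.

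The steps are all routine given the linear theory of $S_0, S_1$ and the already-established Itô duality; there is no genuine obstacle. The only mildly delicate point is checking that $U_\text{ad}$ is actually closed in $L_\mathbb F^2(\Omega;L^2(0,T;L^2(\mathcal O)))$, which I would handle by extracting from any $L^2$-convergent sequence a subsequence converging $\mathbb P\otimes\mathrm dt\otimes\mathrm dx$ almost everywhere so that the pointwise bounds $u_* \leqslant v \leqslant u^*$ are preserved, together with the fact that progressive measurability is stable under pointwise a.e.\ limits.
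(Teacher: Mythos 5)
Your proposal is correct and follows essentially the route the paper intends: the paper itself only sketches this proof, stating the It\^o duality identity $\int_0^T [S_0f,g]\,\mathrm{d}t = \int_0^T [f,S_1g]\,\mathrm{d}t$ and then asserting that ``a routine calculation'' gives the optimality system, and your reduction to a strictly convex problem over the closed convex set $U_\text{ad}$, the Gateaux-derivative variational inequality, and the substitution $p = S_1(y-y_d)$ via that identity is exactly that routine calculation spelled out. No discrepancy with the paper's approach.
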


\begin{remark} 
  For the proof of \cref{thm:optim}, we also refer the reader to
  \cite[Theorem~3.1]{Bensoussan1983} and \cite[Theorem~6.2]{Lu2016}.
\end{remark}

\section{Discretization}
\label{sec:discr_pro}
Let $ J > 0 $ be an integer and define $ \tau := T/J $. For each $ 0 \leqslant j
\leqslant J $, define $ t_j := j\tau $. We also set $ \delta W_j := W(t_{j+1}) -
W(t_j) $ for each $ 0 \leqslant j < J $. Let $ \mathcal K_h $ be a conventional
conforming, shape regular and quasi-uniform triangulation of $ \mathcal O $
consisting of $ d $-simplexes, and we use $ h $ to denote the maximum diameter
of the elements in $ \mathcal K_h $. Define
\begin{align*} 
  \mathcal V_h &:= \big\{
    v_h \in \dot H^1(\mathcal O) \mid\,
    v_h \text{ is piecewise linear on each }
    K \in \mathcal K_h
  \big\}, \\
  \mathcal X_{h,\tau} &:= \big\{
    V \in L_\mathbb F^2(\Omega;L^2(0,T;\mathcal V_h)) \mid\,
    \text{$V$ is constant on } [t_j,t_{j+1})
    \,\, \forall 0 \leqslant j < J
  \big\}.
\end{align*}
Let $ Q_h $ be the $ L^2(\mathcal O) $-orthogonal projection operator onto $
\mathcal V_h $, and define the discrete Laplace operator $ \Delta_h: \mathcal
V_h \to \mathcal V_h $ by
\[
  \int_{\mathcal O} (\Delta_h v_h) w_h =
  - \int_{\mathcal O} \nabla v_h \cdot \nabla w_h
\]
for all $ v_h, w_h \in \mathcal V_h $.

For any $ g \in L_\mathbb F^2(\Omega;L^2(0,T;L^2(\mathcal O))) $, define $
\mathcal S_0 g \in \mathcal
X_{h,\tau} $ by
\begin{small}
\begin{equation}
  \label{eq:calS0}
  \begin{cases}
    (\mathcal S_0 g)_{j\!+\!1} - (\mathcal S_0 g)_j =
    \tau \Delta_h (\mathcal S_0g)_{j\!+\!1} +
    \int_{t_j}^{t_{j\!+\!1}} \! Q_h g(t) \mathrm{d}t \!+\!
    (\mathcal S_0 g)_j \delta W_j, \, 0 \leqslant j < J, \\
    (\mathcal S_0g)_0 = 0.
  \end{cases}
\end{equation}
\end{small}
Here and in what follows, for any $ v \in L_\mathbb
F^2(\Omega;L^2(0,T;L^2(\mathcal O))) $ we use $ v_j $ to abbreviate $ v(t_j) $
for all $ 0 \leqslant j \leqslant J $.


The discretization of problem \cref{eq:model} is
\begin{equation}
  \label{eq:discretization}
  \min_{U \in U_\text{ad}^{h,\tau}}
  J_{h,\tau}(U) := \frac12 \mathbb E \big(
    \nm{\mathcal S_0 U - y_d}_{
      L^2(0,T;L^2(\mathcal O))
    }^2 +
    \nu\nm{U}_{L^2(0,T;L^2(\mathcal O))}^2
  \big),
\end{equation}
where
\[ 
  U_\text{ad}^{h,\tau} := \left\{
    V \in U_\text{ad} \mid
    \text{$V$ is constant on $ [t_j,t_{j+1}) $ for each $ 0 \leqslant j < J$}
  \right\}.
\]

The main result of this paper is the following error estimate.
\begin{theorem} 
  \label{thm:conv}
  Assume that $ y_d \in L_\mathbb F^2(\Omega;L^2(0,T;L^2(\mathcal O))) $. Let $
  u $ be the solution of problem \cref{eq:model}, and let $ U $ be the solution
  of problem \cref{eq:discretization}. Then
  \begin{equation}
    \label{eq:conv}
    \nm{S_0u \!-\! \mathcal S_0U}_{
      L^2(\Omega;L^2(0,T;L^2(\mathcal O)))
    } + \nm{u\!-\!U}_{
      L^2(\Omega;L^2(0,T;L^2(\mathcal O)))
    } \leqslant C(\tau^{1/2} + h^2),
  \end{equation}
  where $ C $ is a positive constant depending only on $ \nu $, $ u_* $, $ u^*
  $, $ y_d $, $ T $, $ \mathcal O $ and the regularity parameters of $ \mathcal
  V_h $.
\end{theorem}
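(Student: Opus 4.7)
The plan is to set up a discrete first-order optimality system analogous to \cref{thm:optim} and compare the continuous and discrete variational inequalities; the comparison will reduce the convergence analysis to two discretization-error estimates, one for the forward operator $S_0$ and one for a suitable discrete analogue of the backward operator $S_1$. All norms below are taken in $L_{\mathbb F}^2(\Omega;L^2(0,T;L^2(\mathcal O)))$ unless otherwise indicated. I would first introduce a discrete adjoint $\mathcal S_1$ corresponding to the special backward discretization of \cref{eq:backward} advertised in the introduction (backward Euler plus continuous piecewise linear elements for $p$, while $z$ is discretized only in space), arranged so that the discrete duality
\begin{equation*}
\int_0^T [\mathcal S_0 f, g]\,\mathrm{d}t = \int_0^T [f, \mathcal S_1 g]\,\mathrm{d}t
\end{equation*}
holds. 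Strict convexity of $J_{h,\tau}$ then yields existence, uniqueness and the discrete first-order optimality system
\begin{equation*}
Y = \mathcal S_0 U,\quad P = \mathcal S_1(Y - y_d),\quad \int_0^T [P + \nu U, V - U]\,\mathrm{d}t \geqslant 0 \quad \forall V \in U_\text{ad}^{h,\tau}.
\end{equation*}

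Next I would test \cref{eq:optim-c} with $v = U \in U_\text{ad}^{h,\tau} \subset U_\text{ad}$ and the discrete variational inequality with $V = \pi_\tau u$, where $\pi_\tau$ denotes the adapted $L^2$-in-time projection onto piecewise-constant functions on $\{t_j\}$ (conditional expectation composed with time averaging), which preserves the pointwise box constraint so that $\pi_\tau u \in U_\text{ad}^{h,\tau}$. Adding the two inequalities and rearranging yields
\begin{equation*}
\nu\nm{u - U}^2 \leqslant \int_0^T [p - P, U - u]\,\mathrm{d}t + \int_0^T [P + \nu U, \pi_\tau u - u]\,\mathrm{d}t.
\end{equation*}
The second integral vanishes: $P$ and $U$ are piecewise constant in time and $\mathcal F_{t_j}$-measurable on $[t_j,t_{j+1})$, while $\pi_\tau u - u$ has zero $\mathbb E_{t_j}$-conditional integral on each such interval. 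For the first, writing $p - P = (S_1 - \mathcal S_1)(y - y_d) + \mathcal S_1(y - Y)$ and $y - Y = (S_0 - \mathcal S_0)u + \mathcal S_0(u - U)$, the discrete duality gives
\begin{equation*}
\int_0^T [\mathcal S_1(y - Y), U - u]\,\mathrm{d}t = \int_0^T [\mathcal S_0(U - u), (S_0 - \mathcal S_0)u]\,\mathrm{d}t - \nm{\mathcal S_0(u - U)}^2,
\end{equation*}
so Young's inequality and absorption yield
\begin{equation*}
\nm{u - U}^2 + \nm{\mathcal S_0(u - U)}^2 \leqslant C\big(\nm{(S_0 - \mathcal S_0)u}^2 + \nm{(S_1 - \mathcal S_1)(y - y_d)}^2\big),
\end{equation*}
and the state error follows from $\nm{S_0 u - \mathcal S_0 U} \leqslant \nm{(S_0 - \mathcal S_0)u} + \nm{\mathcal S_0(u - U)}$.

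It then remains to prove the two discretization-error estimates $\nm{(S_0 - \mathcal S_0)u} + \nm{(S_1 - \mathcal S_1)(y - y_d)} \leqslant C(\tau^{1/2} + h^2)$, and this is where the real analytic work lies. The forward estimate is essentially routine: a mild-form analysis based on \cref{eq:S0-mild-form-2} combined with backward Euler and Galerkin projection yields the rate by standard semigroup and finite-element arguments. The backward estimate is the principal obstacle, because the only hypothesis on $y_d$ gives $y - y_d \in L_\mathbb F^2(\Omega;L^2(0,T;L^2(\mathcal O)))$ and hence the $z$-component of $S_2(y - y_d)$ has no better than $L^2$ temporal regularity; tricks that control time-jump terms by $\partial_t z$ are therefore unavailable. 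The whole point of leaving $z$ only spatially discretized in the definition of $\mathcal S_1$ is that temporal errors can then be transferred entirely onto the $p$-component, which by \cref{eq:S1-S2-regu} enjoys the temporal regularity needed to support the $\tau^{1/2}$ rate, while $h^2$ comes from the usual continuous piecewise linear elliptic consistency. Combined with an $L^2$-stability estimate for $\mathcal S_1$, this yields the required bound.
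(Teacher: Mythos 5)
Your overall architecture --- compare the continuous and discrete variational inequalities, insert the conditional-expectation/time-averaging projection $\pi_\tau u$ of the optimal control into $U_\text{ad}^{h,\tau}$, and reduce everything to convergence estimates for the discretizations of $S_0$ and $S_1$ (with $z$ discretized only in space to cope with its poor temporal regularity) --- is the paper's. But there is a genuine gap at the pivot of your argument: the asserted discrete duality
\[
  \int_0^T [\mathcal S_0 f, g]\,\mathrm{d}t = \int_0^T [f, \mathcal S_1 g]\,\mathrm{d}t
\]
does not hold for the discretization you (and the paper) describe, and it cannot simply be ``arranged''. What the paper proves instead (\cref{lem:S0f-g}) is
\[
  \int_0^T [\mathcal S_0 f, g]\,\mathrm{d}t
  = \sum_{j=0}^{J-1}\Big(\Big[\int_{t_j}^{t_{j+1}} f\,\mathrm{d}t,\,(\mathcal S_1 g)_{j+1}\Big]
  - \Big[(\mathcal S_0 f)_j\,\delta W_j,\ \int_{t_j}^{t_{j+1}}(\mathcal S_2 g+g)(t)\,\mathrm{d}t\Big]\Big),
\]
and the second sum is genuinely nonzero: $(\mathcal S_0 f)_j$ is $\mathcal F_{t_j}$-measurable, but $\int_{t_j}^{t_{j+1}}(\mathcal S_2 g+g)(t)\,\mathrm{d}t$ is correlated with $\delta W_j$, so $\mathbb E_{t_j}\big(\delta W_j\int_{t_j}^{t_{j+1}}(\mathcal S_2 g+g)(t)\,\mathrm{d}t\big)$ does not vanish. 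Consequently your discrete first-order optimality system is wrong as stated (the paper's version, \cref{eq:discr-optim}, carries the extra term $\mathcal A_0$), and the exact cancellation you perform when applying the duality to $\int_0^T[\mathcal S_1(y-Y),U-u]\,\mathrm{d}t$ is not available. If you instead define $\mathcal S_1$ as the exact adjoint of $\mathcal S_0$ so that the clean duality holds by construction, that operator is no longer the backward-Euler/spatially-semidiscrete scheme you describe, and the estimate $\nm{(S_1-\mathcal S_1)(y-y_d)}\lesssim\tau^{1/2}+h^2$ would have to be re-derived for it from scratch.

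The defect is at least of the right order: the paper shows (\cref{lem:ljm}) that the stray noise term is bounded by $\tau^{1/2}\nm{f}\nm{g}$, and the index shift $(\mathcal S_1 g)_{j+1}$ versus $(\mathcal S_1 g)_j$ costs another $O(\tau^{1/2})$ via a discrete temporal-increment bound for $\mathcal S_1 g$ (the remark following \cref{lem:S1-conv}). But estimating the noise term requires the $L^2$-in-time stability of $\mathcal S_2$ (\cref{lem:S2-stab}), which is itself one of the nontrivial results of the paper --- it is proved by comparison with a spatially semidiscrete backward SPDE, not by an energy argument alone. So to repair your proof you must (i) state the duality with its correction terms, (ii) carry those terms through the variational-inequality comparison, where they produce contributions like $\mathbb I_2$ in the paper's proof that must be absorbed by Young's inequality, and (iii) prove the stability of $\mathcal S_2$ and the discrete temporal regularity of $\mathcal S_1$. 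As written, the proposal silently assumes all of this away.
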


\begin{remark} 
  Note that $ U_\text{ad}^{h,\tau} $ is not discretized in space, and this idea
  follows from \cite{Hinze2005}.
\end{remark}

\section{Proofs}
\label{sec:proofs}
For convenience, this section uses the following conventions: $ J > 2 $ and $
\tau < 2/5 $; $ a \lesssim b $ means that there exists a positive constant $ C
$, depending only on $ T $, $ \mathcal O $ and the regularity parameters of $
\mathcal V_h $, such that $ a \leqslant C b $. In addition, since they are
frequently used, we will use the following three properties implicitly: for any
$ f, g \in L_\mathbb F^2(\Omega;L^2(0,T,X)) $, where $ X $ is a Hilbert space
with inner product $ (\cdot,\cdot)_X $, it holds that (cf.~\cite[Theorem~7.1 and
Remark~7.1] {Baldi2017})
\begin{align*} 
  & \mathbb E \int_a^b f(t) \, \mathrm{d}W(t) =
  \mathbb E_a \int_a^b f(t) \, \mathrm{d}W(t) = 0, \\
  & \Nm{
    \int_a^b f(t) \, \mathrm{d}W(t)
  }_{L^2(\Omega;X)} = \nm{f}_{L^2(\Omega;L^2(a,b;X))}, \\
  & \mathbb E\Big(
    \int_a^b f(t) \, \mathrm{d}W(t), \,
    \int_a^b g(t) \, \mathrm{d}W(t)
  \Big)_X = \int_a^b \mathbb E \big( f(t), \, g(t) \big)_X \, \mathrm{d}t,
\end{align*}
where $ 0 \leqslant a < b \leqslant T $.

The main task of this section is to prove \cref{thm:conv}. We outline the
procedure as follows. In \cref{ssec:pre} we introduce some standard estimates.
In \cref{ssec:S0} we derive the stability and convergence of $ \mathcal S_0 $.
In \cref{ssec:sbde} we analyze a discretization of a backward stochastic
parabolic equation. In \cref{ssec:S1-S2} we introduce a discretization of the
adjoint equation of problem \cref{eq:model}, and, based on the theoretical
results in \cref{ssec:sbde}, we establish the stability and convergence of this
discretization. Finally, by the theoretical results in
\cref{ssec:S0,ssec:S1-S2}, we are able to conclude the proof of \cref{thm:conv}
in \cref{ssec:proof-conv}.

\subsection{Preliminary estimates}
\label{ssec:pre}
In this subsection, we summarize some standard estimates. For any $ \beta \in
\mathbb R $, let $ \dot H_h^\beta(\mathcal O) $ be the space of $ \mathcal V_h $
endowed with the norm
\[
  \nm{v_h}_{\dot H_h^\beta(\mathcal O)} :=
  \nm{(-\Delta_h)^{\beta/2} v_h}_{L^2(\mathcal O)}
  \quad \forall v_h \in \mathcal V_h.
\]
For any $ v_h \in \mathcal V_h $ and $ -1 \leqslant \beta \leqslant 1 $, we have
\begin{equation}
  \label{eq:H-Hh-equiv}
  \nm{v_h}_{\dot H^\beta(\mathcal O)} \lesssim
  \nm{v_h}_{\dot H_h^\beta(\mathcal O)} \lesssim
  \nm{v_h}_{\dot H^\beta(\mathcal O)}.
\end{equation}

\begin{lemma}
  \label{lem:e^tDelta}
  For any $ t > 0 $, we have
  \begin{align}
    \nm{e^{t\Delta}}_{\mathcal L(L^2(\mathcal O))}
    \leqslant 1, \label{eq:e^tDelta} \\
    \nm{I-e^{t\Delta}}_{
      \mathcal L(\dot H^2(\mathcal O), L^2(\mathcal O))
    }  \leqslant t, \label{eq:I-e^Delta-h2} \\
    \nm{I-e^{t\Delta}}_{
      \mathcal L(\dot H^1(\mathcal O), L^2(\mathcal O))
    }  \lesssim t^{1/2}.
    \label{eq:I-e^Delta-h1}
  \end{align}
\end{lemma}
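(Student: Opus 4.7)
The plan is to prove all three bounds by spectral decomposition of $-\Delta$. Since $-\Delta$ with homogeneous Dirichlet boundary condition on the convex polytope $\mathcal O$ is a self-adjoint positive operator with compact resolvent, there is an $L^2(\mathcal O)$-orthonormal basis of eigenfunctions $\{\phi_k\}_{k\geqslant 1}$ with eigenvalues $0 < \lambda_1 \leqslant \lambda_2 \leqslant \cdots \to \infty$. For any $v \in L^2(\mathcal O)$ with expansion $v = \sum_k c_k \phi_k$, one has $e^{t\Delta} v = \sum_k e^{-t\lambda_k} c_k \phi_k$, while $\nm{v}_{\dot H^\beta(\mathcal O)}^2 = \sum_k \lambda_k^\beta c_k^2$ for any $\beta \geqslant 0$. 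The proofs then reduce to pointwise inequalities for the multipliers $e^{-t\lambda_k}$ and $1 - e^{-t\lambda_k}$.

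First I would establish \cref{eq:e^tDelta}: since $e^{-t\lambda_k} \leqslant 1$ for each $k$, Parseval's identity immediately yields $\nm{e^{t\Delta}v}_{L^2(\mathcal O)}^2 = \sum_k e^{-2t\lambda_k} c_k^2 \leqslant \sum_k c_k^2 = \nm{v}_{L^2(\mathcal O)}^2$. Next, for \cref{eq:I-e^Delta-h2}, I would use the elementary inequality $1 - e^{-s} \leqslant s$ for $s \geqslant 0$ with $s = t\lambda_k$, giving $(1 - e^{-t\lambda_k})^2 \leqslant t^2 \lambda_k^2$; multiplying by $c_k^2$ and summing produces
\[
  \nm{(I - e^{t\Delta})v}_{L^2(\mathcal O)}^2
  = \sum_k (1 - e^{-t\lambda_k})^2 c_k^2
  \leqslant t^2 \sum_k \lambda_k^2 c_k^2
  = t^2 \nm{v}_{\dot H^2(\mathcal O)}^2.
\]

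For \cref{eq:I-e^Delta-h1} the multiplier estimate needs $(1 - e^{-s})^2 \lesssim s$ for all $s \geqslant 0$; I would split into $s \leqslant 1$ (where $(1-e^{-s})^2 \leqslant s^2 \leqslant s$) and $s \geqslant 1$ (where $(1-e^{-s})^2 \leqslant 1 \leqslant s$), then apply it with $s = t\lambda_k$ and sum to get $\nm{(I - e^{t\Delta})v}_{L^2(\mathcal O)}^2 \lesssim t \sum_k \lambda_k c_k^2 = t \nm{v}_{\dot H^1(\mathcal O)}^2$.

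None of the three steps contains a genuine obstacle; they are standard spectral calculus. The only point requiring a hint of care is the estimate $(1-e^{-s})^2 \lesssim s$ in \cref{eq:I-e^Delta-h1}, where one must avoid the naive bound $(1-e^{-s})^2 \leqslant s^2$ (which is not integrable at infinity in $s$) and instead exploit the $L^\infty$ bound $1 - e^{-s} \leqslant 1$ for large $s$. Alternatively, one could derive \cref{eq:I-e^Delta-h1} from \cref{eq:e^tDelta,eq:I-e^Delta-h2} by interpolation between $\dot H^0(\mathcal O)$ and $\dot H^2(\mathcal O)$, which gives exactly the half-power of $t$.
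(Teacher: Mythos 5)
Your proof is correct, but it takes a different route from the paper. You work throughout with the spectral decomposition of $-\Delta$ and reduce each bound to a pointwise inequality for the multipliers $e^{-t\lambda}$ and $1-e^{-t\lambda}$; in particular you prove \cref{eq:I-e^Delta-h1} directly from the elementary bound $(1-e^{-s})^2\leqslant s$ (splitting $s\leqslant 1$ and $s\geqslant 1$). The paper instead argues at the operator level: it obtains \cref{eq:I-e^Delta-h2} from the identity $e^{t\Delta}-I=\Delta\int_0^t e^{s\Delta}\,\mathrm{d}s$ together with the contraction property \cref{eq:e^tDelta}, and then derives \cref{eq:I-e^Delta-h1} by interpolating between \cref{eq:I-e^Delta-h2} and the trivial bound $\nm{I-e^{t\Delta}}_{\mathcal L(L^2(\mathcal O))}\leqslant 2$, citing interpolation theory. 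The two arguments are equivalent in substance (for this self-adjoint operator the interpolation step is itself just the spectral inequality you prove by hand), but yours is more self-contained --- it avoids the appeal to abstract interpolation and yields an explicit constant $1$ in \cref{eq:I-e^Delta-h1} --- whereas the paper's operator-level argument is the one that transfers verbatim to the discrete semigroup $e^{t\Delta_h}$ in \cref{lem:e^tDeltah} and to non-self-adjoint settings. You correctly flag the one real pitfall (the naive bound $(1-e^{-s})^2\leqslant s^2$ is useless for large $s$), and you also note the interpolation alternative, which is exactly the paper's choice.
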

\begin{proof}
  Inequality \cref{eq:e^tDelta} is standard. Since
  \[
    e^{t\Delta} - I = \Delta \int_0^t e^{s\Delta} \, \mathrm{d}s,
  \]
  we have
  \begin{align*}
    & \nm{e^{t\Delta} - I}_{
      \mathcal L(\dot H^2(\mathcal O, L^2(\mathcal O)))
    } = \Nm{
      \Delta \int_0^t e^{s\Delta} \, \mathrm{d}s
    }_{\mathcal L(\dot H^2(\mathcal O, L^2(\mathcal O)))} \\
    ={} &
    \Nm{
      \int_0^t e^{s\Delta} \, \mathrm{d}s \Delta
    }_{\mathcal L(\dot H^2(\mathcal O, L^2(\mathcal O)))}
    = \Nm{
      \int_0^t e^{s\Delta} \, \mathrm{d}s
    }_{\mathcal L(L^2(\mathcal O))} \\
    \leqslant{} &
    t \quad\text{(by \cref{eq:e^tDelta}),}
  \end{align*}
  which proves \cref{eq:I-e^Delta-h2}. It is clear, by \cref{eq:e^tDelta}, that
  \begin{equation}
    \label{eq:dddd}
    \Nm{I-e^{t\Delta}}_{
      \mathcal L(L^2(\mathcal O), L^2(\mathcal O))
    } \leqslant 2.
  \end{equation}
  Finally, in view of \cref{eq:I-e^Delta-h2,eq:dddd}, by interpolation
  (cf.~\cite[Theorems~2.6 and 4.36]{Lunardi2018}) we obtain \cref{eq:I-e^Delta-h1}. This
  completes the proof.
\end{proof}

\begin{lemma}
  \label{lem:e^tDeltah}
  For any $ t > 0 $, we have
  \begin{align}
    \nm{e^{t\Delta_h}Q_h}_{
      \mathcal L(\dot H_h^0(\mathcal O))
    } \leqslant 1, \label{eq:e^tDeltah} \\
    \nm{I-e^{t\Delta_h}}_{
      \mathcal L(\dot H_h^2(\mathcal O), \dot H_h^{0}(\mathcal O))
    } \leqslant t, \label{eq:I-e^Deltah-h2} \\
    \nm{I-e^{t\Delta_h}}_{
      \mathcal L(\dot H_h^1(\mathcal O), \dot H_h^{-1}(\mathcal O))
    } \leqslant t, \label{eq:I-e^Deltah-h1} \\
    \nm{I-e^{t\Delta_h}}_{
      \mathcal L(\dot H_h^0(\mathcal O), \dot H_h^{-1}(\mathcal O))
    } \lesssim t^{1/2}.
    \label{eq:I-e^Deltah-l2}
  \end{align}
\end{lemma}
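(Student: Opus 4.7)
The plan is to prove each of the four estimates by direct analogy with \cref{lem:e^tDelta}, exploiting that $-\Delta_h$ is a symmetric positive semidefinite operator on the finite-dimensional Hilbert space $(\mathcal V_h,(\cdot,\cdot)_{L^2(\mathcal O)})$. The starting observation is that $(\Delta_h v_h,w_h)_{L^2}=-(\nabla v_h,\nabla w_h)_{L^2}=(v_h,\Delta_h w_h)_{L^2}$ and $(-\Delta_h v_h, v_h)_{L^2}\geqslant 0$, so $-\Delta_h$ admits an orthonormal basis of eigenfunctions with nonnegative eigenvalues. Consequently $e^{t\Delta_h}$ commutes with every fractional power $(-\Delta_h)^{\beta/2}$ and is a contraction on each space $\dot H_h^\beta(\mathcal O)$. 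Since $Q_h$ is an $L^2$-orthogonal projection, this immediately yields \cref{eq:e^tDeltah}.

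For \cref{eq:I-e^Deltah-h2}, I would copy the calculation used for \cref{eq:I-e^Delta-h2}: write $e^{t\Delta_h}-I=\Delta_h\int_0^t e^{s\Delta_h}\,\mathrm{d}s$, commute $\Delta_h$ with $e^{s\Delta_h}$, and use that $\Delta_h$ is (up to sign) an isometry from $\dot H_h^2(\mathcal O)$ onto $\dot H_h^0(\mathcal O)$ by the very definition of the $\dot H_h^2$-norm; this reduces matters to bounding $\nm{\int_0^t e^{s\Delta_h}\,\mathrm{d}s}_{\mathcal L(\dot H_h^0(\mathcal O))}\leqslant\int_0^t 1\,\mathrm{d}s=t$.

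For \cref{eq:I-e^Deltah-h1}, the plan is to rewrite, for $v_h\in\mathcal V_h$, the quantity $(-\Delta_h)^{-1/2}(I-e^{t\Delta_h})v_h$ as $(-\Delta_h)^{1/2}\int_0^t e^{s\Delta_h}v_h\,\mathrm{d}s$, pull the norm inside the integral, and use the $\dot H_h^1$-contraction of $e^{s\Delta_h}$ to obtain $\nm{(I-e^{t\Delta_h})v_h}_{\dot H_h^{-1}(\mathcal O)}\leqslant\int_0^t\nm{v_h}_{\dot H_h^1(\mathcal O)}\,\mathrm{d}s=t\nm{v_h}_{\dot H_h^1(\mathcal O)}$. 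Finally, \cref{eq:I-e^Deltah-l2} can be obtained either by interpolating \cref{eq:I-e^Deltah-h1} with the trivial bound $\nm{I-e^{t\Delta_h}}_{\mathcal L(\dot H_h^{-1}(\mathcal O))}\leqslant 2$ at level $1/2$, exactly as in the proof of \cref{eq:I-e^Delta-h1}, or directly by spectral calculus through the elementary estimate $\sup_{\lambda\geqslant 0}(1-e^{-t\lambda})^2/\lambda\leqslant t$ applied eigenvalue-by-eigenvalue.

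Since every step is a routine transcription of the continuous-case argument from \cref{lem:e^tDelta}, I do not anticipate any real obstacle; the only point requiring care is the systematic use of commutativity between $e^{t\Delta_h}$ and the fractional powers $(-\Delta_h)^{\beta/2}$ when switching between the norms of $\dot H_h^\beta(\mathcal O)$ for different $\beta$, and the observation that on $\mathcal V_h$ the spaces $\dot H_h^\beta(\mathcal O)$ form a proper interpolation scale because they arise from a single self-adjoint operator.
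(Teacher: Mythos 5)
Your proposal is correct and is essentially the argument the paper has in mind: the paper omits the proof with the remark that it is similar to that of \cref{lem:e^tDelta}, and your transcription — semigroup contraction from the spectral decomposition of $-\Delta_h$, the identity $e^{t\Delta_h}-I=\Delta_h\int_0^t e^{s\Delta_h}\,\mathrm{d}s$ combined with commutation of $(-\Delta_h)^{\beta/2}$ with the semigroup to shift between the $\dot H_h^\beta$ scales, and interpolation (or the elementary bound $\sup_{\lambda\geqslant 0}(1-e^{-t\lambda})^2/\lambda\leqslant t$) for the half-order estimate — is exactly that adaptation, with the index shifts in \cref{eq:I-e^Deltah-h1,eq:I-e^Deltah-l2} handled correctly.
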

\noindent Since the proof of this lemma is similar to that of
\cref{lem:e^tDelta}, it is omitted here.

\begin{lemma}
  \label{lem:lbj}
  Assume that $ g \in L^2(0,T;L^2(\mathcal O)) $. Let
  \begin{equation}
    \label{eq:2073}
    \eta(t) := \int_t^T e^{(s-t)\Delta_h}Q_hg(s) \, \mathrm{d}s,
    \quad 0 \leqslant t \leqslant T,
  \end{equation}
  and define $ \{P_j\}_{j=0}^J \subset \mathcal V_h $ by
  \begin{equation}
    \label{eq:2074}
    P_j - P_{j+1} = \tau \Delta_h P_j +
    \int_{t_j}^{t_{j+1}} Q_h g(t) \, \mathrm{d}t
  \end{equation}
  for all $ 0 \leqslant j < J $, where $ P_J := 0 $. Then
  \begin{align}
    \max_{0 \leqslant j \leqslant J}
    \Nm{ \eta(t_j) - P_j }_{\dot H_h^{-1}(\mathcal O)}
    \lesssim \tau
    \nm{g}_{L^2(0,T;L^2(\mathcal O))}, \label{eq:xx-1} \\
    \Big(
      \sum_{j=0}^{J-1} \nm{\eta - P_j}_{
        L^2(t_j,t_{j+1};L^2(\mathcal O))
      }^2
    \Big)^{1/2} \lesssim \tau
    \nm{g}_{L^2(0,T;L^2(\mathcal O))}.
    \label{eq:xx-2}
  \end{align}
\end{lemma}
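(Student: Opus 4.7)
I would first recast both $\eta$ and $P_j$ as solutions of the same backward problem, continuous vs.\ time-discrete. Differentiating \cref{eq:2073} under the integral shows that $\eta$ satisfies the (deterministic) backward ODE in $\mathcal V_h$
\[
  -\eta'(t) = \Delta_h \eta(t) + Q_h g(t), \quad t \in [0,T], \qquad \eta(T)=0,
\]
and \cref{eq:2074} is the corresponding backward Euler discretization. Subtracting, the error $E_j := \eta(t_j) - P_j$ satisfies the recursion
\[
  E_j - E_{j+1} - \tau \Delta_h E_j = \rho_j, \qquad E_J = 0,
\]
with local consistency $\rho_j := \int_{t_j}^{t_{j+1}} \Delta_h\bigl(\eta(s) - \eta(t_j)\bigr)\,\mathrm{d}s$, obtained by writing $\eta(t_j) - \eta(t_{j+1}) = \int_{t_j}^{t_{j+1}}(\Delta_h\eta + Q_h g)\,\mathrm{d}s$ and subtracting the scheme so that the $Q_h g$ pieces cancel exactly.

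Next I would run the $\dot H_h^{-1}$ energy estimate: test the error equation against $(-\Delta_h)^{-1} E_j$ in the $L^2$ inner product. The left-hand side produces (via the standard polarization identity) $\frac12(\|E_j\|_{\dot H_h^{-1}}^2 - \|E_{j+1}\|_{\dot H_h^{-1}}^2 + \|E_j-E_{j+1}\|_{\dot H_h^{-1}}^2) + \tau\|E_j\|_{L^2}^2$, using $(\Delta_h E_j, (-\Delta_h)^{-1} E_j) = -\|E_j\|_{L^2}^2$. The right-hand side $(\rho_j, (-\Delta_h)^{-1}E_j)$ equals $-\int_{t_j}^{t_{j+1}}(\eta(s)-\eta(t_j), E_j)\,\mathrm{d}s$ after moving $\Delta_h$ onto the test function, which Cauchy--Schwarz and Young's inequality control by $\frac12\|\eta-\eta(t_j)\|_{L^2(t_j,t_{j+1};L^2)}^2 + \frac{\tau}{2}\|E_j\|_{L^2}^2$. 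Absorbing the last term into the $\tau\|E_j\|_{L^2}^2$ on the left and summing from $j=m$ down to $J-1$ (with $E_J=0$) yields
\[
  \|E_m\|_{\dot H_h^{-1}}^2 + \tau\sum_{j=m}^{J-1}\|E_j\|_{L^2}^2 \lesssim \sum_{j=0}^{J-1}\|\eta-\eta(t_j)\|_{L^2(t_j,t_{j+1};L^2)}^2.
\]

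Finally, I would bound the right-hand side using maximal $L^2$-regularity for the (time-reversed) heat equation, which gives $\|\Delta_h \eta\|_{L^2(0,T;L^2)} \lesssim \|g\|_{L^2(0,T;L^2)}$. From $\eta(s)-\eta(t_j) = \int_{t_j}^s \eta'(r)\,\mathrm{d}r = -\int_{t_j}^s (\Delta_h\eta + Q_h g)\,\mathrm{d}r$ and Cauchy--Schwarz, each summand is bounded by $\tau^2 \int_{t_j}^{t_{j+1}}(\|\Delta_h\eta\|_{L^2}^2 + \|g\|_{L^2}^2)\,\mathrm{d}r$, so the full sum is $\lesssim \tau^2\|g\|_{L^2(0,T;L^2)}^2$. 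This immediately gives \cref{eq:xx-1} by taking the maximum over $m$. For \cref{eq:xx-2}, I would split $\|\eta - P_j\|_{L^2(t_j,t_{j+1};L^2)}^2 \leq 2\|\eta - \eta(t_j)\|_{L^2(t_j,t_{j+1};L^2)}^2 + 2\tau\|E_j\|_{L^2}^2$ and sum over $j$: the first piece is already controlled by the oscillation bound, while the second is exactly the $\tau\sum_j\|E_j\|_{L^2}^2$ quantity controlled by the energy estimate, each contributing $\lesssim \tau^2\|g\|_{L^2}^2$.

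The main difficulty is obtaining the full $O(\tau)$ (not merely $O(\tau^{1/2})$) rate with data only in $L^2(0,T;L^2(\mathcal O))$. A naive mild-form comparison of $e^{(s-t_j)\Delta_h}$ with the backward-Euler resolvent iterate $R_\tau^{k-j+1}$ gives only $O(\tau^{1/2})$ after telescoping (or an additional $\log$ factor with Pad\'e smoothing). The key to squeezing out the extra half-power is to choose the weaker $\dot H_h^{-1}$ target norm, integrate $\Delta_h$ by parts onto the test function so the residual involves $\eta(s)-\eta(t_j)$ only in $L^2$, and then pay for this entirely with parabolic maximal regularity; this is what allows $\|g\|_{L^2(L^2)}$ on the right without any additional time regularity on $g$.
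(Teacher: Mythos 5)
Your proof is correct and follows essentially the same route as the paper's: both test the error recursion against $(-\Delta_h)^{-1}$ of the error, move $\Delta_h$ onto the test function so the residual appears only through $\eta(s)-\eta(t_j)$ in $L^2$, and close with the oscillation bound $\sum_j\nm{\eta-\eta(t_j)}^2_{L^2(t_j,t_{j+1};L^2)}\lesssim\tau^2\nm{g}^2$ via parabolic maximal regularity. The only (welcome) difference is that your single energy identity also yields $\tau\sum_j\nm{E_j}^2_{L^2(\mathcal O)}\lesssim\tau^2\nm{g}^2$ and hence \cref{eq:xx-2} directly, whereas the paper proves only \cref{eq:xx-1} in detail and defers \cref{eq:xx-2} to a citation of Meidner--Vexler.
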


\begin{lemma} 
  \label{lem:dg-spatial-g}
  For any $ g \in L^2(0,T;L^2(\mathcal O)) $,
  \begin{small}
  \begin{equation}
    \label{eq:0-1}
    \Big(
      \int_0^T \Nm{
        \int_t^T \big(
          e^{(s-t)\Delta} - e^{(s-t)\Delta_h} Q_h
        \big) g(s) \, \mathrm{d}s
      }_{L^2(\mathcal O)}^2 \, \mathrm{d}t
    \Big)^{1/2}
    \lesssim
    h^2 \nm{g}_{L^2(0,T;L^2(\mathcal O))}.
  \end{equation}
  \end{small}
\end{lemma}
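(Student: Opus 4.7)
The plan is to identify the inner integral on the left-hand side of \cref{eq:0-1} as the difference $p-p_h$ between a continuous backward parabolic equation and its semi-discrete Galerkin approximation, and then to reduce \cref{eq:0-1} to the classical $L^2(0,T;L^2(\mathcal O))$-error estimate of order $O(h^2)$ for the heat equation with an $L^2(L^2)$ right-hand side.

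Concretely, I would set
\[
  p(t) := \int_t^T e^{(s-t)\Delta} g(s)\, \mathrm{d}s,
  \qquad
  p_h(t) := \int_t^T e^{(s-t)\Delta_h} Q_h g(s)\, \mathrm{d}s,
\]
so that $p$ and $p_h$ are the mild solutions of the backward parabolic equations $-\partial_t p = \Delta p + g$, $p(T)=0$, and $-\partial_t p_h = \Delta_h p_h + Q_h g$, $p_h(T)=0$, respectively. The time reversal $\tilde t := T-t$, $\tilde g(\tilde t) := g(T-\tilde t)$ preserves $L^2(0,T;L^2(\mathcal O))$-norms and converts $(p,p_h)$ into the continuous and semi-discrete FEM mild solutions $(y,y_h)$ of the forward heat equation with source $\tilde g$ and zero initial datum. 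Thus \cref{eq:0-1} is equivalent to
\[
  \nm{y-y_h}_{L^2(0,T;L^2(\mathcal O))} \lesssim h^2 \nm{\tilde g}_{L^2(0,T;L^2(\mathcal O))},
\]
which is the classical $L^2(L^2)$ semi-discrete FEM error estimate.

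To establish this reduced estimate, I would argue by duality: for any $\psi \in L^2(0,T;L^2(\mathcal O))$, solve the dual backward heat equation $-\partial_t w = \Delta w + \psi$ with $w(T) = 0$; integration by parts in time together with Galerkin orthogonality reduces $(y-y_h,\psi)_{L^2(L^2)}$ to terms bounded by a product of $h^2\nm{\Delta y}_{L^2(L^2)}$ and $h^2 \nm{\Delta w}_{L^2(L^2)}$-type factors, via the $O(h^2)$ Ritz-projection bound $\nm{(I-R_h)u}_{L^2(\mathcal O)} \lesssim h^2\nm{u}_{\dot H^2(\mathcal O)}$. The parabolic maximal $L^2$-regularity estimates $\nm{\Delta w}_{L^2(L^2)} \lesssim \nm{\psi}_{L^2(L^2)}$ and $\nm{\Delta y}_{L^2(L^2)} \lesssim \nm{\tilde g}_{L^2(L^2)}$, which follow at once from the spectral calculus of the self-adjoint operator $-\Delta$, then yield the desired bound upon taking the supremum over unit-norm $\psi$.

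The main obstacle in carrying out the reduction is reconciling the $L^2$-orthogonal projection $Q_h$ (which naturally appears in the mild formulation of $p_h$) with the Ritz projection $R_h$ (which is natural for Galerkin orthogonality). This is handled by the standard estimate $\nm{(Q_h-R_h)v}_{L^2(\mathcal O)} \lesssim h^2 \nm{v}_{\dot H^2(\mathcal O)}$, a direct consequence of the $L^2$- and $H^1$-approximation properties of $\mathcal V_h$. Alternatively, the reduced estimate can simply be invoked from the standard literature on semi-discrete Galerkin finite element methods for parabolic equations.
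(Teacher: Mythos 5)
Your proposal is correct and follows essentially the same route as the paper, which gives no proof of \cref{lem:dg-spatial-g} but refers to \cite[Theorem~5.5]{Vexler2008I}; that cited result is precisely the $O(h^2)$ $L^2(0,T;L^2(\mathcal O))$ estimate for the spatial semi-discretization of a (backward) parabolic equation with $L^2(L^2)$ data, proved by the duality/Ritz-projection argument you sketch, and the convexity of $\mathcal O$ supplies the $H^2$-regularity and maximal regularity you invoke. The only slip is cosmetic: the duality pairing yields one overall factor of $h^2$ (e.g.\ $h\nm{\Delta y}\cdot h\nm{\Delta w}$ type terms), not a product of two $h^2$ factors as your wording suggests, but this does not affect the conclusion.
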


\begin{lemma} 
  \label{lem:dg-conv-inf}
  If $ g \in L^2(0,T;L^2(\mathcal O)) $, then
  \begin{small}
  \begin{align*}
      & \sum_{j=0}^{J-1} \tau \Big\|
      \sum_{k=0}^j
      \int_{t_k}^{t_{k+1}}
      \big(
        e^{(t_{j+1} - t)\Delta} -
        (I-\tau\Delta_h)^{-(j-k+1)} Q_h
      \big) g(t) \, \mathrm{d}t
      \Big\|_{L^2(\mathcal O)}^2 \\
      \lesssim{} &
      (\tau + h^2)^2 \nm{g}_{L^2(0,T;L^2(\mathcal O))}^2.
  \end{align*}
  \end{small}
\end{lemma}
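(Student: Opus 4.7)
The plan is to split the total error into a pure spatial part and a pure temporal part, and handle each by duality against a test sequence in $\ell^2_\tau(L^2)$, reducing to the backward-problem bounds provided by \cref{lem:lbj,lem:dg-spatial-g}. Inserting the intermediate operator $e^{(t_{j+1}-t)\Delta_h}Q_h$ yields
\[
e^{(t_{j+1}-t)\Delta} - (I-\tau\Delta_h)^{-(j-k+1)} Q_h
= \bigl[e^{(t_{j+1}-t)\Delta} - e^{(t_{j+1}-t)\Delta_h} Q_h\bigr]
+ \bigl[e^{(t_{j+1}-t)\Delta_h} - (I-\tau\Delta_h)^{-(j-k+1)}\bigr] Q_h,
\]
so the error splits as $E_{j+1} = E^{(s)}_{j+1} + E^{(t)}_{j+1}$. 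It suffices to show $\|\{E^{(s)}_{j+1}\}\|_{\ell^2_\tau(L^2)} \lesssim h^2 \|g\|_{L^2(0,T;L^2)}$ and $\|\{E^{(t)}_{j+1}\}\|_{\ell^2_\tau(L^2)} \lesssim \tau \|g\|_{L^2(0,T;L^2)}$.

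For each piece, by $\ell^2_\tau(L^2)$ duality the norm equals the supremum of $\sum_j \tau\langle E^{(*)}_{j+1},\phi_{j+1}\rangle$ over test sequences with $\sum_j \tau\|\phi_{j+1}\|^2\leq 1$. Extending $\tilde\phi(t) := \phi_{j+1}$ on $[t_j,t_{j+1})$, using self-adjointness of $e^{\cdot\Delta}$, $e^{\cdot\Delta_h}$, $(I-\tau\Delta_h)^{-1}$ and $Q_h$, and interchanging the orders of summation and integration, the pairing transforms into $\int_0^T\langle g(t), M^{(*)}(t)\rangle dt$ where $M^{(*)}(t)$ is an explicit linear functional of $\tilde\phi$ that depends on the subinterval containing $t$. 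Cauchy--Schwarz then reduces everything to bounding $\|M^{(s)}\|_{L^2(0,T;L^2)} \lesssim h^2$ and $\|M^{(t)}\|_{L^2(0,T;L^2)} \lesssim \tau$ uniformly over the unit ball.

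The core identification is that, for $t\in[t_k,t_{k+1})$, the discrete contribution to $M^{(t)}$ coincides with the backward Euler iterate $P_k$ from \cref{lem:lbj} applied to source $\tilde\phi$, while the continuous contribution is a right-endpoint Riemann approximation of $\eta_h(t) := \int_t^T e^{(s-t)\Delta_h} Q_h\tilde\phi(s)\,\mathrm{d}s$. Inserting $\eta_h$ as an intermediate, \cref{lem:lbj} directly provides $\|\eta_h - P\|_{L^2(0,T;L^2)} \lesssim \tau\|\tilde\phi\|_{L^2(0,T;L^2)}$. For $M^{(s)}$ the analogous decomposition compares the continuous dual $\eta(t) := \int_t^T e^{(s-t)\Delta}\tilde\phi(s)\,\mathrm{d}s$ with $\eta_h(t)$, and \cref{lem:dg-spatial-g} supplies $\|\eta - \eta_h\|_{L^2(0,T;L^2)} \lesssim h^2\|\tilde\phi\|_{L^2(0,T;L^2)}$.

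The hard part is controlling the Riemann-sum residual, namely the $L^2(0,T;L^2)$-distance between $\sum_{j\geq k(t)}\tau e^{(t_{j+1}-t)\Delta}\phi_{j+1}$ and $\eta(t)$, and similarly its $\Delta_h$ analogue. A naive per-term triangle inequality only yields $O(\sqrt\tau)$, whereas the claimed rate is $O(\tau)$. To sharpen this I would use the identity
\[
\tau\, e^{\tau\Delta} - \int_0^\tau e^{w\Delta}\,\mathrm{d}w = \int_0^\tau w\, \Delta e^{w\Delta}\,\mathrm{d}w,
\]
rewriting the quadrature error on each subinterval as $\int_0^\tau w\,\Delta e^{(t_l-t+w)\Delta}\phi_{l+1}\,\mathrm{d}w$; combined with $\|w\,\Delta e^{w\Delta}\|_{\mathcal L(L^2(\mathcal O))}\leq C$ and $\|\Delta e^{(a+w)\Delta}\|_{\mathcal L(L^2(\mathcal O))}\leq C/(a+w)$, this gives per-term operator bounds of size $\tau^2/(t_l - t)$ far from $t$ and $O(\tau)$ nearby. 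Assembling these into a discrete convolution against $\tilde\phi$ and invoking the discrete maximal regularity of the backward Euler scheme on $\ell^2_\tau(L^2)$ (equivalently, a Parseval-based argument in the eigenbasis of $\Delta$ and $\Delta_h$) delivers the sharp $O(\tau)$ bound on the Riemann residual. Summing the three contributions and taking the supremum over the unit ball of test sequences completes the proof.
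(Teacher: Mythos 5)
The paper never proves \cref{lem:dg-conv-inf} itself: it is dispatched by the remark citing Theorems~5.1 and 5.5 of Meidner and Vexler, which supply the $O(\tau)$ temporal and $O(h^2)$ spatial $L^2(0,T;L^2(\mathcal O))$ error bounds for the backward Euler/piecewise-linear discretization of the deterministic heat equation (plus the small extra step of passing from the piecewise-constant error to the nodal-value error, which costs $\tau\nm{y'}_{L^2(0,T;L^2(\mathcal O))}\lesssim\tau\nm{g}_{L^2(0,T;L^2(\mathcal O))}$ by maximal parabolic regularity). Your proposal is therefore self-contained where the paper outsources, but it is close in spirit to the cited proofs: you dualize against a test sequence in $\ell^2_\tau(L^2(\mathcal O))$, observe via self-adjointness that the adjoint of the fully discrete solution operator is exactly the backward Euler iterate $P_k=\sum_{j\ge k}\tau(I-\tau\Delta_h)^{-(j-k+1)}Q_h\phi_{j+1}$ of \cref{lem:lbj} driven by the test function, and reduce the spatial part to \cref{lem:dg-spatial-g} and the temporal part to \cref{eq:xx-2}. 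These identifications are all correct, and the argument closes once the right-endpoint quadrature residual $\sum_{j\ge k}\tau e^{(t_{j+1}-t)\Delta}\phi_{j+1}-\int_t^T e^{(s-t)\Delta}\tilde\phi(s)\,\mathrm{d}s$ (and its $\Delta_h$ analogue) is shown to be $O(\tau)$ in $L^2(0,T;L^2(\mathcal O))$ uniformly over the unit ball of test sequences. What your route buys is a proof from ingredients the paper already establishes (notably \cref{lem:lbj}, proved in the appendix); what the citation buys is brevity.

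One caveat on the single delicate step, which you correctly isolate but describe loosely. The per-term smoothing bounds of size $\tau^2/(t_l-t)$ assemble into a convolution kernel whose $\ell^1$-norm is of order $\tau\log(1/\tau)$, so by themselves they lose a logarithm; they are not ``equivalent'' to the spectral argument but strictly weaker, and the spectral argument must carry the load. It does: expanding in the eigenbasis of $\Delta$ (resp.\ $\Delta_h$), the scalar quadrature kernel for the eigenvalue $\lambda$ satisfies $\sum_m\snm{a_m(\lambda)}\lesssim\min(\lambda^{-1},\tau)\leqslant\tau$ uniformly in $\lambda$, using $\snm{e^{-\lambda(t_{j+1}-t)}-e^{-\lambda(s-t)}}\leqslant\min(1,\lambda\tau)\,e^{-\lambda(s-t)}$ on each cell, so Schur's test gives $\nm{R_\lambda}_{L^2(0,T)}\lesssim\tau\nm{\phi_\lambda}_{\ell^2_\tau}$ and Parseval sums the modes; the $j=k$ cell, whose length mismatch $\tau-(t_{k+1}-t)$ you should not forget, contributes a further harmless $O(\tau)$. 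With that substitution made explicit, your proof is complete and correct.
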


\begin{lemma} 
  \label{lem:dg-stab}
  Let $ g \in L^2(0,T;L^2(\mathcal O)) $. Define $ \{P_j\}_{j=0}^J \subset
  \mathcal V_h $ by
  \begin{equation}
    \begin{cases}
      P_j - P_{j+1} = \tau \Delta_h P_j +
      Q_h \int_{t_j}^{t_{j+1}} g(t) \, \mathrm{d}t,
      \quad 0 \leqslant j < J, \\
      P_J = 0.
    \end{cases}
  \end{equation}
  Then
  \begin{equation}
    \tau^{-1} \sum_{j=0}^{J-1} \nm{P_j - P_{j+1}}_{L^2(\mathcal O)}^2 +
    \tau \sum_{j=0}^{J-1} \nm{\Delta_h P_j}_{L^2(\mathcal O)}
    \lesssim \nm{g}_{L^2(0,T;L^2(\mathcal O))}^2.
  \end{equation}
\end{lemma}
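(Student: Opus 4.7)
The plan is a standard energy estimate for the backward Euler scheme, exploiting the self-adjointness of $\Delta_h$ and the boundary condition $P_J=0$. The key step is to take the $L^2(\mathcal O)$ inner product of the recursion
\[
  P_j - P_{j+1} = \tau \Delta_h P_j + Q_h \int_{t_j}^{t_{j+1}} g(t)\, \mathrm{d}t
\]
with $-\Delta_h P_j \in \mathcal V_h$. Since $-\Delta_h P_j$ lies in $\mathcal V_h$, the projection $Q_h$ on the forcing drops out. The left-hand side becomes, via the discrete Green's identity and the polarization formula,
\[
  (\nabla(P_j - P_{j+1}), \nabla P_j) = \tfrac{1}{2}\bigl(\nm{\nabla P_j}_{L^2}^2 - \nm{\nabla P_{j+1}}_{L^2}^2 + \nm{\nabla(P_j - P_{j+1})}_{L^2}^2\bigr),
\]
while the diffusion term yields $-\tau \nm{\Delta_h P_j}_{L^2}^2$ and the forcing contributes $-\bigl(\int_{t_j}^{t_{j+1}} g,\, \Delta_h P_j\bigr)$.

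Next I would rearrange, move $\tau \nm{\Delta_h P_j}^2$ to the left, and handle the forcing term by Cauchy–Schwarz together with $\bigl\|\int_{t_j}^{t_{j+1}} g\bigr\|_{L^2}^2 \leqslant \tau \nm{g}_{L^2(t_j,t_{j+1};L^2(\mathcal O))}^2$ and Young's inequality with weights $\tau$ and $1/\tau$, absorbing a factor $\tfrac{\tau}{2}\nm{\Delta_h P_j}^2$ on the left. Summing the resulting inequality from $j = 0$ to $J-1$ telescopes $\nm{\nabla P_j}^2 - \nm{\nabla P_{j+1}}^2$ to the nonnegative quantity $\nm{\nabla P_0}^2$ (using $P_J = 0$), which can be dropped, yielding
\[
  \tau \sum_{j=0}^{J-1} \nm{\Delta_h P_j}_{L^2}^2 \lesssim \nm{g}_{L^2(0,T;L^2(\mathcal O))}^2.
\]

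For the difference-quotient estimate, I would simply return to the scheme and bound $\nm{P_j - P_{j+1}}_{L^2}^2 \leqslant 2\tau^2\nm{\Delta_h P_j}_{L^2}^2 + 2\tau \nm{g}_{L^2(t_j,t_{j+1};L^2(\mathcal O))}^2$, then divide by $\tau$ and sum, invoking the bound just obtained. There is no real obstacle: the only point to keep in mind is the backward orientation of the recursion, which is precisely what makes the telescoping produce a boundary contribution at $j=J$ that vanishes. A minor check is that the appearance of $Q_h$ does not cost anything because every test function we use already lies in $\mathcal V_h$, so $Q_h$ may be removed in the duality pairings without comment.
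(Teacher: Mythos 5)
Your argument is correct and complete. Note that the paper does not actually prove this lemma: it only refers the reader to Corollary~4.7 of Meidner--Vexler, so there is no internal proof to compare against. Your energy estimate is the standard (and presumably the cited) argument: testing the backward recursion with $-\Delta_h P_j$, using $(a-b,a)=\tfrac12(\snm{a}^2-\snm{b}^2+\snm{a-b}^2)$ so that the gradient terms telescope to the harmless boundary term $\tfrac12\nm{\nabla P_0}_{L^2(\mathcal O)}^2$ (the terminal term vanishes since $P_J=0$), absorbing half of $\tau\nm{\Delta_h P_j}_{L^2(\mathcal O)}^2$ via Cauchy--Schwarz and Young, and then reading the difference-quotient bound directly off the scheme. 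Your observation that $Q_h$ drops out against test functions in $\mathcal V_h$ is exactly the right (and only) point of care. One cosmetic remark: as printed, the lemma has $\tau\sum_{j}\nm{\Delta_h P_j}_{L^2(\mathcal O)}$ without the exponent $2$; this is evidently a typo (the two sides would otherwise scale differently in $g$), and your proof establishes the intended squared version, which is also the form needed elsewhere in the paper.
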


\begin{lemma} 
  \label{lem:err_initial}
  For any $ 0 \leqslant j \leqslant J $,
  \[
    \Nm{
      e^{t_j\Delta} -
      (I-\tau\Delta_h)^{-j} Q_h
    }_{
      \mathcal L(\dot H^2(\mathcal O), L^2(\mathcal O))
    } \lesssim \tau + h^2.
  \]
\end{lemma}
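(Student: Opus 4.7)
The plan is to split the error operator by a triangle inequality into a spatial part and a temporal part,
\[
  e^{t_j\Delta}-(I-\tau\Delta_h)^{-j}Q_h
  = \bigl(e^{t_j\Delta}-e^{t_j\Delta_h}Q_h\bigr)
  + \bigl(e^{t_j\Delta_h}-(I-\tau\Delta_h)^{-j}\bigr)Q_h,
\]
and prove an $O(h^2)$ bound on the first piece and an $O(\tau)$ bound on the second piece, both in $\mathcal L(\dot H^2(\mathcal O),L^2(\mathcal O))$.

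For the spatial piece, $\bigl(e^{t\Delta}-e^{t\Delta_h}Q_h\bigr)v$ is precisely the pointwise-in-time error of the classical continuous piecewise linear semidiscretization of the heat equation with initial datum $v$. The standard Ritz-projection argument (decompose as $\rho(t)=e^{t\Delta}v-R_he^{t\Delta}v$ plus $\theta(t)=R_he^{t\Delta}v-e^{t\Delta_h}Q_hv$, bound $\rho$ via the $O(h^2)$ Ritz estimate on $\dot H^2$ data, and close $\theta$ by an energy argument using $\theta(0)=(R_h-Q_h)v$) yields
\[
  \nm{e^{t\Delta}v - e^{t\Delta_h}Q_h v}_{L^2(\mathcal O)}
  \lesssim h^2 \nm{v}_{\dot H^2(\mathcal O)} \qquad \forall\, t\geqslant 0,
\]
uniformly in $t$ thanks to the parabolic smoothing identity $\nm{e^{t\Delta}v}_{\dot H^2}\leqslant \nm{v}_{\dot H^2}$ (from $\Delta e^{t\Delta}=e^{t\Delta}\Delta$).

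For the temporal piece, I would diagonalize $-\Delta_h$ in its eigenbasis on $\mathcal V_h$. The elementary scalar inequality $|e^{-j\tau\lambda}-(1+\tau\lambda)^{-j}|\lesssim \tau\lambda$ for $\lambda\geqslant 0$, $j\geqslant 0$ (a short Taylor/monotonicity argument, noting that both sides are $\leqslant 1$ while $\tau\lambda$ can be used as the dominant factor when large), applied spectrally gives
\[
  \Nm{\bigl(e^{t_j\Delta_h}-(I-\tau\Delta_h)^{-j}\bigr)v_h}_{L^2(\mathcal O)}
  \lesssim \tau \nm{\Delta_h v_h}_{L^2(\mathcal O)}
  \qquad \forall\, v_h\in\mathcal V_h.
\]
Taking $v_h = Q_h v$, it remains to show $\nm{\Delta_h Q_h v}_{L^2} \lesssim \nm{v}_{\dot H^2}$. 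This I would obtain from the identity $\int_{\mathcal O}(\Delta_h Q_h v)w_h = \int_{\mathcal O}(\Delta v)w_h + \int_{\mathcal O}\nabla(v-Q_hv)\cdot\nabla w_h$ (integration by parts, using $w_h\in \dot H^1$), combined with the $O(h)$ bound on $\nm{\nabla(v-Q_hv)}_{L^2}$ for $v\in \dot H^2$ and the inverse inequality $\nm{\nabla w_h}_{L^2}\lesssim h^{-1}\nm{w_h}_{L^2}$ on the quasi-uniform mesh $\mathcal K_h$.

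The only delicate point I anticipate is uniformity in $t$ of the spatial bound: for merely $L^2$ data the standard $O(h^2)$ estimate degrades as $O(h^2/t)$ near $t=0$, but on $\dot H^2$ data it survives uniformly because of the commutation above. The remaining ingredients (Ritz projection error, inverse inequality, the rational-function bound for backward Euler, and the $L^2$-contraction of $e^{t\Delta_h}Q_h$ recorded in \cref{lem:e^tDeltah}) are all standard for continuous piecewise linear elements on a convex polytope, so no fundamentally new ideas are needed.
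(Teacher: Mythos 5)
Your decomposition into a spatial semidiscretization error plus a backward-Euler rational-approximation error is exactly the content of Thom\'ee's Theorems 3.1 and 7.1, which is all the paper offers for this lemma (it gives no proof, only that citation), so your route coincides with the intended one and your temporal estimate, the scalar bound $\snm{e^{-j\tau\lambda}-(1+\tau\lambda)^{-j}}\lesssim\tau\lambda$, and the bound $\nm{\Delta_h Q_h v}_{L^2(\mathcal O)}\lesssim\nm{v}_{\dot H^2(\mathcal O)}$ are all correct. The one soft spot is the $\theta$-part of your semidiscrete argument: the naive energy estimate produces $\int_0^t\nm{u_t(s)}_{\dot H^2(\mathcal O)}\,\mathrm{d}s$, which is not finite for merely $\dot H^2$ initial data, so the uniform-in-$t$ bound $\nm{e^{t\Delta}v-e^{t\Delta_h}Q_hv}_{L^2(\mathcal O)}\lesssim h^2\nm{v}_{\dot H^2(\mathcal O)}$ is not rescued by the commutation identity alone but requires the refined (duality/interpolation) argument behind Thom\'ee's Theorem 3.1, which you may as well cite directly as the paper does.
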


\begin{remark} 
  The proof of \cref{lem:lbj} is presented in \cref{sec:lbj}.
  \cref{lem:dg-spatial-g} can be proved by the same argument as that used in the
  proof of \cite[Theorem~5.5]{Vexler2008I}. For the proof of
  \cref{lem:dg-conv-inf}, we refer the reader to
  \cite[Theorems~5.1 and 5.5]{Vexler2008I}. For the proof of
  \cref{lem:dg-stab}, we refer the reader to \cite[Corollary~4.7]{Vexler2008I}.
  For the proof of \cref{lem:err_initial}, we refer the reader to
  \cite[Theorems~3.1 and 7.1] {Thomee2006}.
\end{remark}

\subsection{Stability and convergence of \texorpdfstring{$ \mathcal S_0 $}{}}
\label{ssec:S0}
Let us first analyze the stability of $ \mathcal S_0 $.
\begin{lemma}
  \label{lem:S0-stab}
  For any $ g \in L_\mathbb F^2(\Omega;L^2(0,T;\dot H^{-1}(\mathcal O))) $, we
  have
  \begin{equation}
    \label{eq:S0-stab}
    \begin{aligned}
      & \max_{0 \leqslant j \leqslant J}
      \nm{(\mathcal S_0g)_j}_{L^2(\Omega;L^2(\mathcal O))} +
      \nm{\mathcal S_0g}_{
        L^2(\Omega;L^2(0,T;\dot H^1(\mathcal O)))
      } \\
      \lesssim{} &
      \nm{g}_{L^2(\Omega;L^2(0,T;\dot H^{-1}(\mathcal O)))}.
    \end{aligned}
  \end{equation}
\end{lemma}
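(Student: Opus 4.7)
The plan is to derive a discrete energy inequality for the scheme and then close it with discrete Gr\"onwall. Abbreviate $Y_j := (\mathcal S_0 g)_j$ and $G_j := \int_{t_j}^{t_{j+1}} Q_h g(t)\,\mathrm{d}t$, so that \cref{eq:calS0} reads
\[
  Y_{j+1} - Y_j - \tau \Delta_h Y_{j+1} = G_j + Y_j\,\delta W_j, \qquad 0 \leqslant j < J,
\]
with $Y_0 = 0$. Testing with $Y_{j+1}$ in $L^2(\mathcal O)$, invoking the identity $(a-b,a) = \tfrac12(\nm{a}^2 - \nm{b}^2 + \nm{a-b}^2)$ and the definition of $\Delta_h$, which yields $-(\Delta_h v_h, v_h)_{L^2(\mathcal O)} = \nm{v_h}_{\dot H^1(\mathcal O)}^2$ for $v_h \in \mathcal V_h$, I obtain (writing $\nm{\cdot}$ for $\nm{\cdot}_{L^2(\mathcal O)}$)
\[
  \tfrac12\nm{Y_{j+1}}^2 - \tfrac12\nm{Y_j}^2 + \tfrac12\nm{Y_{j+1}-Y_j}^2 + \tau\nm{Y_{j+1}}_{\dot H^1(\mathcal O)}^2 = (G_j, Y_{j+1}) + \delta W_j (Y_j, Y_{j+1}).
\]

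Taking expectations, the delicate term is $\mathbb E[\delta W_j (Y_j, Y_{j+1})]$: it does not outright vanish, since $Y_{j+1}$ is $\mathcal F_{t_{j+1}}$-measurable and depends explicitly on $\delta W_j$. The key device is the split $Y_{j+1} = Y_j + (Y_{j+1}-Y_j)$. The first piece gives $\mathbb E[\delta W_j \nm{Y_j}^2] = 0$, using that $\nm{Y_j}^2$ is $\mathcal F_{t_j}$-measurable and $\delta W_j$ is independent of $\mathcal F_{t_j}$. For the second piece, Cauchy--Schwarz together with $\mathbb E[(\delta W_j)^2 \nm{Y_j}^2] = \tau\, \mathbb E\nm{Y_j}^2$ yields a bound by $\tfrac\tau2 \mathbb E\nm{Y_j}^2 + \tfrac12\mathbb E\nm{Y_{j+1}-Y_j}^2$, whose last summand is absorbed into the left-hand side.

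For the source term I apply $\dot H^{-1}$--$\dot H^1$ duality and Young's inequality:
\[
  \snm{\mathbb E(G_j, Y_{j+1})} \leqslant \tfrac{1}{2\tau}\nm{G_j}_{L^2(\Omega;\dot H^{-1}(\mathcal O))}^2 + \tfrac{\tau}{2}\nm{Y_{j+1}}_{L^2(\Omega;\dot H^1(\mathcal O))}^2,
\]
again absorbing the $\dot H^1$ term. Cauchy--Schwarz in time together with the $\dot H^{-1}$-stability of $Q_h$ (obtained by duality from its $\dot H^1$-stability on quasi-uniform meshes, via \cref{eq:H-Hh-equiv}) bounds $\nm{G_j}_{L^2(\Omega;\dot H^{-1})}^2$ by $C\tau \int_{t_j}^{t_{j+1}} \nm{g(t)}_{L^2(\Omega;\dot H^{-1})}^2\,\mathrm{d}t$. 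Collecting these estimates produces the discrete differential inequality
\[
  \mathbb E\nm{Y_{j+1}}^2 - \mathbb E\nm{Y_j}^2 + c\tau \nm{Y_{j+1}}_{L^2(\Omega;\dot H^1(\mathcal O))}^2 \leqslant \tau\, \mathbb E\nm{Y_j}^2 + C\!\int_{t_j}^{t_{j+1}} \!\nm{g}_{L^2(\Omega;\dot H^{-1}(\mathcal O))}^2\,\mathrm{d}t.
\]

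Finally, discrete Gr\"onwall extracts $\max_{0\leqslant j\leqslant J}\mathbb E\nm{Y_j}^2 \lesssim \nm{g}_{L^2(\Omega;L^2(0,T;\dot H^{-1}))}^2$, and telescoping the same inequality yields the $L^2(\Omega;L^2(0,T;\dot H^1))$-bound on $\mathcal S_0 g$ (since $\mathcal S_0 g$ is piecewise constant in time with nodal values $Y_j$). The main obstacle throughout is the correct treatment of the martingale cross-term $\delta W_j(Y_j, Y_{j+1})$; once the decomposition $Y_{j+1} = Y_j + (Y_{j+1}-Y_j)$ is in place, the remainder is standard energy analysis.
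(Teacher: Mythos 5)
Your proposal is correct and follows essentially the same route as the paper: test the scheme with $Y_{j+1}$, derive a discrete energy inequality, and close with the discrete Gr\"onwall inequality, then telescope for the $\dot H^1$ bound. The only (minor) difference is in the martingale cross-term: you keep the increment term $\tfrac12\nm{Y_{j+1}-Y_j}^2$ from the polarization identity and absorb $\mathbb E[\delta W_j(Y_j,Y_{j+1}-Y_j)]$ into it, whereas the paper bundles the noise into $(1+\delta W_j)Y_j$ and uses $\nm{(1+\delta W_j)Y_j}_{L^2(\Omega;L^2(\mathcal O))}=\sqrt{1+\tau}\,\nm{Y_j}_{L^2(\Omega;L^2(\mathcal O))}$ directly, which dispenses with the increment term altogether; both are valid, and your source-term bound could likewise be shortened by noting that $(Q_hg,Y_{j+1})_{L^2(\mathcal O)}=\dual{g,Y_{j+1}}_{\dot H^{-1},\dot H^1}$ since $Y_{j+1}\in\mathcal V_h$, avoiding any appeal to $\dot H^{-1}$-stability of $Q_h$.
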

\begin{proof}
  Let $ Y := \mathcal S_0g $. Fix $ 0 \leqslant j < J $. By \cref{eq:calS0} we
  have
  \begin{align*} 
    [Y_{j+1} - Y_j, \, Y_{j+1}] =
    \Big[
      \tau\Delta_h Y_{j+1} +
      \int_{t_j}^{t_{j+1}} g(t) \, \mathrm{d}t +
      Y_j \delta W_j, \, Y_{j+1}
    \Big],
  \end{align*}
  and so
  \begin{align*} 
    & \nm{Y_{j+1}}_{L^2(\Omega;L^2(\mathcal O))}^2 +
    \tau \nm{Y_{j+1}}_{L^2(\Omega;\dot H^1(\mathcal O))} \\
    ={} &
    [(1+\delta W_j)Y_j, \, Y_{j+1}] + \Big[
      \int_{t_j}^{t_{j+1}} g(t) \, \mathrm{d}t, \,
      Y_{j+1}
    \Big] \\
    \leqslant{} &
    \sqrt{1+\tau} \nm{Y_j}_{L^2(\Omega;L^2(\mathcal O))}
    \nm{Y_{j+1}}_{L^2(\Omega;L^2(\mathcal O))} + {} \\
    & \quad \sqrt\tau \nm{g}_{
      L^2(\Omega;L^2(t_j,t_{j+1};\dot H^{-1}(\mathcal O)))
    } \nm{Y_{j+1}}_{L^2(\Omega;\dot H^1(\mathcal O))} \\
    \leqslant{} &
    \frac12\nm{Y_j}_{L^2(\Omega;L^2(\mathcal O))}^2 +
    \frac{1+\tau}2 \nm{Y_{j+1}}_{L^2(\Omega;L^2(\mathcal O))}^2 + {} \\
    & \quad \nm{g}_{
      L^2(\Omega;L^2(t_j,t_{j+1};\dot H^{-1}(\mathcal O)))
    }^2 + \frac\tau4 \nm{Y_{j+1}}_{L^2(\Omega;\dot H^1(\mathcal O))}^2.
  \end{align*}
  It follows that
  \begin{equation}
    \label{eq:S0-stab-1}
    \begin{aligned}
      & \nm{Y_{j+1}}_{L^2(\Omega;L^2(\mathcal O))}^2 +
      \frac{3\tau}{2(1-\tau)} \nm{Y_{j+1}}_{
        L^2(\Omega;\dot H^1(\mathcal O))
      }^2 \\
      \leqslant{} &
      \frac1{1-\tau} \nm{Y_j}_{L^2(\Omega;L^2(\mathcal O))}^2 +
      \frac2{1-\tau} \nm{g}_{
        L^2(\Omega;L^2(t_j,t_{j+1};\dot H^{-1}(\mathcal O)))
      }^2.
    \end{aligned}
  \end{equation}
  By the fact $ Y_0 = 0 $, we then use the discrete Gronwall's inequality to
  derive
  \begin{equation}
    \label{eq:S0-stab-inf}
    \max_{0 \leqslant j \leqslant J}
    \nm{Y_j}_{L^2(\Omega;L^2(\mathcal O))}
    \lesssim \nm{g}_{
      L^2(\Omega;L^2(0,T;\dot H^{-1}(\mathcal O)))
    }.
  \end{equation}
  Moreover, summing both sides of \cref{eq:S0-stab-1} over $ j $ from $ 0 $ to $
  J-2 $ gives
  \begin{align*}
    & \sum_{j=0}^{J-2}
    \nm{Y_{j+1}}_{L^2(\Omega;L^2(\mathcal O))}^2 +
    \frac{3}{2(1-\tau)} \nm{Y}_{
      L^2(\Omega;L^2(0,T;\dot H^1(\mathcal O)))
    }^2 \\
    \leqslant{} &
    \frac1{1-\tau} \sum_{j=0}^{J-2} \nm{Y_j}_{
      L^2(\Omega;L^2(\mathcal O))
    }^2 + \frac2{1-\tau} \nm{g}_{
      L^2(\Omega;L^2(0,T;\dot H^{-1}(\mathcal O)))
    }^2,
  \end{align*}
  so that
  \begin{align*} 
    & \frac{3}{2(1-\tau)} \nm{Y}_{
      L^2(\Omega;L^2(0,T;\dot H^1(\mathcal O)))
    }^2 \\
    \leqslant{} &
    \Big(
      \frac1{1-\tau} - 1
    \Big) \sum_{j=0}^{J-2} \nm{Y_j}_{L^2(\Omega;
    L^2(\mathcal O))}^2 + \frac2{1-\tau} \nm{g}_{
      L^2(\Omega;L^2(0,T;\dot H^{-1}(\mathcal O)))
    }^2 \\
    ={} &
    \frac{\tau}{1-\tau} \sum_{j=0}^{J-2} \nm{Y_j}_{L^2(\Omega;
    L^2(\mathcal O))}^2 + \frac2{1-\tau} \nm{g}_{
      L^2(\Omega;L^2(0,T;\dot H^{-1}(\mathcal O)))
    }^2 \\
    \lesssim{}&
    \nm{g}_{L^2(\Omega;L^2(0,T;\dot H^{-1}(\mathcal O)))}^2
    \quad\text{(by \cref{eq:S0-stab-inf}).}
  \end{align*}
  It follows that
  \[ 
    \nm{Y}_{
      L^2(\Omega;L^2(0,T;\dot H^1(\mathcal O)))
    } \lesssim \nm{g}_{
      L^2(\Omega;L^2(0,T;\dot H^{-1}(\mathcal O)))
    }.
  \]
  Finally, combining \cref{eq:S0-stab-inf} and the above estimate proves
  \cref{eq:S0-stab} and hence this lemma.
\end{proof}

Then we analyze the convergence of $ \mathcal S_0 $, and the main result is the
following lemma.
\begin{lemma} 
  \label{thm:conv-S0}
  If $ g \in L_\mathbb F^2(\Omega;L^2(0,T;L^2(\mathcal O))) $, then
  \begin{equation}
    \label{eq:S0-conv}
    \nm{(S_0 - \mathcal S_0)g}_{
      L^2(\Omega;L^2(0,T;L^2(\mathcal O)))
    } \lesssim (\tau^{1/2} + h^2) \nm{g}_{
      L^2(\Omega;L^2(0,T;L^2(\mathcal O)))
    }.
  \end{equation}
\end{lemma}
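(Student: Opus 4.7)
The plan is to compare $y := S_0 g$ and $Y_j := (\mathcal S_0 g)_j$ through their ``mild'' representations at the nodes. Since $(\mathcal S_0 g)(t) = Y_j$ on $[t_j, t_{j+1})$, the triangle inequality gives
\[
  \nm{(S_0-\mathcal S_0)g}^2 \leqslant 2\sum_{j=0}^{J-1}\int_{t_j}^{t_{j+1}}\mathbb E\nm{y(t)-y(t_j)}^2\,\mathrm dt + 2\sum_{j=0}^{J-1}\tau\,\mathbb E\nm{e_j}^2,
\]
where $e_j := y(t_j)-Y_j$. The first sum is bounded by $O(\tau)\nm{g}^2$ by using \cref{eq:S0-mild-form} to write $y(t)-y(t_j)$ as an integral plus It\^o integral plus $(e^{(t-t_j)\Delta}-I)y(t_j)$, then applying \cref{eq:I-e^Delta-h1} with \cref{eq:S0-regu}. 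So the task reduces to showing $\sum_j\tau\,\mathbb E\nm{e_j}^2\lesssim (\tau+h^2)^2\nm{g}^2$.

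A straightforward induction on \cref{eq:calS0}, using $Y_k\delta W_k=\int_{t_k}^{t_{k+1}}Y_k\,\mathrm dW(r)$, produces the discrete mild formula
\[
  Y_j=\sum_{k=0}^{j-1}\int_{t_k}^{t_{k+1}}(I-\tau\Delta_h)^{-(j-k)}Q_h g(r)\,\mathrm dr+\sum_{k=0}^{j-1}\int_{t_k}^{t_{k+1}}(I-\tau\Delta_h)^{-(j-k)}Y_k\,\mathrm dW(r).
\]
Subtracting from \cref{eq:S0-mild-form-2} at $t=t_j$ splits $e_j = A_j + B_j$ into a deterministic difference and an It\^o difference. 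For the deterministic part, re-indexing $j\mapsto j-1$ in \cref{lem:dg-conv-inf} yields at once $\sum_j\tau\,\mathbb E\nm{A_j}^2\lesssim(\tau+h^2)^2\nm{g}^2$.

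The main work is the stochastic error $B_j$. I would decompose the integrand as
\begin{align*}
  e^{(t_j-r)\Delta}y(r)-(I-\tau\Delta_h)^{-(j-k)}Y_k
  ={}& \big[e^{(t_j-r)\Delta}-(I-\tau\Delta_h)^{-(j-k)}Q_h\big]y(r)\\
  &{}+(I-\tau\Delta_h)^{-(j-k)}Q_h\bigl(y(r)-y(t_k)\bigr)+(I-\tau\Delta_h)^{-(j-k)}Q_h\,e_k,
\end{align*}
apply It\^o's isometry, and estimate the three pieces separately. The first piece uses \cref{lem:err_initial} together with \cref{eq:I-e^Delta-h2} (to absorb the offset $r-t_k\leqslant\tau$ between $e^{(t_j-r)\Delta}$ and $e^{(t_j-t_k)\Delta}$), giving an operator-norm bound $\lesssim\tau+h^2$ from $\dot H^2$ to $L^2$; combined with \cref{eq:S0-regu} this piece contributes $\lesssim(\tau+h^2)^2\nm{g}^2$. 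The second piece is bounded pointwise in $L^2$ by $\nm{y(r)-y(t_k)}$, using the $L^2$-contractivity of $(I-\tau\Delta_h)^{-1}Q_h$ (\cref{eq:e^tDeltah} in discrete form), and yields $O(\tau)\nm{g}^2$ by the same argument as in Step~1. The third piece simply contributes $\sum_{k<j}\tau\,\mathbb E\nm{e_k}^2$.

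Collecting the bounds, $\mathbb E\nm{e_j}^2\lesssim\mathbb E\nm{A_j}^2+(\tau+h^2)^2\nm{g}^2+\tau\nm{g}^2+\sum_{k<j}\tau\,\mathbb E\nm{e_k}^2$. A discrete Gronwall argument applied to $S_j:=\sum_{k\leqslant j}\tau\,\mathbb E\nm{e_k}^2$ (noting $S_j\leqslant(1+C\tau)S_{j-1}+\tau c_j$, hence $S_J\leqslant e^{CT}\sum_j\tau c_j$) then gives $S_J\lesssim(\tau+h^2)^2\nm{g}^2+\tau\nm{g}^2$, and combining with Step~1 produces \cref{eq:S0-conv} via $(\tau+h^2)^2+\tau\lesssim(\tau^{1/2}+h^2)^2$. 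The principal difficulty is the stochastic part $B_j$: its integrand involves the unknown $Y_k$ through the multiplicative noise, so the decomposition must be arranged so that the unknown appears only through the clean Gronwall term in $e_k$, while the two remaining pieces are controllable by the a priori regularity of $y$ and the standard parabolic semigroup approximation lemmas.
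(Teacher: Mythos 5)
Your proposal is correct and rests on the same key ingredients as the paper (the mild-form comparison, \cref{lem:dg-conv-inf} for the deterministic part, \cref{lem:err_initial} combined with \cref{eq:I-e^Delta-h2} --- i.e.\ \cref{lem:zq} --- for the operator error in the stochastic integral, \cref{lem:y-yj} for the temporal increments, and a discrete Gronwall argument), but it is organized differently. The paper proceeds in two passes: it first introduces the auxiliary process $\widetilde Y$ of \cref{eq:wtYj-def}, whose noise term is the \emph{exact} It\^o integral $\int_{t_j}^{t_{j+1}} Q_h y(t)\,\mathrm{d}W(t)$, so that the stochastic error in $y-\widetilde Y$ involves only the operator difference applied to $y(t)\in\dot H^2(\mathcal O)$ (\cref{lem:wtY}); it then treats $E=\mathcal S_0 g-\widetilde Y$ as the solution of a perturbed discrete scheme and closes with Gronwall, the temporal-increment issue being hidden inside the source norm $\nm{y-\widetilde Y}$. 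You instead perform a single-pass three-way splitting of the stochastic integrand, which forces you to handle the increment $y(r)-y(t_k)$ explicitly (via the $L^2$-contractivity of $(I-\tau\Delta_h)^{-(j-k)}Q_h$ and \cref{lem:y-yj}) but lets the nodal error $e_k$ enter the Gronwall term directly without the detour through $\widetilde Y$. Both routes give $(\tau+h^2)^2+\tau\lesssim(\tau^{1/2}+h^2)^2$; yours is slightly more economical in that it dispenses with the intermediate lemma, while the paper's factorization isolates the ``noise-sampling'' error $\int y\,\mathrm{d}W \mapsto Y_j\delta W_j$ as a separate, reusable stability statement. One small point to make explicit when writing this up: \cref{lem:dg-conv-inf} and \cref{lem:y-yj} are stated with the index convention $t_{j+1}$ and exponent $j-k+1$, so your reindexing $j\mapsto j-1$ should be recorded, and the $L^2(\Omega)$ version of \cref{lem:dg-conv-inf} follows by applying it pathwise and taking expectations.
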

\begin{remark}
  We refer the reader to \cite[Theorems 1.1 and 1.2]{Yan2005} for related
  results.
\end{remark}

To prove the above lemma, we first introduce the following three lemmas.

\begin{lemma}
  \label{lem:zq}
  Assume that $ 0 \leqslant k \leqslant j < J $. For any $ t_k \leqslant t
  \leqslant t_{k+1} $ we have
  \begin{equation}
    \label{eq:Dleta-Deltah}
    \Nm{
      e^{(t_{j+1}-t)\Delta} - (I-\tau \Delta_h)^{-(j-k+1)} Q_h
    }_{\mathcal L(\dot H^2(\mathcal O), L^2(\mathcal O))}
    \lesssim \tau + h^2.
  \end{equation}
\end{lemma}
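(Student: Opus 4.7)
My plan is to split the operator difference into two pieces by inserting the intermediate quantity $e^{(t_{j+1}-t_k)\Delta}$, so that
\[
  e^{(t_{j+1}-t)\Delta} - (I-\tau\Delta_h)^{-(j-k+1)}Q_h = \bigl(e^{(t_{j+1}-t)\Delta} - e^{(t_{j+1}-t_k)\Delta}\bigr) + \bigl(e^{(t_{j+1}-t_k)\Delta} - (I-\tau\Delta_h)^{-(j-k+1)}Q_h\bigr),
\]
and then bound each piece as a map $\dot H^2(\mathcal O) \to L^2(\mathcal O)$.

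For the first piece, I would factor
\[
  e^{(t_{j+1}-t)\Delta} - e^{(t_{j+1}-t_k)\Delta} = e^{(t_{j+1}-t)\Delta}\bigl(I - e^{(t-t_k)\Delta}\bigr),
\]
use that $e^{(t_{j+1}-t)\Delta}$ is a contraction on $L^2(\mathcal O)$ by \cref{eq:e^tDelta}, and invoke \cref{eq:I-e^Delta-h2} on the factor $I - e^{(t-t_k)\Delta}$. Since $t - t_k \in [0,\tau]$, this gives an $\mathcal L(\dot H^2(\mathcal O), L^2(\mathcal O))$ bound of order $\tau$.

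For the second piece, note that $t_{j+1} - t_k = (j-k+1)\tau$ with $0 \leqslant j-k+1 \leqslant J$, so \cref{lem:err_initial} applied with the index $j - k + 1$ yields directly
\[
  \bigl\| e^{(t_{j+1}-t_k)\Delta} - (I-\tau\Delta_h)^{-(j-k+1)}Q_h \bigr\|_{\mathcal L(\dot H^2(\mathcal O), L^2(\mathcal O))} \lesssim \tau + h^2.
\]

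Combining the two pieces by the triangle inequality gives $\tau + (\tau + h^2) \lesssim \tau + h^2$, which is the desired estimate. I do not foresee a real obstacle here: the lemma is essentially a packaging of \cref{lem:err_initial} together with the elementary continuity-in-time bound \cref{eq:I-e^Delta-h2} for the continuous semigroup on $\dot H^2(\mathcal O)$, and the only thing to verify is that the index $j - k + 1$ stays in the admissible range $[0,J]$ so that \cref{lem:err_initial} applies.
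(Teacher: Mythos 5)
Your proposal is correct and follows essentially the same route as the paper's proof: the same splitting via $e^{(t_{j+1}-t_k)\Delta}$, the same factorization $e^{(t_{j+1}-t)\Delta}(I-e^{(t-t_k)\Delta})$ handled by \cref{eq:e^tDelta} and \cref{eq:I-e^Delta-h2}, and the same appeal to \cref{lem:err_initial} for the discrete piece. The index check is also fine, since $0 \leqslant k \leqslant j < J$ gives $1 \leqslant j-k+1 \leqslant J$.
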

\begin{proof} 
  Since
  \[ 
    e^{(t_{j+1} - t)\Delta} - e^{(t_{j+1}-t_k)\Delta}
    = e^{(t_{j+1} - t)\Delta} \big(
      I - e^{(t-t_k)\Delta}
    \big),
  \]
  by \cref{eq:e^tDelta,eq:I-e^Delta-h2} we obtain
  \[ 
    \Nm{
      e^{(t_{j+1}-t)\Delta} - e^{(t_{j+1}-t_k)\Delta}
    }_{\mathcal L(\dot H^2(\mathcal O), L^2(\mathcal O))}
    \leqslant \tau.
  \]
  Also, by \cref{lem:err_initial} we have
  \[ 
    \Nm{
      e^{(t_{j+1}-t_k)\Delta} -
      (I-\tau\Delta_h)^{-(j-k+1)} Q_h
    }_{
      \mathcal L(\dot H^2(\mathcal O), L^2(\mathcal O))
    } \lesssim \tau + h^2.
  \]
  Therefore, \cref{eq:Dleta-Deltah} follows from the estimate
  \begin{align*} 
    & \Nm{
      e^{(t_{j+1}-t)\Delta} - (I-\tau \Delta_h)^{-(j-k+1)} Q_h
    }_{\mathcal L(\dot H^2(\mathcal O), L^2(\mathcal O))} \\
    \leqslant{} &
    \Nm{
      e^{(t_{j+1}-t)\Delta} - e^{(t_{j+1}-t_k)\Delta}
    }_{\mathcal L(\dot H^2(\mathcal O), L^2(\mathcal O))} + {} \\
    & \quad \Nm{
      e^{(t_{j+1}-t_k)\Delta} - (I-\tau\Delta_h)^{-(j-k+1)} Q_h
    }_{\mathcal L(\dot H^2(\mathcal O), L^2(\mathcal O))}.
  \end{align*}
  This completes the proof.
\end{proof}

\begin{lemma}
  \label{lem:y-yj}
  For any $ g \in L_\mathbb F^2(\Omega;L^2(0,T;L^2(\mathcal O))) $, we have
  \begin{equation} 
    \label{eq:y-yj}
    \sum_{j=0}^{J-1} \nm{
      y - y(t_j)
    }_{L^2(\Omega;L^2(t_j,t_{j+1};L^2(\mathcal O)))}^2
    \lesssim
    \tau \nm{g}_{
      L^2(\Omega;L^2(0,T;L^2(\mathcal O)))
    }^2,
  \end{equation}
  where $ y:= S_0 g $.
\end{lemma}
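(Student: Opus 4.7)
The plan is to work from the mild formulation \cref{eq:S0-mild-form} with $s=t_j$, which gives the decomposition
\[
  y(t)-y(t_j) = \bigl(e^{(t-t_j)\Delta}-I\bigr)y(t_j)
  + \int_{t_j}^t e^{(t-r)\Delta} g(r)\,\mathrm{d}r
  + \int_{t_j}^t e^{(t-r)\Delta} y(r)\,\mathrm{d}W(r),
\]
for $t\in[t_j,t_{j+1}]$. I will bound each of the three pieces separately in $L^2(\Omega;L^2(t_j,t_{j+1};L^2(\mathcal O)))$ and then sum over $0\leqslant j<J$. All three bounds should carry an extra factor of $\tau$ at the end, so no cancellation between terms is needed; the argument is essentially a Hölder-$\tfrac12$-in-time estimate for the mild solution.

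For the semigroup term, I use \cref{eq:I-e^Delta-h1} to get $\nm{(e^{(t-t_j)\Delta}-I)y(t_j)}_{L^2(\mathcal O)}\lesssim (t-t_j)^{1/2}\nm{y(t_j)}_{\dot H^1(\mathcal O)}$. Squaring, integrating over $t\in(t_j,t_{j+1})$, taking expectation and summing gives
\[
  \sum_{j=0}^{J-1}\tau^2\,\mathbb E\nm{y(t_j)}_{\dot H^1(\mathcal O)}^2
  \lesssim \tau\cdot T\,\nm{y}_{L^2(\Omega;C([0,T];\dot H^1(\mathcal O)))}^2
  \lesssim \tau\,\nm{g}_{L^2(\Omega;L^2(0,T;L^2(\mathcal O)))}^2,
\]
where the last inequality is the forward-equation regularity \cref{eq:S0-regu} with $\beta=0$. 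For the deterministic convolution, Cauchy--Schwarz combined with \cref{eq:e^tDelta} gives $\nm{\int_{t_j}^t e^{(t-r)\Delta}g(r)\,\mathrm{d}r}_{L^2(\mathcal O)}^2\leqslant (t-t_j)\nm{g}_{L^2(t_j,t;L^2(\mathcal O))}^2$, and another integration in $t$ followed by expectation produces a bound $\lesssim \tau^2\nm{g}_{L^2(\Omega;L^2(0,T;L^2(\mathcal O)))}^2$, which is in fact better than required.

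The stochastic convolution is handled by the Itô isometry listed at the start of \cref{sec:proofs}:
\[
  \mathbb E\Nm{\int_{t_j}^t e^{(t-r)\Delta}y(r)\,\mathrm{d}W(r)}_{L^2(\mathcal O)}^2
  =\int_{t_j}^t \mathbb E\nm{e^{(t-r)\Delta}y(r)}_{L^2(\mathcal O)}^2\,\mathrm{d}r
  \leqslant \int_{t_j}^t \mathbb E\nm{y(r)}_{L^2(\mathcal O)}^2\,\mathrm{d}r,
\]
by \cref{eq:e^tDelta}. Integrating in $t\in(t_j,t_{j+1})$ and summing in $j$ yields $\tau\,\nm{y}_{L^2(\Omega;L^2(0,T;L^2(\mathcal O)))}^2$, which is controlled by $\tau\nm{g}_{L^2(\Omega;L^2(0,T;L^2(\mathcal O)))}^2$ via \cref{eq:S0-regu}.

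There is no real obstacle here; the only mild subtlety is that the semigroup piece must be handled through the $\dot H^1$ regularity of $y$ (and not the $\dot H^2$ regularity), because the bound has to be pulled out of the time integral and we can only afford a $(t-t_j)^{1/2}$ smoothing factor after squaring and integrating. Combining the three estimates proves \cref{eq:y-yj}.
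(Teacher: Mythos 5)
Your proof is correct and follows essentially the same route as the paper: both start from the mild formulation \cref{eq:S0-mild-form} with $s=t_j$, split $y(t)-y(t_j)$ into the semigroup defect $(I-e^{(t-t_j)\Delta})y(t_j)$, the deterministic convolution, and the stochastic convolution, and bound these via \cref{eq:I-e^Delta-h1}, the contraction property \cref{eq:e^tDelta} with Cauchy--Schwarz, and the It\^o isometry respectively, before invoking the regularity estimate \cref{eq:S0-regu}. No substantive differences to report.
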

\begin{proof} 
  For any $ t_j < t < t_{j+1} $ with $ 0 \leqslant j < J $, by
  \cref{eq:S0-mild-form} we have
  \[
    y(t) - e^{(t-t_j)\Delta}y(t_j) =
    \int_{t_j}^t e^{(s-t_j)\Delta} g(s) \, \mathrm{d}s +
    \int_{t_j}^t e^{(s-t_j)\Delta} y(s) \, \mathrm{d}W(s),
  \]
  and so 
  \begin{align*} 
    & \nm{
      y(t) - e^{(t-t_j)\Delta}y(t_j)
    }_{L^2(\Omega;L^2(\mathcal O))}^2 \\
    \leqslant{} &
    2\Nm{
      \int_{t_j}^t e^{(s-t_j)\Delta} g(s) \, \mathrm{d}s
    }_{L^2(\Omega;L^2(\mathcal O))}^2 +
    2\Nm{
      \int_{t_j}^t e^{(s-t_j)\Delta} y(s) \, \mathrm{d}W(s)
    }_{L^2(\Omega;L^2(\mathcal O))}^2 \\
    ={} &
    2\Nm{
      \int_{t_j}^t e^{(s-t_j)\Delta} g(s) \, \mathrm{d}s
    }_{L^2(\Omega;L^2(\mathcal O))}^2 +
    2 \int_{t_j}^t \nm{e^{(s-t_j)\Delta} y(s)}_{
      L^2(\Omega;L^2(\mathcal O))
    }^2 \, \mathrm{d}s \\
    \leqslant{} &
    2 \tau \int_{t_j}^t
    \nm{e^{(s-t_j)\Delta} g(s)}_{L^2(\Omega;L^2(\mathcal O))} ^2
    \, \mathrm{d}s +
    2 \int_{t_j}^t \nm{e^{(s-t_j)\Delta}y(s)}_{L^2(\Omega;L^2(\mathcal O))}^2
    \, \mathrm{d}s \\
    \leqslant{} &
    2 \tau \int_{t_j}^t
    \nm{g(s)}_{L^2(\Omega;L^2(\mathcal O))} ^2
    \, \mathrm{d}s +
    2 \int_{t_j}^t \nm{y(s)}_{L^2(\Omega;L^2(\mathcal O))}^2
    \, \mathrm{d}s \quad\text{(by \cref{eq:e^tDelta}).}
  \end{align*}
  It follows that
  \begin{align} 
    & \int_{t_j}^{t_{j+1}}
    \nm{y(t) - e^{(t-t_j)\Delta} y(t_j)}_{
      L^2(\Omega;L^2(\mathcal O))
    }^2 \, \mathrm{d}t \notag \\
    \leqslant{} &
    2 \int_{t_j}^{t_{j+1}}
    \int_{t_j}^t \tau \nm{g(s)}_{L^2(\Omega;L^2(\mathcal O))}^2 +
    \nm{y(s)}_{L^2(\Omega;L^2(\mathcal O))}^2 \, \mathrm{d}s
    \, \mathrm{d}t \notag \\
    ={} &
    2 \int_{t_j}^{t_{j+1}} (t_{j+1}-s) \big(
      \tau \nm{g(s)}_{L^2(\Omega;L^2(\mathcal O))} ^2 +
      \nm{y(s)}_{L^2(\Omega;L^2(\mathcal O))}^2
    \big) \, \mathrm{d}s \notag \\
    \leqslant{} &
    2\tau^2 \nm{g}_{
      L^2(\Omega;L^2(t_j,t_{j+1};L^2(\mathcal O)))
    }^2 + 2\tau \nm{y}_{
      L^2(\Omega;L^2(t_j,t_{j+1};L^2(\mathcal O)))
    }^2. \label{eq:y-yj-1}
  \end{align}
  We also have
  \begin{align}
    & \int_{t_j}^{t_{j+1}}
    \nm{(I-e^{(t-t_j)\Delta})y(t_j)}_{
      L^2(\Omega;L^2(\mathcal O))
    }^2 \, \mathrm{d}t \notag \\
    \lesssim{} &
    \tau \int_{t_j}^{t_{j+1}}
    \nm{y(t_j)}_{L^2(\Omega;\dot H^1(\mathcal O))}^2 \, \mathrm{d}t
    \quad\text{(by \cref{eq:I-e^Delta-h1})} \notag \\
    \lesssim{} &
    \tau^2 \nm{y}_{L^2(\Omega;C([0,T];\dot H^1(\mathcal O)))}^2.
    \label{eq:y-yj-2}
  \end{align}
  Combining \cref{eq:y-yj-1,eq:y-yj-2} yields
  \begin{align*}
    & \sum_{j=0}^{J-1} \nm{y-y(t_j)}_{
      L^2(\Omega;L^2(t_j,t_{j+1};L^2(\mathcal O)))
    }^2 \\
    \lesssim{} &
    \tau^2 \nm{g}_{L^2(\Omega;L^2(0,T;L^2(\mathcal O)))}^2 +
    \tau \nm{y}_{L^2(\Omega;L^2(0,T;L^2(\mathcal O)))}^2 \\
    & \quad {} + \tau \nm{y}_{
      L^2(\Omega;C([0,T];\dot H^1(\mathcal O)))
    }^2,
  \end{align*}
  so that \cref{eq:y-yj} follows from \cref{eq:S0-regu}. This completes the
  proof.
\end{proof}

\begin{lemma}
  \label{lem:wtY}
  Assume that $ g \in L_\mathbb F^2(\Omega;L^2(0,T;L^2(\mathcal O))) $. Define $
  \widetilde Y \in \mathcal X_{h,\tau} $ by
  \begin{small}
  \begin{equation} 
    \label{eq:wtYj-def}
    \begin{cases}
      \widetilde Y_{j+1} = \widetilde Y_j +
      \tau\Delta_h \widetilde Y_{j+1} + Q_h \Big(
        \int_{t_j}^{t_{j+1}}  g(t) \, \mathrm{d}t +
        \int_{t_j}^{t_{j+1}} y(t) \, \mathrm{d}W(t)
      \Big), \,\, 0 \leqslant j < J, \\
      \widetilde Y_0 = 0,
    \end{cases}
  \end{equation}
  \end{small}
  where $ y := S_0g $. Then
  \begin{equation} 
    \label{eq:y-wtY}
    \nm{y - \widetilde Y}_{
      L^2(\Omega;L^2(0,T;L^2(\mathcal O)))
    } \lesssim (\tau^{1/2}+h^2) \nm{g}_{
      L^2(\Omega;L^2(0,T;L^2(\mathcal O)))
    }.
  \end{equation}
\end{lemma}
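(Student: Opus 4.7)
The plan is to decompose the error interval-by-interval by inserting the nodal value $y(t_j)$. Using the convention that $\widetilde Y(t) = \widetilde Y_j$ on $[t_j,t_{j+1})$, write
\[
  y(t) - \widetilde Y_j = \bigl(y(t) - y(t_j)\bigr) + \bigl(y(t_j) - \widetilde Y_j\bigr).
\]
Squaring and summing, the first piece is handled directly by \cref{lem:y-yj}, which already produces a bound of order $\tau \|g\|^2$; this is the only contribution that is merely $O(\tau)$ and is what forces the $\tau^{1/2}$ in \cref{eq:y-wtY}. So the substantive step is to show that the nodal error satisfies $\sum_{j=1}^{J-1} \tau\,\mathbb E\|y(t_j) - \widetilde Y_j\|_{L^2(\mathcal O)}^2 \lesssim (\tau+h^2)^2 \|g\|^2$.

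For the nodal error I would use the mild representation \cref{eq:S0-mild-form-2},
\[
  y(t_j) = \int_0^{t_j} e^{(t_j-r)\Delta} g(r)\,\mathrm dr + \int_0^{t_j} e^{(t_j-r)\Delta} y(r)\,\mathrm dW(r),
\]
and unroll the backward Euler recursion \cref{eq:wtYj-def} with $\widetilde Y_0 = 0$ to get the closed form
\[
  \widetilde Y_j = \sum_{k=0}^{j-1}(I-\tau\Delta_h)^{-(j-k)} Q_h \!\int_{t_k}^{t_{k+1}}\! g(r)\,\mathrm dr + \sum_{k=0}^{j-1}(I-\tau\Delta_h)^{-(j-k)} Q_h \!\int_{t_k}^{t_{k+1}}\! y(r)\,\mathrm dW(r).
\]
Subtracting gives $y(t_j) - \widetilde Y_j = D_j + S_j$, where $D_j$ is the drift error with integrand $\bigl(e^{(t_j-r)\Delta} - (I-\tau\Delta_h)^{-(j-k)} Q_h\bigr)g(r)$ and $S_j$ is the stochastic error with the same operator applied to $y(r)$ in an Itô integral.

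I would then treat $D_j$ and $S_j$ separately. After the index shift $j \mapsto j+1$, $\sum_j \tau\,\mathbb E\|D_j\|_{L^2(\mathcal O)}^2$ is exactly the quantity estimated in \cref{lem:dg-conv-inf}, applied path-by-path in $\omega$ and combined with Fubini; this yields an $(\tau+h^2)^2 \|g\|_{L^2(\Omega;L^2(0,T;L^2(\mathcal O)))}^2$ bound. For $S_j$, Itô's isometry gives
\[
  \mathbb E\|S_j\|_{L^2(\mathcal O)}^2 = \sum_{k=0}^{j-1}\int_{t_k}^{t_{k+1}} \mathbb E\bigl\|\bigl(e^{(t_j-r)\Delta} - (I-\tau\Delta_h)^{-(j-k)} Q_h\bigr) y(r)\bigr\|_{L^2(\mathcal O)}^2\,\mathrm dr,
\]
and each operator norm in $\mathcal L(\dot H^2(\mathcal O),L^2(\mathcal O))$ is $\lesssim \tau + h^2$ by \cref{lem:zq} (again with a shift of one). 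Hence $\mathbb E\|S_j\|_{L^2(\mathcal O)}^2 \lesssim (\tau+h^2)^2 \|y\|_{L^2(\Omega;L^2(0,t_j;\dot H^2(\mathcal O)))}^2$, and summing with the $\tau$ weight telescopes to a factor $T$. Finally, the regularity estimate \cref{eq:S0-regu} with $\beta = 0$ converts $\|y\|_{L^2(\Omega;L^2(0,T;\dot H^2(\mathcal O)))}$ into $\|g\|_{L^2(\Omega;L^2(0,T;L^2(\mathcal O)))}$, completing the bound.

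The main obstacle is book-keeping rather than analysis: one has to match the exponent $j-k+1$ appearing in \cref{lem:dg-conv-inf} and \cref{lem:zq} with the exponent $j-k$ that naturally arises when evaluating at the left endpoint $t_j$, and to keep the deterministic and stochastic pieces cleanly separated before invoking Itô's isometry. Once those are in place, the pieces coming from $D_j$ and $S_j$ are both $O((\tau+h^2)^2)$, so their square root is already better than $\tau^{1/2} + h^2$, and the rate is entirely governed by the temporal regularity term \cref{lem:y-yj}.
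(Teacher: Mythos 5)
Your proposal is correct and follows essentially the same route as the paper: split off the temporal piece via \cref{lem:y-yj}, unroll the recursion to a discrete mild form, compare with \cref{eq:S0-mild-form-2}, bound the drift error by \cref{lem:dg-conv-inf} and the stochastic error by It\^o's isometry together with the operator bound of \cref{lem:zq}, and close with \cref{eq:S0-regu}. The only cosmetic difference is that the paper indexes the recursion at $t_{j+1}$ so the exponent $j-k+1$ appears directly, whereas you evaluate at $t_j$ and shift afterwards; both are fine.
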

\begin{proof} 
  Since
  \begin{align*} 
    & \nm{y - \widetilde Y}_{
      L^2(\Omega;L^2(0,T;L^2(\mathcal O)))
    }^2 \\
    \leqslant{} &
    2\sum_{j=0}^{J-1} \nm{y - y(t_j)}_{
      L^2(\Omega;L^2(t_j,t_{j+1}; L^2(\mathcal O)))
    }^2 + 2\tau \sum_{j=0}^{J-1}
    \nm{y(t_j) - \widetilde Y_j}_{L^2(\Omega;L^2(\mathcal O))}^2 \\
    \lesssim{} &
    \tau \nm{g}_{L^2(\Omega;L^2(0,T;L^2(\mathcal O)))}^2 +
    \tau \sum_{j=0}^{J-1} \nm{y(t_j) - \widetilde Y_j}_{
      L^2(\Omega;L^2(\mathcal O))
    }^2  \quad\text{(by \cref{lem:y-yj})},
  \end{align*}
  it suffices to prove
  \begin{equation} 
    \label{eq:yj-wtYj}
    \tau \sum_{j=0}^{J-1} \nm{y(t_j) - \widetilde Y_j}_{
      L^2(\Omega;L^2(\mathcal O))
    }^2 \lesssim (\tau + h^2)^2 \nm{g}_{
      L^2(\Omega;L^2(0,T;L^2(\mathcal O)))
    }^2.
  \end{equation}

  To this end, we proceed as follows. An inductive argument gives, for any $ 0
  \leqslant j < J $,
  \begin{equation} 
    \label{eq:wtYj}
    \widetilde Y_{j+1} = \sum_{k=0}^j (I-\tau \Delta_h)^{-(j-k+1)} Q_h
    \Big(
      \int_{t_k}^{t_{k+1}} g(t) \, \mathrm{d}t +
      \int_{t_k}^{t_{k+1}} y(t) \, \mathrm{d}W(t)
    \Big),
  \end{equation}
  and so from \cref{eq:S0-mild-form-2} we conclude that
  \begin{align} 
    & y(t_{j+1}) - \widetilde Y_{j+1} \notag \\
    ={} &
    \sum_{k=0}^j \int_{t_k}^{t_{k+1}}
    \big(
      e^{(t_{j+1}-t)\Delta} - (I-\tau\Delta_h)^{-(j-k+1)} Q_h
    \big) g(t) \, \mathrm{d}t \notag \\
    & \quad {} + \sum_{k=0}^j \int_{t_k}^{t_{k+1}} \big(
      e^{(t_{j+1}-t)\Delta} - (I-\tau\Delta_h)^{-(j-k+1)}
    \big) y(t) \, \mathrm{d}W(t) \notag \\
    =:{} &
    I_j + II_j \label{eq:S0-mild-formg-wtY}
  \end{align}
  for each $ 0 \leqslant j < J $. By \cref{lem:dg-conv-inf} we have
  \begin{equation} 
    \label{eq:II1}
    \tau \sum_{j=0}^{J-2} \nm{I_{j}}_{L^2(\Omega;L^2(\mathcal O))}^2
    \lesssim (\tau + h^2)^2 \nm{g}_{L^2(\Omega;L^2(0,T;L^2(\mathcal O)))}^2.
  \end{equation}
  For any $ 0 \leqslant j < J $, since
  \begin{align*} 
    & \Nm{
      \int_{t_k}^{t_{k+1}} \big(
        e^{(t_{j+1} - t)\Delta} -
        (I-\tau\Delta_h)^{-(j-k+1)} Q_h
      \big) y(t) \, \mathrm{d}W(t)
    }_{L^2(\Omega;L^2(\mathcal O))}^2 \\
    ={} &
    \int_{t_k}^{t_{k+1}} \big\|
      \big(
        e^{(t_{j+1}-t)\Delta} - (I-\tau\Delta_h)^{-(j-k+1)} Q_h
      \big) y(t)
    \big\|_{L^2(\Omega;L^2(\mathcal O))}^2 \, \mathrm{d}t \\
    \leqslant{} &
    \int_{t_k}^{t_{k+1}} \big\|
      \big(
        e^{(t_{j+1}-t)\Delta} - (I-\tau\Delta_h)^{-(j-k+1)} Q_h
      \big)
    \big\|_{\mathcal L(\dot H^2(\mathcal O), L^2(\mathcal O))}^2 \\
    & \qquad\qquad\qquad {} \times
    \nm{y(t)}_{L^2(\Omega;\dot H^2(\mathcal O))}^2 \, \mathrm{d}t \\
    \lesssim{} &
    (\tau + h^2)^2 \nm{ y(t) }_{
      L^2(\Omega;L^2(t_k,t_{k+1};\dot H^2(\mathcal O)))
    }^2 \quad\text{(by \cref{lem:zq}),}
  \end{align*}
  we get
  \begin{align*} 
    & \nm{II_{j}}_{L^2(\Omega;L^2(\mathcal O))}^2 \\
    ={} &
    \sum_{k=0}^j \Nm{
      \int_{t_k}^{t_{k+1}} \big(
        e^{(t_{j+1}-t)\Delta} - (I-\tau\Delta_h)^{-(j-k+1)} Q_h
      \big) y(t) \, \mathrm{d}W(t)
    }_{L^2(\Omega;L^2(\mathcal O))}^2 \\
    \lesssim{} &
    (\tau + h^2)^2 \nm{y}_{
      L^2(\Omega;L^2(0,t_{j+1};\dot H^2(\mathcal O)))
    }^2.
  \end{align*}
  It follows that
  \begin{align}
    \tau \sum_{j=0}^{J-2}
    \nm{II_{j}}_{L^2(\Omega;L^2(\mathcal O))}^2
    & \lesssim
    (\tau+h^2)^2 \nm{y}_{
      L^2(\Omega;L^2(0,T;\dot H^2(\mathcal O)))
    }^2 \notag \\
    & \lesssim
    (\tau+h^2)^2 \nm{g}_{
      L^2(\Omega;L^2(0,T;L^2(\mathcal O)))
    }^2 \quad\text{(by \cref{eq:S0-regu}).}
    \label{eq:II2}
  \end{align}
  Combining \cref{eq:S0-mild-formg-wtY,eq:II1,eq:II2} yields
  \begin{align*} 
    & \tau\sum_{j=0}^{J-2} \nm{
      y(t_{j+1}) - \widetilde Y_{j+1}
    }_{L^2(\Omega;L^2(\mathcal O))}^2 \\
    \lesssim{} &
    \tau \sum_{j=0}^{J-2} \big(
      \nm{I_{j}}_{L^2(\Omega;L^2(\mathcal O))}^2 +
      \nm{II_{j}}_{L^2(\Omega;L^2(\mathcal O))}^2
    \big) \\
    \lesssim{} &
    (\tau + h^2)^2 \nm{g}_{L^2(\Omega;L^2(0,T;L^2(\mathcal O)))}^2.
  \end{align*}
  Therefore, \cref{eq:yj-wtYj} follows from the fact that $ y(0) = \widetilde
  Y_0 = 0 $. This completes the proof.
\end{proof}

Finally, we are in a position to prove \cref{thm:conv-S0} as follows.

\medskip\noindent{\bf Proof of \cref{thm:conv-S0}}. Let
  \[
    y := S_0 g, \quad Y := \mathcal S_0 g, \quad
    E := Y - \widetilde Y,
  \]
  where $ \widetilde Y $ is defined by \cref{eq:wtYj-def}. From
  \cref{eq:calS0,eq:wtYj-def} we get
  \begin{align*} 
    \begin{cases}
      E_{j+1} = E_j + \tau \Delta_h E_{j+1} + E_j \delta W_j +
      \int_{t_j}^{t_{j+1}} (\widetilde Y- Q_h y)(t) \, \mathrm{d}W(t),
      \quad 0 \leqslant j < J, \\
      E_0 = 0,
    \end{cases}
  \end{align*}
  so that an induction argument gives
  \begin{equation}
    E_{j+1} = \sum_{k=0}^j (I-\tau \Delta_h)^{-(j-k+1)}
    (E_k \delta W_k + \eta_k)
  \end{equation}
  for all $ 0 \leqslant j < J $, where
  \[ 
    \eta_k := \int_{t_k}^{t_{k+1}}
    (\widetilde Y - Q_h y)(t) \, \mathrm{d}W(t),
    \quad 0 \leqslant k < J-1.
  \]
  It follows that, for any $ 0 \leqslant j < J $,
  \begin{small}
  \begin{align*}
    & \nm{E_{j+1}}_{L^2(\Omega;L^2(\mathcal O))}^2 \\
    \leqslant{} &
    2 \Nm{
      \sum_{k=0}^j (I-\tau\Delta_h)^{-(j-k+1)} E_k \delta W_k
    }_{L^2(\Omega;L^2(\mathcal O))}^2 +
    2 \Nm{
      \sum_{k=0}^j (I-\tau\Delta_h)^{-(j-k+1)} \eta_k
    }_{L^2(\Omega;L^2(\mathcal O))}^2 \\
    \leqslant{} &
    2 \tau \sum_{k=0}^j \nm{E_k}_{L^2(\Omega;L^2(\mathcal O))}^2 +
    2 \nm{y - \widetilde Y}_{L^2(\Omega;L^2(0,t_{j+1};L^2(\mathcal O)))}^2,
  \end{align*}
  \end{small}
  by the following two straightforward estimates:
  \begin{align*}
    & \Nm{
      \sum_{k=0}^j (I-\tau\Delta_h)^{-(j-k+1)}
      E_k \delta W_k
    }_{L^2(\Omega;L^2(\mathcal O))}^2 \\
    ={} &
    \sum_{k=0}^j \nm{
      (I-\tau\Delta_h)^{-(j-k+1)} E_k \delta W_k
    }_{L^2(\Omega;L^2(\mathcal O))}^2 \\
    ={} &
    \tau \sum_{k=0}^j \nm{
      (I-\tau\Delta_h)^{-(j-k+1)} E_k
    }_{L^2(\Omega;L^2(\mathcal O))}^2 \\
    \leqslant{} &
    \tau \sum_{k=0}^j \nm{ E_k }_{L^2(\Omega;L^2(\mathcal O))}^2
  \end{align*}
  and
  \begin{align*}
    & \Nm{
      \sum_{k=0}^j(I-\tau\Delta_h)^{-(j-k+1)}
      \eta_k
    }_{L^2(\Omega;L^2(\mathcal O))}^2 \\
    ={} &
    \sum_{k=0}^j \Nm{
      \int_{t_k}^{t_{k+1}} (I-\tau\Delta_h)^{-(j-k+1)}
      (\widetilde Y - Q_hy) \, \mathrm{d}W(t)
    }_{L^2(\Omega;L^2(\mathcal O))}^2 \\
    ={} &
    \sum_{k=0}^j \nm{
      (I-\tau\Delta_h)^{-(j-k+1)} (\widetilde Y - Q_h y)
    }_{L^2(\Omega;L^2(t_k,t_{k+1};L^2(\mathcal O)))}^2 \\
    \leqslant{} &
    \sum_{k=0}^j \nm{
      \widetilde Y - Q_h y
    }_{L^2(\Omega;L^2(t_k,t_{k+1};L^2(\mathcal O)))}^2 \\
    ={} &
    \nm{\widetilde Y - Q_hy}_{L^2(\Omega;L^2(0,t_{j+1};L^2(\mathcal O)))}^2 \\
    \leqslant{} &
    \nm{y-\widetilde Y}_{L^2(\Omega;L^2(0,t_{j+1};L^2(\mathcal O)))}^2.
  \end{align*}
  Hence, applying the discrete Gronwall's inequality yields, by the fact $ E_0 =
  0 $, that
  \[
    \max_{0 \leqslant j \leqslant J}
    \nm{E_j}_{L^2(\Omega;L^2(\mathcal O))}
    \lesssim \nm{y - \widetilde Y}_{
      L^2(\Omega;L^2(0,T;L^2(\mathcal O)))
    },
  \]
  which implies
  \[
    \nm{E}_{L^2(\Omega;L^2(0,T;L^2(\mathcal O)))}
    \lesssim \nm{y-\widetilde Y}_{
      L^2(\Omega;L^2(0,T;L^2(\mathcal O)))
    }.
  \]
  Therefore,
  \begin{align*} 
    \nm{y-Y}_{
      L^2(\Omega;L^2(0,T;L^2(\mathcal O)))
    } & \leqslant
    \nm{y-\widetilde Y}_{
      L^2(\Omega;L^2(0,T;L^2(\mathcal O)))
    } + \nm{E}_{
      L^2(\Omega;L^2(0,T;L^2(\mathcal O)))
    } \\
    & \lesssim
    \nm{y-\widetilde Y}_{
      L^2(\Omega;L^2(0,T;L^2(\mathcal O)))
    } \\
    & \lesssim
    (\tau^{1/2} + h^2) \nm{g}_{
      L^2(\Omega;L^2(0,T;L^2(\mathcal O)))
    } \quad\text{(by \cref{eq:y-wtY}).}
  \end{align*}
  This proves \cref{eq:S0-conv} and thus concludes the proof.
\hfill\ensuremath{\blacksquare}


\subsection{Discretization of a backward stochastic parabolic equation}
\label{ssec:sbde}
This subsection considers a discretization of the following backward stochastic
parabolic equation:
\begin{equation} 
  \label{eq:first_p}
  \begin{cases}
    \mathrm{d} p(t) =
    -( \Delta p + g )(t) \mathrm{d}t + z(t) \mathrm{d}W(t),
    \quad 0 \leqslant t \leqslant T, \\
    p(T) = 0,
  \end{cases}
\end{equation}
where $ g \in L_{\mathbb F}^2(\Omega;L^2(0,T;L^2(\mathcal O))) $ is given. We
summarize some standard results of the above equation as follows:
\begin{align}
  & p(t) = \int_t^T e^{(s-t)\Delta} g(s) \, \mathrm{d}s -
  \int_t^T e^{(s-t)\Delta} z(s) \, \mathrm{d}W(s), \quad
  0 \leqslant t \leqslant T,
  \label{eq:first_p_int} \\
  & p(t) = \int_t^T (\Delta p + g)(t) \, \mathrm{d}t -
  \int_t^T z(t) \, \mathrm{d}W(t),
  \quad 0 \leqslant t \leqslant T,
  \label{eq:first_p_strong} \\
  & \nm{p}_{
    L^2(\Omega;C([0,T];\dot H^1(\mathcal O)))
  } +
  \nm{p}_{
    L^2(\Omega;L^2(0,T;\dot H^2(\mathcal O)))
  } + {} \notag \\
  & \qquad \nm{z}_{
    L^2(\Omega;L^2(0,T;\dot H^1(\mathcal O)))
  } \leqslant C
  \nm{g}_{L^2(\Omega;L^2(0,T;L^2(\mathcal O)))},
  \label{eq:p-h2}
\end{align}
where $ C $ is a positive constant depending only on $ T $. The discretization
seeks $ P \in \mathcal X_{h,\tau} $ and $ Z \in L_\mathbb F^2(\Omega; L^2(0,
T;\mathcal V_h)) $ such that
\begin{equation}
  \label{eq:first-P}
  \begin{cases}
    P_j \!-\! P_{j+1} =
    \tau \Delta_h P_j + \int_{t_j}^{t_{j+1}} Q_h g(t) \,
    \mathrm{d}t - \int_{t_j}^{t_{j+1}} Z(t) \, \mathrm{d}W(t),
    \, 0 \leqslant j < J, \\
    P_J = 0.
  \end{cases}
\end{equation}
\begin{remark}
  The inspiration for discretization \cref{eq:first-P} is as follows. Since $ z
  $ in \cref{eq:first_p} is of low temporal regularity, for the moment we are
  unable to derive any appropriate convergence rate of $ Z $ if $ z $ is
  discretized in time; however, the convergence rate of $ Z $ is essential for
  our numerical analysis of the adjoint equation of problem \cref{eq:model}.
  Hence, in discretization \cref{eq:first-P}, $ z $ is only discretized in
  space.
\end{remark}

To analyze the convergence of discretization \cref{eq:first-P}, we first analyze
the convergence of the following spatial semidiscretization of equation
\cref{eq:first_p}:
\begin{equation} 
  \label{eq:first_ph}
  \begin{cases}
    \mathrm{d} p_h(t) = -(\Delta_h p_h + Q_h g)(t)
    \mathrm{d}t + z_h(t) \mathrm{d}W(t),
    \quad 0 \leqslant t \leqslant T, \\
    p_h(T) = 0.
  \end{cases}
\end{equation}
We summarize some standard properties of the above equation as follows. For any
$ \beta \in \mathbb R $, we have
\begin{equation} 
  \label{eq:ph-regu}
  \begin{aligned}
    & \nm{p_h}_{
      L^2(\Omega;C([0,T];\dot H_h^{\beta+1}(\mathcal O)))
    } +
    \nm{p_h}_{
      L^2(\Omega;L^2(0,T;\dot H_h^{\beta+2}(\mathcal O)))
    } \\
    & \quad {} + \nm{z_h}_{
      L^2(\Omega;L^2(0,T;\dot H_h^{\beta+1}(\mathcal O)))
    } \leqslant C
    \nm{Q_hg}_{L^2(\Omega;L^2(0,T;\dot H_h^\beta(\mathcal O)))},
  \end{aligned}
\end{equation}
where $ C $ is a positive constant depending only on $ T $.
For any $ 0 \leqslant s \leqslant t \leqslant T $,
\begin{equation} 
  \label{eq:ph-mild-form}
  p_h(s) - e^{(t-s)\Delta_h} p_h(t) =
  \int_s^{t} e^{(r-s)\Delta_h} Q_h g(r) \, \mathrm{d}r -
  \int_s^{t} e^{(r-s)\Delta_h} z_h(r) \, \mathrm{d}W(r),
\end{equation}
and, in particular,
\begin{equation} 
  \label{eq:first_ph_int}
  p_h(t) = \int_t^T e^{(r-t)\Delta_h} Q_h g(r) \, \mathrm{d}r -
  \int_t^T e^{(r-t)\Delta_h} z_h(r) \, \mathrm{d}W(r).
\end{equation}

\begin{remark}
  The proofs of \cref{eq:first_p_int,eq:p-h2,eq:ph-regu,eq:ph-mild-form}
  are similar to that of \cref{eq:S1-S2-mild-form,eq:S1-S2-regu}.
\end{remark}



\begin{lemma}
  \label{lem:p-ph}
  Assume that $ g \in L_\mathbb F^2(\Omega;L^2(0,T;L^2(\mathcal O))) $. Let $ (p,
  z) $ be the solution of equation \cref{eq:first_p}, and let $ (p_h,z_h) $ be
  the solution of equation \cref{eq:first_ph}. Then
  \begin{equation} 
    \label{eq:p-ph}
    \begin{aligned}
      & \nm{p-p_h}_{
        L^2(\Omega;C([0,T]; \dot H^{\beta+1}(\mathcal O)))
      } + \nm{p-p_h}_{
        L^2(\Omega;L^2(0,T;\dot H^{\beta+2}(\mathcal O)))
      } \\
      & \qquad {} + \nm{z-z_h}_{
        L^2(\Omega;L^2(0,T;\dot H^{\beta+1}(\mathcal O)))
      } \lesssim h^{-\beta} \nm{g}_{
        L^2(\Omega;L^2(0,T;L^2(\mathcal O)))
      }
    \end{aligned}
  \end{equation}
  for all $ -2 \leqslant \beta \leqslant -1 $.
\end{lemma}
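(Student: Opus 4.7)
The plan is to exploit the explicit conditional-expectation representations that arise from the backward It\^o structure. Taking $\mathbb E_t$ of both sides of the mild forms \cref{eq:first_p_int,eq:first_ph_int} kills the stochastic integrals and yields
\[
  p(t) = \mathbb E_t \int_t^T e^{(s-t)\Delta} g(s) \, \mathrm{d}s,
  \quad
  p_h(t) = \mathbb E_t \int_t^T e^{(s-t)\Delta_h} Q_h g(s) \, \mathrm{d}s,
\]
and therefore $p(t) - p_h(t) = \mathbb E_t \int_t^T (e^{(s-t)\Delta} - e^{(s-t)\Delta_h} Q_h) g(s) \, \mathrm{d}s$. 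Using the conditional-expectation contraction $\nm{\mathbb E_t X}^2 \leq \mathbb E_t \nm{X}^2$, the $L^2(\Omega; L^2(0,T; \dot H^{\beta+2}))$-estimate for $p - p_h$ reduces to a pathwise semigroup-error bound delivered directly by \cref{lem:dg-spatial-g} for the endpoint $\beta = -2$, and by an analogous variant of its proof in a different spatial norm for $\beta = -1$.

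The $C_t$-in-time estimate and the bound on $e_z := z - z_h$ can then be obtained simultaneously by an It\^o-energy argument. Subtracting \cref{eq:first_ph} from \cref{eq:first_p} produces a BSPE for $(e_p, e_z) := (p - p_h, z - z_h)$. Applying It\^o to $\nm{e_p(t)}_{\dot H^{\beta+1}}^2$, integrating from $t$ to $T$, using $e_p(T) = 0$, and taking $\mathbb E_t$ yields
\[
  \nm{e_p(t)}_{\dot H^{\beta+1}}^2 + \mathbb E_t \int_t^T \nm{e_z(s)}_{\dot H^{\beta+1}}^2 \, \mathrm{d}s
  = 2\mathbb E_t \int_t^T \bigl(e_p, \Delta p - \Delta_h p_h + g - Q_h g\bigr)_{\dot H^{\beta+1}} \, \mathrm{d}s.
\]
A Cauchy--Schwarz step on the right-hand side, combined with the already-established pointwise-in-$t$ estimate on $e_p$ and the regularity bounds \cref{eq:p-h2,eq:ph-regu}, gives both $\sup_t \nm{e_p(t)}_{\dot H^{\beta+1}}$ (after a Doob-type step applied to the underlying backward martingale) and $\nm{e_z}_{L^2(\Omega; L^2(0,T; \dot H^{\beta+1}))}$.

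The main hurdle will be handling the mixed drift $\Delta p - \Delta_h p_h$ in negative-order norms, where neither term is individually bounded. The workaround is to insert the Ritz projection $R_h$ and split $\Delta p - \Delta_h p_h = \Delta(I - R_h) p + \Delta_h(R_h p - p_h)$; the identity $\Delta_h R_h = R_h \Delta$ on $\dot H^2$ then reduces the mixed drift to a pure projection error plus a discrete-PDE error, each bounded by classical Ritz-projection estimates together with the discrete stability \cref{eq:ph-regu}. A subsidiary subtlety is that the time-integrated spatial semigroup error acting on $z$, which has only $\dot H^1$ regularity by \cref{eq:p-h2}, is of critical scaling, and the appeal to \cref{lem:dg-spatial-g} is precisely what keeps the final estimate free of logarithmic losses. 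Only the two endpoints $\beta \in \{-2, -1\}$ need to be proved; the intermediate values follow by real interpolation between these.
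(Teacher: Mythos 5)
Your first step --- taking $\mathbb E_t$ of the mild forms to get $p(t)-p_h(t)=\mathbb E_t\int_t^T\big(e^{(s-t)\Delta}-e^{(s-t)\Delta_h}Q_h\big)g(s)\,\mathrm ds$ and then invoking the conditional-expectation contraction together with \cref{lem:dg-spatial-g} --- is correct and cleanly gives the $L^2(\Omega;L^2(0,T;\dot H^{\beta+2}(\mathcal O)))$ bound for $p-p_h$ at $\beta=-2$ (and at $\beta=-1$ granting the $\dot H^1$-variant you assert). This is a genuinely different, and for that particular piece arguably slicker, route than the paper's. The paper instead works throughout with the \emph{projected} error $e_h^p:=p_h-Q_hp$, $e_h^z:=z_h-Q_hz$, observes that $(e_h^p,e_h^z)$ solves the semidiscrete BSPDE \cref{eq:first_ph} with source $\Delta_h(Q_hp)-Q_h\Delta p=\Delta_h(Q_h-R_h)p$, and then reads off \emph{all} three estimates at once from the discrete stability \cref{eq:ph-regu} applied at the negative level $\beta$, plus standard projection errors for $p-Q_hp$ and $z-Q_hz$. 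The point of that choice is that the error equation is purely discrete: $\Delta$ and $\Delta_h$ never meet in the same drift.

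The genuine gap is in your It\^o-energy step for $e_z$ and the $C([0,T])$ norm, and it is concentrated at the endpoint $\beta=-2$. Your identity forces you to bound $\mathbb E\int_t^T(e_p,\Delta p-\Delta_h p_h+g-Q_hg)_{\dot H^{-1}(\mathcal O)}\,\mathrm ds$ by $h^4\nm{g}^2$. But Cauchy--Schwarz pairs $\nm{e_p}_{L^2(\Omega;L^2(0,T;L^2(\mathcal O)))}\lesssim h^2$ (the best you have from the first step) against the mixed drift, which in the relevant negative norm is only $O(1)$: $\nm{\Delta p}_{\dot H^{-2}(\mathcal O)}=\nm{p}_{L^2(\mathcal O)}$ and $\nm{\Delta_h p_h}_{\dot H^{-1}(\mathcal O)}\approx\nm{p_h}_{\dot H_h^1(\mathcal O)}$ are merely bounded, not small. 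This yields $\nm{e_z}_{L^2(\Omega;L^2(0,T;\dot H^{-1}(\mathcal O)))}\lesssim h$, half an order short of the claimed $h^2$. Your proposed repair does not close this: the commutation identity is $\Delta_hR_h=Q_h\Delta$ on $\dot H^2(\mathcal O)$, not $R_h\Delta$, and even after the splitting the term $(e_p,\Delta_h(R_hp-p_h))_{\dot H^{\beta+1}}$ involves the continuous fractional power $(-\Delta)^{\beta+1}$ acting against the discrete operator $\Delta_h$, for which ``classical Ritz-projection estimates'' give nothing directly. The fix is essentially to abandon $e_p=p-p_h$ as the energy variable and run the argument on $p_h-Q_hp$ (or $p_h-R_hp$), at which point you recover the paper's proof: the $z$-estimate then comes from the discrete regularity \cref{eq:ph-regu} applied to the source $\Delta_h(Q_h-R_h)p$, with $\nm{(Q_h-R_h)p}_{\dot H_h^{\beta+2}(\mathcal O)}\lesssim h^{-\beta}\nm{p}_{\dot H^2(\mathcal O)}$, and no pairing against $e_p$ is needed. (Your closing interpolation remark is fine but unnecessary; the argument is uniform in $\beta\in[-2,-1]$.)
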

\begin{proof} 
  Let
  \[
    e_h^p := p_h - Q_h p, \quad
    e_h^z := z_h - Q_h z.
  \]
  By \cref{eq:first_p_strong} we have
  \[ 
    Q_h p(t) = \int_t^T Q_h (\Delta p + g)(s) \, \mathrm{d}s -
    \int_t^T Q_h z(s) \, \mathrm{d}W(s),
    \quad 0 \leqslant t \leqslant T,
  \]
  so that
  \[
    \mathrm{d}(Q_h p)(t) = -Q_h(\Delta p + g)(t) \mathrm{d}t +
    Q_h z(t) \, \mathrm{d}W(t),
    \quad 0 \leqslant t \leqslant T.
  \]
  Hence, by \cref{eq:first_ph} we have
  \[
    \mathrm{d} e_h^p(t) = -\big(
      \Delta_h e_h^p + \Delta_h (Q_hp) - Q_h\Delta p
    \big)(t) \, \mathrm{d}t +
    e_h^z(t) \, \mathrm{d}W(t),
    \quad 0 \leqslant t \leqslant T.
  \]
  By the fact that $ e_h^p(T) = 0 $, we then deduce that $ (e_h^p, e_h^z) $ is
  the solution of equation \cref{eq:first_ph} with $ g $ replaced by $
  \Delta_h(Q_hp) - Q_h \Delta p $. It follows that
  \begin{small}
  \begin{align*} 
    & \nm{e_h^p}_{
      L^2(\Omega;C([0,T];\dot H_h^{\beta+1}(\mathcal O)))
    } + \nm{e_h^p}_{
      L^2(\Omega;L^2(0,T;\dot H_h^{\beta+2}(\mathcal O)))
    } + \nm{e_h^z}_{
      L^2(\Omega;L^2(0,T;\dot H_h^{\beta+1}(\mathcal O)))
    } \\
    \lesssim{} &
    \nm{\Delta_h(Q_h p) - Q_h\Delta p}_{
      L^2(\Omega;L^2(0,T;\dot H_h^{\beta}(\mathcal O)))
    } \quad\text{(by \cref{eq:ph-regu})}\\
    ={} &
    \nm{Q_hp - (\Delta_h)^{-1} Q_h\Delta p}_{
      L^2(\Omega;L^2(0,T;\dot H_h^{2+\beta}(\mathcal O)))
    } \\
    \lesssim{} &
    h^{-\beta} \nm{p}_{
      L^2(\Omega;L^2(0,T;\dot H^2(\mathcal O)))
    },
  \end{align*}
  \end{small}
  by the standard estimate
  \[
    \nm{Q_hv - (\Delta_h)^{-1} Q_h \Delta v}_{
      \dot H_h^{2+\beta}(\mathcal O)
    } \lesssim h^{-\beta} \nm{v}_{\dot H^2(\mathcal O)}
    \quad \forall v \in \dot H^2(\mathcal O).
  \]
  Hence, by \cref{eq:H-Hh-equiv} we get
  \begin{small}
  \begin{align*} 
    & \nm{e_h^p}_{
      L^2(\Omega;C([0,T];\dot H^{\beta+1}(\mathcal O)))
    } + \nm{e_h^p}_{
      L^2(\Omega;L^2(0,T;\dot H^{\beta+2}(\mathcal O)))
    } + \nm{e_h^z}_{
      L^2(\Omega;L^2(0,T;\dot H^{\beta+1}(\mathcal O)))
    } \\
    \lesssim{} &
    h^{-\beta} \nm{p}_{
      L^2(\Omega;L^2(0,T;\dot H^2(\mathcal O)))
    }.
  \end{align*}
  \end{small}
  On the other hand, we have the following standard estimate:
  \begin{align*} 
    & \nm{p-Q_hp}_{
      L^2(\Omega;C([0,T];\dot H^{\beta+1}(\mathcal O)))
    } + \nm{p-Q_hp}_{
      L^2(\Omega;L^2(0,T;\dot H^{\beta+2}(\mathcal O)))
    } \\
    & \qquad {} + \nm{z-Q_hz}_{
      L^2(\Omega;L^2(0,T;\dot H^{\beta+1}(\mathcal O)))
    } \\
    \lesssim{} &
    h^{-\beta} \nm{p}_{
      L^2(\Omega;C([0,T];\dot H^{1}(\mathcal O)))
    } + h^{-\beta} \nm{p}_{
      L^2(\Omega;L^2(0,T;\dot H^2(\mathcal O)))
    } \\
    & \qquad {} + h^{-\beta} \nm{z}_{
      L^2(\Omega;L^2(0,T;\dot H^1(\mathcal O)))
    }.
  \end{align*}
  Finally, combining the above two estimates yields
  \begin{align*} 
    & \nm{p-p_h}_{
      L^2(\Omega;C([0,T];\dot H^{\beta+1}(\mathcal O)))
    } + \nm{p-p_h}_{
      L^2(\Omega;L^2(0,T;\dot H^{\beta+2}(\mathcal O)))
    } \\
    & \qquad {} + \nm{z-z_h}_{
      L^2(\Omega;L^2(0,T;\dot H^{\beta+1}(\mathcal O)))
    } \\
    \lesssim{} &
    h^{-\beta} \nm{p}_{
      L^2(\Omega;C([0,T];\dot H^{1}(\mathcal O)))
    } + h^{-\beta} \nm{p}_{
      L^2(\Omega;L^2(0,T;\dot H^2(\mathcal O)))
    } \\
    & \qquad {} + h^{-\beta} \nm{z}_{
      L^2(\Omega;L^2(0,T;\dot H^1(\mathcal O)))
    } \\
    \lesssim{} &
    h^{-\beta} \nm{g}_{L^2(\Omega;L^2(0,T;L^2(\mathcal O)))}
    \quad\text{(by \cref{eq:p-h2}).}
  \end{align*}
  This proves \cref{eq:p-ph} and thus concludes the proof.
\end{proof}

\begin{lemma}
  \label{lem:ph-P}
  Assume that $ g \in L_{\mathbb F}^2(\Omega;L^2(0,T;L^2(\mathcal O))) $. Let $
  (p_h,z_h) $ be the solution of equation \cref{eq:first_ph}, and let $ (P, Z) $
  be the solution of \cref{eq:first-P}. Then
  \begin{align} 
    & \max_{0 \leqslant j \leqslant J}
    \nm{p_h(t_j) - P_j}_{
      L^2(\Omega;L^2(0,T;\dot H_h^{-1}(\mathcal O)))
    } \lesssim \tau \nm{g}_{L^2(\Omega;L^2(0,T;L^2(\mathcal O)))},
    \label{eq:ph-P-inf} \\
    & \nm{p_h - P}_{L^2(\Omega;L^2(0,T;L^2(\mathcal O)))}
    \lesssim \tau^{1/2} \nm{g}_{
      L^2(\Omega;L^2(0,T;L^2(\mathcal O)))
    }, \label{eq:ph-P-l2} \\
    & \nm{z_h-Z}_{
      L^2(\Omega;L^2(0,T;\dot H_h^{-1}(\mathcal O)))
    } \lesssim \tau^{1/2}
    \nm{g}_{L^2(\Omega;L^2(0,T;L^2(\mathcal O)))}.
    \label{eq:zh-Z}
  \end{align}
\end{lemma}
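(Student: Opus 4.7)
The overall plan is to exploit the backward recursion together with the fact that both $P_j$ and $p_h(t_j)$ are $\mathcal F_{t_j}$-measurable, so as to ``de-randomize'' the error analysis and reduce it to the deterministic \cref{lem:lbj}. Iterating \eqref{eq:first-P} from $P_J=0$ and then applying $\mathbb E_{t_j}$ annihilates every future It\^o increment $\int_{t_k}^{t_{k+1}}Z\,\mathrm{d}W$, yielding
\[
P_j = \mathbb E_{t_j}\Big[\sum_{k=j}^{J-1}(I-\tau\Delta_h)^{-(k-j+1)}\int_{t_k}^{t_{k+1}} Q_h g(s)\,\mathrm{d}s\Big].
\]
The same argument applied to \eqref{eq:first_ph_int} gives $p_h(t_j)=\mathbb E_{t_j}\big[\int_{t_j}^T e^{(s-t_j)\Delta_h}Q_hg(s)\,\mathrm{d}s\big]$. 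Subtracting, $p_h(t_j)-P_j$ is $\mathbb E_{t_j}$ of a quantity that, for each fixed $\omega$, is exactly $\eta(t_j)-P_j^{\mathrm{det}}$ in the notation of \cref{lem:lbj} applied to $g(\cdot,\omega)$.

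With this identification, \eqref{eq:ph-P-inf} follows at once from Jensen's inequality and \eqref{eq:xx-1}. For \eqref{eq:ph-P-l2} I split, on each $[t_j,t_{j+1})$, $p_h(t)-P_j=(p_h(t)-p_h(t_j))+(p_h(t_j)-P_j)$. The first piece is handled by the strong form of \eqref{eq:first_ph} combined with It\^o isometry and the regularity \eqref{eq:ph-regu}, producing $\sum_j\int_{t_j}^{t_{j+1}}\mathbb E\nm{p_h(t)-p_h(t_j)}_{L^2(\mathcal O)}^2\,\mathrm{d}t\lesssim\tau\nm{g}^2$. The second piece requires promoting the pointwise-in-time quantity $\tau\sum_j\nm{\eta(t_j)-P_j^{\mathrm{det}}}_{L^2(\mathcal O)}^2$ to an integral form to which \eqref{eq:xx-2} applies; I insert and subtract $\eta(t)$, using the identity $\eta(t_j)-\eta(t)=\int_{t_j}^t e^{(s-t_j)\Delta_h}Q_hg(s)\,\mathrm{d}s+(e^{(t-t_j)\Delta_h}-I)\eta(t)$ together with \cref{lem:e^tDeltah} and standard analytic-semigroup smoothing to obtain $\sum_j\int_{t_j}^{t_{j+1}}\nm{\eta(t_j)-\eta(t)}_{L^2(\mathcal O)}^2\,\mathrm{d}t\lesssim\tau\nm{g}^2$.

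The estimate \eqref{eq:zh-Z} is the main obstacle, because $z_h$ has only $L^2$ time regularity and any direct energy-type analysis would demand more. The plan is to subtract \eqref{eq:first-P} from the $(t_j,t_{j+1})$-integrated form of \eqref{eq:first_ph} to obtain
\[
\int_{t_j}^{t_{j+1}}(z_h-Z)\,\mathrm{d}W = \epsilon_{j+1}^p-(I-\tau\Delta_h)\epsilon_j^p+R_j,
\]
where $\epsilon_j^p:=p_h(t_j)-P_j$ and $R_j:=\int_{t_j}^{t_{j+1}}\Delta_h(p_h(t)-p_h(t_j))\,\mathrm{d}t$. Applying $\mathbb E_{t_j}$ to the scheme identity forces $(I-\tau\Delta_h)\epsilon_j^p=\mathbb E_{t_j}[\epsilon_{j+1}^p+R_j]$, so the right-hand side above is exactly the pure-martingale part $(\epsilon_{j+1}^p-\mathbb E_{t_j}\epsilon_{j+1}^p)+(R_j-\mathbb E_{t_j}R_j)$. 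It\^o isometry in $\dot H_h^{-1}(\mathcal O)$ then bounds $\int_{t_j}^{t_{j+1}}\mathbb E\nm{z_h-Z}_{\dot H_h^{-1}(\mathcal O)}^2\,\mathrm{d}t$ by $2\mathbb E\nm{\epsilon_{j+1}^p}_{\dot H_h^{-1}(\mathcal O)}^2+2\mathbb E\nm{R_j}_{\dot H_h^{-1}(\mathcal O)}^2$; summed over $j$, the first term is controlled via \eqref{eq:ph-P-inf}, and the second via the pointwise estimate $\nm{R_j}_{\dot H_h^{-1}(\mathcal O)}\le\int_{t_j}^{t_{j+1}}\nm{p_h(t)-p_h(t_j)}_{\dot H_h^1(\mathcal O)}\,\mathrm{d}t$ combined with the continuous $\dot H^1$-regularity of $p_h$ afforded by \eqref{eq:ph-regu}.
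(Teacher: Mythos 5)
Your proposal is correct, and for \cref{eq:ph-P-inf,eq:ph-P-l2} it is essentially the paper's argument: the identities $P_j=\mathbb E_{t_j}\widetilde P_j$ and $p_h(t_j)=\mathbb E_{t_j}\eta(t_j)$ reduce everything to \cref{lem:lbj} via the contractivity of conditional expectation, and the splitting of $p_h(t)-P_j$ into a temporal-increment piece plus a deterministic-scheme piece mirrors the paper's decomposition $(I-\mathbb E_{t_j})p_h(t)+\mathbb E_{t_j}(\eta(t)-\widetilde P_j)$ (your variant needs the extra bound on $\eta(t)-\eta(t_j)$, which the paper sidesteps by applying \cref{eq:xx-2} directly to $\eta(t)-\widetilde P_j$; both work). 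Where you genuinely diverge is \cref{eq:zh-Z}. The paper stays with the mild formulation: it isolates $\int_{t_j}^{t_{j+1}}e^{(t-t_j)\Delta_h}z_h\,\mathrm dW$ as $(I-\mathbb E_{t_j})$ of known quantities and estimates three terms using the H\"older-type semigroup bounds \cref{eq:I-e^Deltah-h1,eq:I-e^Deltah-l2} together with the $\dot H_h^1$-regularity of $z_h$ from \cref{eq:ph-regu}. You instead subtract the integrated strong form from the scheme, use $\mathbb E_{t_j}$ to identify $(I-\tau\Delta_h)\epsilon_j^p=\mathbb E_{t_j}[\epsilon_{j+1}^p+R_j]$, and read off the stochastic integral as the pure martingale part $(I-\mathbb E_{t_j})(\epsilon_{j+1}^p+R_j)$; It\^o isometry in $\dot H_h^{-1}$ then needs only \cref{eq:ph-P-inf} (summed over $J$ steps, giving $J\tau^2\sim\tau$) and the $C([0,T];\dot H_h^1)$ bound on $p_h$ for $R_j$. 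Your route is more elementary in that it avoids the semigroup H\"older estimates and, notably, any regularity of $z_h$ beyond $L^2$; the paper's route keeps the argument entirely at the level of the mild solution and its operator calculus. Both yield the same $\tau^{1/2}$ rate, and all the intermediate bounds you invoke (discrete maximal regularity for $\eta$, the contraction property of $I-\mathbb E_{t_j}$, It\^o isometry in the $\dot H_h^{-1}$ norm) are legitimate.
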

\begin{proof}
  Let us first prove \cref{eq:ph-P-inf}. By \cref{eq:first_ph_int} we have that
  \begin{equation} 
    \label{eq:p-eta}
    p_h(t) = \mathbb E_t \eta(t), \quad 0 \leqslant t \leqslant T,
  \end{equation}
  where
  \[
    \eta(t) := \int_t^T e^{(s-t)\Delta_h} Q_h g(s) \, \mathrm{d}s,
    \quad 0 \leqslant t \leqslant T.
  \]
  Define $ \widetilde P \in \mathcal X_{h,\tau} $ by
  \begin{equation}
    \label{eq:2}
    \begin{cases}
      \widetilde P_j - \widetilde P_{j+1} =
      \tau \Delta_h \widetilde P_j +
      Q_h \int_{t_j}^{t_{j+1}} g(t) \, \mathrm{d}t,
      \quad 0 \leqslant j < J, \\
      \widetilde P_J = 0.
    \end{cases}
  \end{equation}
  By definition, it is evident that
  \begin{equation} 
    \label{eq:P-wtP}
    P_j = \mathbb E_{t_j} \widetilde P_j
    \quad\text{for all $ 0 \leqslant j \leqslant J $}.
  \end{equation}
  By \cref{eq:p-eta,eq:P-wtP} we obtain
  \[
    p_h(t_{j})-P_j = \mathbb E_{t_j}(\eta(t_{j}) - \widetilde P_j)
    \quad\text{for all $ 0 \leqslant j \leqslant J $}.
  \]
  so that
  \begin{align*} 
    \max_{0 \leqslant j \leqslant J}
    \nm{p_h(t_j) - P_j}_{
      L^2(\Omega;\dot H_h^{-1}(\mathcal O))
    } & \leqslant
    \max_{0 \leqslant j \leqslant J}
    \nm{\eta(t_j) - \widetilde P_j}_{
      L^2(\Omega;\dot H_h^{-1}(\mathcal O))
    }.
  \end{align*}
  Hence, the desired estimate \cref{eq:ph-P-inf} follows from \cref{eq:xx-1}.

  Then let us prove \cref{eq:ph-P-l2}. Let $ 0 \leqslant j < J $ be arbitrary
  but fixed. For any $ t_j < t \leqslant t_{j+1} $, since \cref{eq:ph-mild-form}
  implies
  \[
    p_h(t_j) - e^{(t-t_j)\Delta_h}p_h(t) =
    \int_{t_j}^t e^{(s-t_j)\Delta_h} Q_h g(s) \, \mathrm{d}s -
    \int_{t_j}^t e^{(s-t_j)\Delta_h} z_h(s) \, \mathrm{d}W(s),
  \]
  we have
  \begin{small}
  \begin{align*} 
    & \nm{
      p_h(t_j) - e^{(t-t_j)\Delta_h}p_h(t)
    }_{L^2(\Omega;L^2(\mathcal O))} \\
    \leqslant{} &
    \Big\|
    \int_{t_j}^t e^{(s-t_j)\Delta_h} Q_h g(s) \, \mathrm{d}s
    \Big\|_{L^2(\Omega;L^2(\mathcal O))} +
    \Big\|
    \int_{t_j}^t e^{(s-t_j)\Delta_h} z_h(s) \, \mathrm{d}W(s)
    \Big\|_{L^2(\Omega;L^2(\mathcal O))} \\
    \leqslant{} &
    \int_{t_j}^t \nm{
      e^{(s-t_j)\Delta_h} Q_h g(s)
    }_{L^2(\Omega;L^2(\mathcal O))}
    \, \mathrm{d}s +
    \Big(
      \int_{t_j}^t \nm{
        e^{(s-t_j)\Delta_h} z_h(s)
      }_{L^2(\Omega;L^2(\mathcal O))}^2 \, \mathrm{d}s
    \Big)^{1/2}  \\
    \leqslant{} &
    \int_{t_j}^t \nm{ Q_h g(s) }_{L^2(\Omega;L^2(\mathcal O))}
    \, \mathrm{d}s +
    \Big(
      \int_{t_j}^t \nm{ z_h(s) }_{
        L^2(\Omega;L^2(\mathcal O))
      }^2 \, \mathrm{d}s
    \Big)^{1/2} \quad\text{(by \cref{eq:e^tDeltah})} \\
    \leqslant{} &
    \int_{t_j}^t \nm{g(s)}_{L^2(\Omega;L^2(\mathcal O))}
    \, \mathrm{d}s +
    \Big(
      \int_{t_j}^t \nm{ z_h(s) }_{
        L^2(\Omega;L^2(\mathcal O))
      }^2 \, \mathrm{d}s
    \Big)^{1/2} \\
    \leqslant{} &
    \sqrt{t-t_j} \Big(
      \int_{t_j}^t \nm{g(s)}_{L^2(\Omega;L^2(\mathcal O))}^2
      \, \mathrm{d}s
    \Big)^{1/2} + \Big(
      \int_{t_j}^t \nm{ z_h(s) }_{
        L^2(\Omega;L^2(\mathcal O))
      }^2 \, \mathrm{d}s
    \Big)^{1/2}.
  \end{align*}
  \end{small}
  It follows that
  \begin{align*} 
    & \int_{t_j}^{t_{j+1}}
    \nm{
      p_h(t_j) - e^{(t-t_j)\Delta_h}p_h(t)
    }_{
      L^2(\Omega;L^2(\mathcal O))
    }^2 \, \mathrm{d}t \\
    \leqslant{} &
    2 \int_{t_j}^{t_{j+1}} \Big(
      (t-t_j) \int_{t_j}^t
      \nm{g(s)}_{L^2(\Omega;L^2(\mathcal O))}^2 \, \mathrm{d}s +
      \int_{t_j}^t \nm{z_h(s)}_{L^2(\Omega;L^2(\mathcal O))}^2
      \, \mathrm{d}s \,
    \Big) \mathrm{d}t \\
    \leqslant{} &
    \tau^2 \nm{g}_{
      L^2(\Omega;L^2(t_j,t_{j+1};L^2(\mathcal O)))
    }^2 + 2\tau \nm{z_h}_{
      L^2(\Omega;L^2(t_j,t_{j+1};L^2(\mathcal O)))
    }^2.
  \end{align*}
  In addition, by \cref{eq:I-e^Deltah-h2} we have
  \[ 
    \int_{t_j}^{t_{j+1}} \nm{
      (I-e^{(t-t_j)\Delta_h}) p_h(t)
    }_{L^2(\Omega;L^2(\mathcal O))}^2 \, \mathrm{d}t
    \leqslant \tau^2 \int_{t_j}^{t_{j+1}}
    \nm{p_h(t)}_{
      L^2(\Omega;\dot H_h^2(\mathcal O))
    }^2 \, \mathrm{d}t.
  \]
  Combining the above two estimates gives
  \begin{align*} 
    & \sum_{j=0}^{J-1} \int_{t_j}^{t_{j+1}}
    \nm{p_h(t_j) - p_h(t)}_{L^2(\Omega;L^2(\mathcal O))}^2
    \, \mathrm{d}t \\
    \lesssim{} &
    \tau^2 \nm{g}_{
      L^2(\Omega;L^2(0,T;L^2(\mathcal O)))
    }^2 + \tau \nm{z_h}_{
      L^2(\Omega;L^2(t_j,t_{j+1};L^2(\mathcal O)))
    }^2 \\
    & \quad {} + \tau^2 \nm{p_h}_{
      L^2(\Omega;L^2(0,T;\dot H_h^2(\mathcal O)))
    }^2 \\
    \lesssim{} &
    \tau \nm{g}_{L^2(\Omega;L^2(0,T;L^2(\mathcal O)))}^2
    \quad\text{(by \cref{eq:ph-regu}).}
  \end{align*}
  It follows that
  \begin{align} 
    & \sum_{j=0}^{J-1} \int_{t_j}^{t_{j+1}}
    \nm{(I-\mathbb E_{t_j})p_h(t)}_{
      L^2(\Omega;L^2(\mathcal O))
    }^2 \, \mathrm{d}t \notag \\
    \leqslant{} &
    \sum_{j=0}^{J-1} \int_{t_j}^{t_{j+1}}
    \nm{p_h(t)-p_h(t_j)}_{
      L^2(\Omega;L^2(\mathcal O))
    }^2 \, \mathrm{d}t \notag \\
    \lesssim{} &
    \tau \nm{g}_{L^2(\Omega;L^2(0,T;L^2(\mathcal O)))}^2.
    \label{eq:jm-1}
  \end{align}
  We also have
  \begin{align} 
    & \sum_{j=0}^{J-1} \int_{t_j}^{t_{j+1}}
    \nm{
      \mathbb E_{t_j}(p_h(t) - \widetilde P_j)
    }_{L^2(\Omega;L^2(\mathcal O))}^2 \notag \\
    ={}&
    \sum_{j=0}^{J-1} \int_{t_j}^{t_{j+1}}
    \nm{
      \mathbb E_{t_j}(\eta(t) - \widetilde P_j)
    }_{L^2(\Omega;L^2(\mathcal O))}^2
    \quad\text{(by \cref{eq:p-eta})} \notag \\
    \leqslant{} &
    \sum_{j=0}^{J-1} \int_{t_j}^{t_{j+1}}
    \nm{
      \eta(t) - \widetilde P_j
    }_{L^2(\Omega;L^2(\mathcal O))}^2 \notag \\
    \lesssim{} &
    \tau^2 \nm{g}_{
      L^2(\Omega;L^2(0,T;L^2(\mathcal O)))
    }^2 \quad\text{(by \cref{eq:xx-2}).}
    \label{eq:jm-2}
  \end{align}
  A straightforward computation gives
  \begin{align*} 
    & \nm{p_h - P}_{L^2(\Omega;L^2(0,T;L^2(\mathcal O)))}^2 \\
    ={} &
    \sum_{j=0}^{J-1} \int_{t_j}^{t_{j+1}}
    \nm{p_h(t) - P_j}_{L^2(\Omega;L^2(\mathcal O))}^2 \, \mathrm{d}t \\
    ={} &
    \sum_{j=0}^{J-1} \int_{t_j}^{t_{j+1}}
    \nm{
      p_h(t) - \mathbb E_{t_j} \widetilde P_j
    }_{L^2(\Omega;L^2(\mathcal O))}^2 \, \mathrm{d}t
    \quad\text{(by \cref{eq:P-wtP})} \\
    ={} &
    \sum_{j=0}^{J-1} \int_{t_j}^{t_{j+1}}
    \nm{
      (I-\mathbb E_{t_j})p_h(t) +
      \mathbb E_{t_j}(p_h(t) - \widetilde P_j)
    }_{L^2(\Omega;L^2(\mathcal O))}^2 \, \mathrm{d}t \\
    \leqslant{} &
    2\sum_{j=0}^{J-1} \int_{t_j}^{t_{j+1}}
    \nm{
      (I-\mathbb E_{t_j})p_h(t)
    }_{
      L^2(\Omega;L^2(\mathcal O))
    }^2 + \nm{
      \mathbb E_{t_j}(p_h(t) - \widetilde P_j)
    }_{
      L^2(\Omega;L^2(\mathcal O))
    }^2 \, \mathrm{d}t \\
    \lesssim{} &
    \tau \nm{g}_{L^2(\Omega;L^2(0,T;L^2(\mathcal O)))}^2
    \quad\text{(by \cref{eq:jm-1,eq:jm-2}).}
  \end{align*}
  This proves estimate \cref{eq:ph-P-l2}.

  Finally, we prove estimate \cref{eq:zh-Z}. Let $ 0 \leqslant j < J $ be
  arbitrary but fixed. By \cref{eq:ph-mild-form} we have
  \[ 
    p_h(t_j) - e^{\tau\Delta_h} p_h(t_{j+1}) =
    \int_{t_j}^{t_{j+1}} e^{(t-t_j)\Delta_h} Q_h g(t) \, \mathrm{d}t -
    \int_{t_j}^{t_{j+1}} e^{(t-t_j)\Delta_h} z_h(t) \, \mathrm{d}W(t),
  \]
  and so
  \begin{align*} 
    & \int_{t_j}^{t_{j+1}}
    e^{(t-t_j)\Delta_h} z_h(t) \mathrm{d}W(t) =
    (I-\mathbb E_{t_j}) \int_{t_j}^{t_{j+1}}
    e^{(t-t_j)\Delta_h} z_h(t) \mathrm{d}W(t) \\
    ={} &
    (I - \mathbb E_{t_j})\Big(
      e^{\tau\Delta_h} p_h(t_{j+1}) - p_h(t_j) +
      \int_{t_j}^{t_{j+1}}
      e^{(t-t_j)\Delta_h} Q_h g(t) \mathrm{d}t
    \Big) \\
    ={} &
    (I - \mathbb E_{t_j})\Big(
      e^{\tau\Delta_h} p_h(t_{j+1}) +
      \int_{t_j}^{t_{j+1}}
      e^{(t-t_j)\Delta_h} Q_h g(t) \mathrm{d}t
    \Big),
  \end{align*}
  by the fact that $ p_h(t_j) $ is $ \mathcal F_{t_j} $-measurable.
  It follows that
  \begin{align*} 
    & \int_{t_j}^{t_{j+1}} z_h(t) \, \mathrm{d}W(t) \\
    ={} &
    \int_{t_j}^{t_{j+1}} (I-e^{(t-t_j)\Delta_h}) z_h(t) \, \mathrm{d}W(t) +
    \int_{t_j}^{t_{j+1}} e^{(t-t_j)\Delta_h} z_h(t) \, \mathrm{d}W(t) \\
    ={} &
    \int_{t_j}^{t_{j+1}} (I-e^{(t-t_j)\Delta_h}) z_h(t) \, \mathrm{d}W(t) + {} \\
    & \quad (I-\mathbb E_{t_j}) \Big(
      e^{\tau\Delta_h}p_h(t_{j+1}) +
      \int_{t_j}^{t_{j+1}} e^{(t-t_j)\Delta_h} Q_h g(t) \, \mathrm{d}t
    \Big).
  \end{align*}
  Therefore, since \cref{eq:first-P} implies
  \begin{align*}
    \int_{t_j}^{t_{j+1}} Z(t) \, \mathrm{d}W(t) =
    (I-\mathbb E_{t_j}) \Big(
      P_{j+1} + \int_{t_j}^{t_{j+1}} Q_h g(t) \, \mathrm{d}t
    \Big),
  \end{align*}
  we obtain
  \begin{equation} 
    \label{eq:zh-Z-I1-I2-I3}
    \int_{t_j}^{t_{j+1}} (z_h - Z)(t) \, \mathrm{d}W(t) =
    \mathbb I_1 + \mathbb I_2 + \mathbb I_3,
  \end{equation}
  where
  \begin{align*} 
    \mathbb I_1 &:= \int_{t_j}^{t_{j+1}}
    \big( I - e^{(t-t_j)\Delta_h} \big) z_h(t) \, \mathrm{d}W(t), \\
    \mathbb I_2 &:= (I - \mathbb E_{t_j}) \big(
      e^{\tau\Delta_h}p_h(t_{j+1}) - P_{j+1}
    \big), \\
    \mathbb I_3 &:= (I-\mathbb E_{t_j}) \int_{t_j}^{t_{j+1}}
    \big( e^{(t-t_j)\Delta_h} - I \big) Q_h g(t) \, \mathrm{d}t.
  \end{align*}
  For $ \mathbb I_1 $ we have
  \begin{align*} 
    \nm{\mathbb I_1}_{L^2(\Omega;\dot H_h^{-1}(\mathcal O))}
    &= \Big(
      \int_{t_j}^{t_{j+1}} \Nm{
        (I-e^{(t-t_j)\Delta_h}) z_h(t)
      }_{L^2(\Omega;\dot H_h^{-1}(\mathcal O))}^2
      \, \mathrm{d}t
    \Big)^{1/2} \\
    &\leqslant
    \tau \nm{z_h}_{
      L^2(\Omega;L^2(t_j,t_{j+1};\dot H_h^{1}(\mathcal O)))
    } \quad\text{(by \cref{eq:I-e^Deltah-h1}).}
  \end{align*}
  For $ \mathbb I_2 $ we have
  \begin{align*} 
    & \nm{\mathbb I_2}_{L^2(\Omega;\dot H_h^{-1}(\mathcal O))}
    \leqslant \nm{
      e^{\tau\Delta_h}p_h(t_{j+1}) - P_{j+1}
    }_{L^2(\Omega;\dot H_h^{-1}(\mathcal O))} \\
    \leqslant{} &
    \nm{
      (I-e^{\tau\Delta_h})p_h(t_{j+1})
    }_{L^2(\Omega;\dot H_h^{-1}(\mathcal O))} +
    \nm{p_h(t_{j+1}) - P_{j+1}}_{L^2(\Omega;\dot H_h^{-1}(\mathcal O))} \\
    \lesssim{} &
    \tau \nm{p_h(t_{j+1})}_{L^2(\Omega;\dot H_h^1(\mathcal O))} +
    \tau \nm{g}_{L^2(\Omega;L^2(0,T;L^2(\mathcal O)))}
    \quad\text{(by \cref{eq:I-e^Deltah-h1,eq:ph-P-inf})} \\
    \lesssim{} &
    \tau \nm{g}_{L^2(\Omega;L^2(0,T;L^2(\mathcal O)))}
    \quad\text{(by \cref{eq:ph-regu}).}
  \end{align*}
  For $ \mathbb I_3 $ we have
  \begin{align*} 
    \nm{\mathbb I_3}_{L^2(\Omega;\dot H_h^{-1}\mathcal O))}
    & \leqslant
    \int_{t_j}^{t_{j+1}} \Nm{
      (e^{(t-t_j)\Delta_h} - I) Q_hg(t)
    }_{L^2(\Omega;\dot H_h^{-1}(\mathcal O))} \, \mathrm{d}t \\
    & \lesssim  \tau^{1/2} \int_{t_j}^{t_{j+1}}
    \nm{Q_hg(t)}_{L^2(\Omega;L^2(\mathcal O))} \, \mathrm{d}t
    \quad\text{(by \cref{eq:I-e^Deltah-l2})} \\
    & \lesssim \tau \nm{Q_hg}_{
      L^2(\Omega;L^2(t_j,t_{j+1};L^2(\mathcal O)))
    } \\
    & \leqslant \tau \nm{g}_{
      L^2(\Omega;L^2(t_j,t_{j+1};L^2(\mathcal O)))
    }.
  \end{align*}
  Combining \cref{eq:zh-Z-I1-I2-I3} and the above estimates of $ \mathbb I_1 $,
  $ \mathbb I_2 $ and $ \mathbb I_3 $ yields
  \begin{align*} 
    & \nm{z_h-Z}_{
      L^2(\Omega;L^2(t_j,t_{j+1};\dot H_h^{-1}(\mathcal O)))
    } \\
    ={} &
    \Nm{
      \int_{t_j}^{t_{j+1}} (z_h-Z)(t) \, \mathrm{d}W(t)
    }_{L^2(\Omega;\dot H_h^{-1}(\mathcal O))} \\
    \lesssim{} &
    \tau\nm{z_h}_{
      L^2(\Omega;L^2(t_j,t_{j+1};\dot H_h^1(\mathcal O)))
    } + \tau \nm{g}_{
      L^2(\Omega;L^2(0,T;L^2(\mathcal O)))
    },
  \end{align*}
  and hence
  \begin{align*} 
    & \nm{z_h-Z}_{
      L^2(\Omega;L^2(0,T;\dot H_h^{-1}(\mathcal O)))
    } \\
    \lesssim{} &
    \tau \nm{z_h}_{
      L^2(\Omega;L^2(0,T;\dot H_h^1(\mathcal O)))
    } + \tau^{1/2} \nm{g}_{
      L^2(\Omega;L^2(0,T;L^2(\mathcal O)))
    } \\
    \lesssim{} &
    \tau^{1/2} \nm{g}_{
      L^2(\Omega;L^2(0,T;L^2(\mathcal O)))
    } \quad\text{(by \cref{eq:ph-regu}).}
  \end{align*}
  This proves \cref{eq:zh-Z} and thus completes the proof.
\end{proof}
\begin{remark} 
  \label{rem:Z-stab}
  Under the condition of \cref{lem:ph-P}, a simple modification of the proof of
  \cref{eq:zh-Z} yields
  \[
    \nm{z_h-Z}_{
      L^2(\Omega;L^2(0,T;L^2(\mathcal O)))
    } \lesssim \nm{g}_{
      L^2(\Omega;L^2(0,T;L^2(\mathcal O)))
    },
  \]
  so that by \cref{eq:ph-regu} we obtain
  \[
    \nm{Z}_{
      L^2(\Omega;L^2(0,T;L^2(\mathcal O)))
    } \lesssim \nm{g}_{
      L^2(\Omega;L^2(0,T;L^2(\mathcal O)))
    }.
  \]
\end{remark}

Finally, from \cref{lem:p-ph,lem:ph-P} and estimate \cref{eq:H-Hh-equiv} we
conclude the following error estimate.
\begin{lemma} 
  \label{lem:first-p-P}
  Assume that $ g \in L_{\mathbb F}^2(\Omega;L^2(0,T;L^2(\mathcal O))) $. Let $
  (p,z) $ be the solution of \cref{eq:first_p} and let $ (P,Z) $ be the solution
  of \cref{eq:first-P}. Then
  \begin{equation}
    \label{eq:first-p-P}
    \begin{aligned}
      & \nm{p-P}_{L^2(\Omega;L^2(0,T;L^2(\mathcal O)))} +
      \nm{z-Z}_{L^2(\Omega;L^2(0,T;\dot H^{-1}(\mathcal O)))} \\
      \lesssim{} &
      (\tau^{1/2} + h^2) \nm{g}_{
        L^2(\Omega;L^2(0,T;L^2(\mathcal O)))
      }.
    \end{aligned}
  \end{equation}
\end{lemma}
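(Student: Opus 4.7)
The plan is to prove the estimate by the standard triangle inequality argument that splits the total error through the semidiscrete intermediate solution $(p_h, z_h)$ of \cref{eq:first_ph}. Writing
\[
  p - P = (p - p_h) + (p_h - P), \qquad z - Z = (z - z_h) + (z_h - Z),
\]
each of the four pieces has already been estimated in \cref{lem:p-ph,lem:ph-P}, so the only work is to pick the right parameters and to translate between the discrete norm $\dot H_h^{-1}(\mathcal O)$ and the continuous norm $\dot H^{-1}(\mathcal O)$.

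For the spatial part, I would apply \cref{lem:p-ph} with $\beta = -2$. This yields both
\[
  \nm{p - p_h}_{L^2(\Omega;L^2(0,T;L^2(\mathcal O)))} \lesssim h^2 \nm{g}_{L^2(\Omega;L^2(0,T;L^2(\mathcal O)))}
\]
(from the $\dot H^{\beta+2}$ term) and
\[
  \nm{z - z_h}_{L^2(\Omega;L^2(0,T;\dot H^{-1}(\mathcal O)))} \lesssim h^2 \nm{g}_{L^2(\Omega;L^2(0,T;L^2(\mathcal O)))}
\]
(from the $\dot H^{\beta+1}$ term). For the temporal part, \cref{eq:ph-P-l2} gives the bound $\tau^{1/2}\nm{g}$ on $\nm{p_h - P}_{L^2(\Omega;L^2(0,T;L^2(\mathcal O)))}$ directly, while \cref{eq:zh-Z} supplies $\nm{z_h - Z}_{L^2(\Omega;L^2(0,T;\dot H_h^{-1}(\mathcal O)))} \lesssim \tau^{1/2} \nm{g}$; the norm equivalence \cref{eq:H-Hh-equiv} then converts this into an estimate in $L^2(\Omega;L^2(0,T;\dot H^{-1}(\mathcal O)))$ since $z_h - Z$ takes values in $\mathcal V_h$.

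Adding the two contributions and using the triangle inequality yields \cref{eq:first-p-P}. There is no real obstacle here: all heavy lifting has been done in \cref{lem:p-ph,lem:ph-P}, and the only subtle point to flag is that the $\dot H_h^{-1}$-estimate of \cref{eq:zh-Z} must be transferred to $\dot H^{-1}$ via \cref{eq:H-Hh-equiv}, which is valid because the argument $z_h - Z$ is a $\mathcal V_h$-valued process.
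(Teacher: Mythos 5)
Your proof is correct and follows exactly the route the paper takes: the paper dispatches this lemma in one sentence by combining \cref{lem:p-ph} (with $\beta=-2$), \cref{lem:ph-P}, and the norm equivalence \cref{eq:H-Hh-equiv}, which is precisely your triangle-inequality decomposition through $(p_h,z_h)$. Your added remark that \cref{eq:H-Hh-equiv} applies because $z_h-Z$ is $\mathcal V_h$-valued is the right justification for the only nontrivial transfer step.
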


\subsection{Discretization of the adjoint equation}
\label{ssec:S1-S2}
For any $ g \in L_\mathbb F^2(\Omega;L^2(0,T;L^2(\mathcal O))) $, define
\[
  \mathcal S_1g \in \mathcal X_{h,\tau}
  \quad\text{and}\quad
  \mathcal S_2g \in L_\mathbb F^2(\Omega;L^2(0,T;\mathcal V_h))
\]
by
\begin{equation} 
  \label{eq:calS1-S2}
  \begin{aligned}
    & (\mathcal S_1g)_j - (\mathcal S_1g)_{j+1} \\
    ={} &
    \tau \Delta_h (\mathcal S_1g)_j +
    \int_{t_j}^{t_{j+1}} (\mathcal S_2g + Q_hg)(t) \, \mathrm{d}t -
    \int_{t_j}^{t_{j+1}} (\mathcal S_2g)(t) \, \mathrm{d}W(t)
  \end{aligned}
\end{equation}
for each $ 0 \leqslant j < J $, where $ (\mathcal S_1g)_J = 0 $.

We first establish the stability of $ \mathcal S_1 $.
\begin{lemma}
  \label{lem:S1-S2-stab}
  If $ g \in L_\mathbb F^2(\Omega;L^2(0,T;L^2(\mathcal O))) $, then
  \begin{equation} 
    \label{eq:S1-S2-stab}
    \begin{aligned}
      & \max_{0 \leqslant j \leqslant J}
      \nm{(\mathcal S_1g)_j}_{
        L^2(\Omega;L^2(\mathcal O))
      } + \nm{\mathcal S_1g}_{
        L^2(\Omega;L^2(0,T;\dot H^1(\mathcal O)))
      } \\
      \lesssim{} &
      \nm{g}_{
        L^2(\Omega;L^2(0,T;\dot H^{-1}(\mathcal O)))
      } + \tau \nm{g}_{
        L^2(\Omega;L^2(0,T;L^2(\mathcal O)))
      }.
    \end{aligned}
  \end{equation}
\end{lemma}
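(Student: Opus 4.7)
The plan is to adapt the approach of \cref{lem:S0-stab} to the backward scheme \cref{eq:calS1-S2}. Write $ P := \mathcal S_1 g $ and $ Z := \mathcal S_2 g $. Since $ P_j \in \mathcal X_{h,\tau} $ is $ \mathcal F_{t_j} $-measurable, testing \cref{eq:calS1-S2} against $ P_j $ in the $[\cdot,\cdot]$-pairing annihilates the Itô integral, $[\int_{t_j}^{t_{j+1}} Z\,\mathrm{d}W, P_j] = 0$. Combined with $[\Delta_h P_j, P_j] = -\nm{P_j}_{L^2(\Omega;\dot H^1(\mathcal O))}^2$, bounding $[P_{j+1}, P_j]$ by Cauchy--Schwarz as $\tfrac{1}{2}\mathbb E\nm{P_j}^2 + \tfrac{1}{2}\mathbb E\nm{P_{j+1}}^2$, and estimating $[\int_{t_j}^{t_{j+1}}(Z+Q_h g)\,\mathrm{d}t, P_j]$ via $\dot H^{-1}/\dot H^1$-duality together with Young's inequality yields the backward recursion
\[
  \mathbb E\nm{P_j}^2 + \tau\nm{P_j}_{L^2(\Omega;\dot H^1)}^2
  \lesssim \mathbb E\nm{P_{j+1}}^2 + \nm{Z}_{L^2(\Omega;L^2(t_j,t_{j+1};\dot H^{-1}))}^2 + \nm{g}_{L^2(\Omega;L^2(t_j,t_{j+1};\dot H^{-1}))}^2.
\]
Summing backward from $j=J-1$ using $P_J = 0$ then delivers the intermediate estimate
\[
  \max_j\mathbb E\nm{P_j}^2 + \nm{P}_{L^2(\Omega;L^2(0,T;\dot H^1))}^2 \lesssim \nm{Z}_{L^2(\Omega;L^2(0,T;\dot H^{-1}))}^2 + \nm{g}_{L^2(\Omega;L^2(0,T;\dot H^{-1}))}^2.
\]

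It then remains to control $\nm{Z}_{L^2(\Omega;L^2(0,T;\dot H^{-1}))}$. I would isolate the stochastic integral by subtracting the $\mathcal F_{t_j}$-conditional expectation of \cref{eq:calS1-S2}:
\[
  \int_{t_j}^{t_{j+1}} Z\,\mathrm{d}W
  = (P_{j+1} - \mathbb E_{t_j}P_{j+1})
  + \int_{t_j}^{t_{j+1}}\bigl[(Z+Q_h g)(t) - \mathbb E_{t_j}(Z+Q_h g)(t)\bigr]\,\mathrm{d}t.
\]
Taking $L^2(\Omega;\dot H^{-1})$-norms, applying Itô's isometry on the left, and using the conditional contraction $\nm{\mathbb E_{t_j} v}_{\dot H^{-1}} \leq \nm{v}_{\dot H^{-1}}$ on the right, I would absorb the resulting $\tau\nm{Z}_{\dot H^{-1}}^2$ term (valid for $\tau < 2/5$) to obtain
\[
  \nm{Z}_{L^2(\Omega;L^2(t_j,t_{j+1};\dot H^{-1}))}^2
  \lesssim \mathbb E\nm{P_{j+1}}_{\dot H^{-1}}^2
  + \tau\nm{g}_{L^2(\Omega;L^2(t_j,t_{j+1};\dot H^{-1}))}^2.
\]

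The principal obstacle is then to close the loop without losing a factor $J = T/\tau$ when summing $\sum_j \mathbb E\nm{P_{j+1}}_{\dot H^{-1}}^2$. The resolution is to exploit the Hilbert-space orthogonality $\mathbb E\nm{P_{j+1}}_{\dot H^{-1}}^2 = \mathbb E\nm{\mathbb E_{t_j}P_{j+1}}_{\dot H^{-1}}^2 + \mathbb E\nm{P_{j+1} - \mathbb E_{t_j}P_{j+1}}_{\dot H^{-1}}^2$ combined with the spectral identity $\nm{(I-\tau\Delta_h)v}_{\dot H^{-1}}^2 = \nm{v}_{\dot H^{-1}}^2 + 2\tau\nm{v}_{L^2}^2 + \tau^2\nm{v}_{\dot H^1}^2$, applied to the relation $\mathbb E_{t_j}P_{j+1} = (I-\tau\Delta_h)P_j - \mathbb E_{t_j}\int_{t_j}^{t_{j+1}}(Z+Q_h g)\,\mathrm{d}t$. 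This produces a telescoping $\dot H^{-1}$-identity whose residual terms are $\nm{P}_{L^2(L^2)}^2$ (already controlled by the energy bound) together with a deterministic backward-Euler discretization error that contributes the $\tau\nm{g}_{L^2(L^2)}$ correction in the manner of \cref{lem:lbj}. A final discrete Gronwall inequality then delivers \cref{eq:S1-S2-stab}.
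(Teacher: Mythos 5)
Your opening coincides with the paper's: you test \cref{eq:calS1-S2} against $P_j$, use the $\mathcal F_{t_j}$-measurability of $P_j$ to annihilate the stochastic integral, and treat $[P_{j+1},P_j]$ and the $g$-term by Cauchy--Schwarz and duality. The divergence --- and the gap --- lies in the drift contribution $\big[\int_{t_j}^{t_{j+1}}Z(t)\,\mathrm{d}t,\,P_j\big]$. Estimating it by $\dot H^{-1}/\dot H^{1}$ duality places $\nm{Z}_{L^2(\Omega;L^2(t_j,t_{j+1};\dot H^{-1}(\mathcal O)))}^2$ on the right of your recursion with an $O(1)$ constant, so after summation you must control $\nm{Z}_{L^2(\Omega;L^2(0,T;\dot H^{-1}(\mathcal O)))}^2$ by the very quantities you are bounding. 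Your isolation of $\int_{t_j}^{t_{j+1}}Z\,\mathrm{d}W$ (the paper's \cref{eq:zq}) together with It\^o's isometry correctly reduces this to $\sum_j\mathbb E\nm{(I-\mathbb E_{t_j})P_{j+1}}_{\dot H_h^{-1}(\mathcal O)}^2$, and your telescoping identity does produce that sum with a good sign; but its right-hand side is the cross term $\sum_j 2\big[(I-\tau\Delta_h)P_j,\ \mathbb E_{t_j}\int_{t_j}^{t_{j+1}}(Z+Q_hg)\,\mathrm{d}t\big]_{\dot H_h^{-1}}$, and after Young's inequality this reintroduces $\nm{Z+Q_hg}_{L^2(\Omega;L^2(0,T;\dot H^{-1}(\mathcal O)))}^2$ with a Young weight that cannot be taken small, because its conjugate weight on the $\nm{P_j}_{\dot H_h^{-1}}^2$ part must be small enough to be absorbed by the term $2\tau\nm{P_j}_{L^2}^2$ coming from your spectral identity. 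The loop therefore closes to an inequality of the form $\nm{Z}_{\dot H^{-1}}^2\lesssim\nm{Z}_{\dot H^{-1}}^2+\dots$ with a constant that is not less than one, and neither $\tau$ nor $T$ provides smallness to absorb it. The appeal to \cref{lem:lbj} for the $\tau\nm{g}_{L^2(0,T;L^2(\mathcal O))}$ term is also unsubstantiated: that lemma concerns the deterministic scheme and plays no role here.

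The paper never needs a global $\dot H^{-1}$ bound on $Z$. The device you are missing is the identity
\[
  \Big[\int_{t_j}^{t_{j+1}}Z(t)\,\mathrm{d}t,\ P_j\Big]
  =\Big[\int_{t_j}^{t_{j+1}}Z(t)\,\mathrm{d}W(t),\ P_j\,\delta W_j\Big]
  =\Big[P_{j+1}+\int_{t_j}^{t_{j+1}}(Z+g)(t)\,\mathrm{d}t,\ P_j\,\delta W_j\Big],
\]
the first equality by the It\^o isometry for the pairing (with the constant integrand $P_j$), the second by \cref{eq:zq}. Since $\nm{P_j\,\delta W_j}_{L^2(\Omega;L^2(\mathcal O))}=\sqrt\tau\,\nm{P_j}_{L^2(\Omega;L^2(\mathcal O))}$, every resulting term carries an extra factor of $\tau$; it then suffices to bound $Z$ \emph{locally} on $[t_j,t_{j+1}]$ in $L^2(\mathcal O)$ by $\nm{P_{j+1}}_{L^2(\Omega;L^2(\mathcal O))}+\sqrt\tau\,\nm{g}_{L^2(\Omega;L^2(t_j,t_{j+1};L^2(\mathcal O)))}$ (the paper's \cref{eq:2000}, again a direct consequence of \cref{eq:zq}). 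The resulting contributions $\tau\nm{P_{j+1}}_{L^2(\Omega;L^2(\mathcal O))}^2$ and $\tau^2\nm{g}_{L^2(\Omega;L^2(t_j,t_{j+1};L^2(\mathcal O)))}^2$ are exactly what the discrete Gronwall inequality and the final summation tolerate, and the latter is the source of the $\tau\nm{g}_{L^2(\Omega;L^2(0,T;L^2(\mathcal O)))}$ term in \cref{eq:S1-S2-stab}. You would need this (or an equivalent mechanism gaining a power of $\tau$ on the $Z$-drift term) for your argument to close.
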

\begin{proof} 
  Define $ P := \mathcal S_1g $ and $ Z := \mathcal S_2g $. We divide the proof
  into the following four steps.

  {\it Step 1.} Let us prove, for any $ 0 \leqslant j < J $,
  \begin{equation} 
    \label{eq:Pj-I1-I2-I3}
    \nm{P_j}_{L^2(\Omega;L^2(\mathcal O))}^2 +
    \tau \nm{P_j}_{L^2(\mathcal O;\dot H^1(\mathcal O))}^2 =
    \mathbb I_1 + \mathbb I_2 + \mathbb I_3,
  \end{equation}
  where
  \begin{align*}
    \mathbb I_1 &:= [P_{j+1}, P_{j}(1+\delta W_j)], \\
    \mathbb I_2 &:= \Big[
      \int_{t_j}^{t_{j+1}} g(t) \, \mathrm{d}t, \, P_j(1+\delta W_j)
    \Big], \\
    \mathbb I_3 &:= \Big[
      \int_{t_j}^{t_{j+1}} Z(t) \, \mathrm{d}t, \, P_j \delta W_j
    \Big].
  \end{align*}
  By \cref{eq:calS1-S2} we have
  \begin{small}
  \begin{align*} 
    & [P_j - P_{j+1}, P_j] \\
    ={} &
    \tau [\Delta_h P_j, P_j] +
    \Big[
      \int_{t_j}^{t_{j+1}} (Z + g)(t) \, \mathrm{d}t,
      P_j
    \Big] - \Big[
      \int_{t_j}^{t_{j+1}} Z(t) \, \mathrm{d}W(t), P_j
    \Big] \\
    ={} &
    \tau [\Delta_h P_j, P_j] +
    \Big[
      \int_{t_j}^{t_{j+1}} (Z + g)(t) \, \mathrm{d}t,
      P_j
    \Big] - \Big[
      \mathbb E_{t_j} \Big(
        \int_{t_j}^{t_{j+1}} Z(t) \, \mathrm{d}W(t)
      \Big), P_j
    \Big] \\
    ={} &
    \tau [\Delta_h P_j, P_j] +
    \Big[
      \int_{t_j}^{t_{j+1}} (Z + g)(t) \, \mathrm{d}t,
      P_j
    \Big] \\
    ={} &
    -\tau \nm{P_j}_{L^2(\Omega;\dot H^1(\mathcal O))}^2 +
    \Big[ \int_{t_j}^{t_{j+1}} (Z+g)(t) \, \mathrm{d}t, P_j \Big],
  \end{align*}
  \end{small}
  so that
  \begin{align} 
    & \nm{P_j}_{L^2(\Omega;L^2(\mathcal O))}^2 +
    \tau \nm{P_j}_{L^2(\Omega;\dot H^1(\mathcal O))}^2 \notag \\
    ={} &
    [P_j, P_{j+1}] + \Big[
      \int_{t_j}^{t_{j+1}} (Z+g)(t) \, \mathrm{d}t, P_j
    \Big] \notag \\
    ={} &
    [P_j, P_{j+1}] + \Big[ \int_{t_j}^{t_{j+1}} Z(t) \, \mathrm{d}t, P_j\Big] +
    \Big[ \int_{t_j}^{t_{j+1}} g(t) \, \mathrm{d}t, P_j\Big].
    \label{eq:731}
  \end{align}
  Since \cref{eq:calS1-S2} implies
  \begin{equation}
    \label{eq:zq}
    \int_{t_j}^{t_{j+1}} Z(t) \, \mathrm{d}W(t) =
    (I-\mathbb E_{t_j}) \Big(
      P_{j+1} + \int_{t_j}^{t_{j+1}} (Z+Q_hg)(t) \, \mathrm{d}t
    \Big),
  \end{equation}
  we obtain
  \begin{align*} 
    & \Big[
      \int_{t_j}^{t_{j+1}} Z(t) \, \mathrm{d}t, P_j
    \Big] =
    \int_{t_j}^{t_{j+1}} [Z(t), P_j] \, \mathrm{d}t \\
    ={} &
    \Big[
      \int_{t_j}^{t_{j+1}} Z(t) \, \mathrm{d}W(t), \,
      \int_{t_j}^{t_{j+1}} P_j \, \mathrm{d}W(t)
    \Big] \\
    ={} &
    \Big[
      \int_{t_j}^{t_{j+1}} Z(t) \, \mathrm{d}W(t),
      P_j \delta W_j
    \Big] \\
    ={} &
    \Big[
      P_{j+1} + \int_{t_j}^{t_{j+1}} (Z+g)(t) \, \mathrm{d}t, \,
      P_j \delta W_j
    \Big].
  \end{align*}
  Inserting the above equality into \cref{eq:731} yields \cref{eq:Pj-I1-I2-I3}.

  {\it Step 2.} Let us estimate $ \mathbb I_1 $, $ \mathbb I_2 $ and $ \mathbb
  I_3 $. For $ \mathbb I_1 $ and $ \mathbb I_2 $, we have the following two
  estimates:
  \begin{align*} 
    \mathbb I_1 & \leqslant
    \nm{P_{j+1}}_{L^2(\Omega;L^2(\mathcal O))}
    \nm{P_j(1+\delta W_j)}_{L^2(\Omega;L^2(\mathcal O))} \\
    &= \sqrt{1+\tau} \nm{P_{j+1}}_{L^2(\Omega;L^2(\mathcal O))}
    \nm{P_j}_{L^2(\Omega;L^2(\mathcal O))} \\
    & \leqslant
    \frac12 \nm{P_{j+1}}_{L^2(\Omega;L^2(\mathcal O))}^2 +
    \frac{1+\tau}2 \nm{P_j}_{L^2(\Omega;L^2(\mathcal O))}^2
  \end{align*}
  and
  \begin{align*}
    \mathbb I_2 & \leqslant
    \sqrt\tau \nm{g}_{
      L^2(\Omega;L^2(t_j,t_{j+1};\dot H^{-1}(\mathcal O)))
    } \nm{P_j(1+\delta W_j)}_{
      L^2(\Omega;\dot H^1(\mathcal O))
    } \\
    &= \sqrt{\tau(1+\tau)} \nm{g}_{
      L^2(\Omega;L^2(t_j,t_{j+1};\dot H^{-1}(\mathcal O)))
    } \nm{P_j}_{L^2(\Omega;\dot H^1(\mathcal O))} \\
    & \leqslant
    \frac{1+\tau}2 \nm{g}_{
      L^2(\Omega;L^2(t_j,t_{j+1};\dot H^{-1}(\mathcal O)))
    }^2 + \frac\tau2
    \nm{P_j}_{L^2(\Omega;\dot H^1(\mathcal O))}^2.
  \end{align*}
  By \cref{eq:zq} we have
  \begin{align*}
    & \nm{Z}_{L^2(\Omega;L^2(t_j,t_{j+1};L^2(\mathcal O)))} =
    \Nm{
      \int_{t_j}^{t_{j+1}} Z(t) \, \mathrm{d}W(t)
    }_{L^2(\Omega;L^2(\mathcal O))} \\
    \leqslant{} &
    \nm{P_{j+1}}_{L^2(\Omega;L^2(\mathcal O))} +
    \Nm{
      \int_{t_j}^{t_{j+1}} (Z+Q_hg)(t) \, \mathrm{d}t
    }_{L^2(\Omega;L^2(\mathcal O))} \\
    \leqslant{} &
    \nm{P_{j+1}}_{L^2(\Omega;L^2(\mathcal O))} +
    \sqrt\tau \nm{Z+Q_hg}_{L^2(\Omega;L^2(t_j,t_{j+1};L^2(\mathcal O)))} \\
    \leqslant{} &
    \nm{P_{j+1}}_{L^2(\Omega;L^2(\mathcal O))} +
    \sqrt\tau \nm{Z}_{L^2(\Omega;L^2(t_j,t_{j+1};L^2(\mathcal O)))} \\
    & \quad {} +
    \sqrt\tau \nm{g}_{L^2(\Omega;L^2(t_j,t_{j+1};L^2(\mathcal O)))},
  \end{align*}
  so that
  \begin{equation*} 
    \begin{aligned}
      & \nm{Z}_{L^2(\Omega;L^2(t_j,t_{j+1};L^2(\mathcal O)))} \\
      \leqslant{} &
      \frac1{1-\sqrt\tau} \big(
        \nm{P_{j+1}}_{L^2(\Omega;L^2(\mathcal O))} +
        \sqrt\tau\nm{g}_{L^2(\Omega;L^2(t_j,t_{j+1};L^2(\mathcal O)))}
      \big).
    \end{aligned}
  \end{equation*}
  It follows that
  \begin{equation} 
    \label{eq:2000}
    \begin{aligned}
      & \nm{Z}_{L^2(\Omega;L^2(t_j,t_{j+1};L^2(\mathcal O)))}^2 \\
      \leqslant{} &
      \frac2{(1-\sqrt\tau)^2} \big(
        \nm{P_{j+1}}_{L^2(\Omega;L^2(\mathcal O))}^2 +
        \tau\nm{g}_{L^2(\Omega;L^2(t_j,t_{j+1};L^2(\mathcal O)))}^2
      \big).
    \end{aligned}
  \end{equation}
  By the definition of $ \mathbb I_3 $, we obtain
  \begin{align}
    \mathbb I_3 & \leqslant
    \sqrt\tau \nm{Z}_{L^2(\Omega;L^2(t_j,t_{j+1};L^2(\mathcal O)))}
    \nm{P_j \delta W_j}_{L^2(\Omega;L^2(\mathcal O))} \notag \\
    &= \tau \nm{Z}_{L^2(\Omega;L^2(t_j,t_{j+1};L^2(\mathcal O)))}
    \nm{P_j}_{L^2(\Omega;L^2(\mathcal O))} \notag \\
    & \leqslant \frac\tau2 \nm{Z}_{L^2(\Omega;L^2(t_j,t_{j+1};L^2(\mathcal O)))}^2 +
    \frac\tau2 \nm{P_j}_{L^2(\Omega;L^2(\mathcal O))}^2.
    \label{eq:732}
  \end{align}
  Combining \cref{eq:2000,eq:732} yields
  \begin{align*}
    \mathbb I_3 & \leqslant
    \frac\tau{(1-\sqrt\tau)^2} \nm{P_{j+1}}_{L^2(\Omega;L^2(\mathcal O))}^2 +
    \frac{\tau^2}{(1-\sqrt\tau)^2}
    \nm{g}_{L^2(\Omega;L^2(t_j,t_{j+1}; L^2(\mathcal O)))}^2 \\
    & \qquad {} +
    \frac\tau2 \nm{P_j}_{L^2(\Omega;L^2(\mathcal O))}^2.
  \end{align*}

  {\it Step 3.} Combining \cref{eq:Pj-I1-I2-I3} and the estimates of $ \mathbb
  I_1 $, $ \mathbb I_2 $ and $ \mathbb I_3 $ in Step 2 yields
  \begin{align*} 
    & \nm{P_j}_{L^2(\Omega;L^2(\mathcal O))}^2 +
    \tau \nm{P_j}_{L^2(\Omega;\dot H^1(\mathcal O))}^2 \notag \\
    \leqslant{} & \Big(
      \frac12 + \frac\tau{(1-\sqrt\tau)^2}
    \Big) \nm{P_{j+1}}_{L^2(\Omega;L^2(\mathcal O))}^2 +
    \frac{1+2\tau}2 \nm{P_j}_{L^2(\Omega;L^2(\mathcal O))}^2 \notag \\
    & {} + \frac\tau2 \nm{P_j}_{L^2(\Omega;\dot H^1(\mathcal O))}^2 +
    \frac{1+\tau}2 \nm{g}_{
      L^2(\Omega;L^2(t_j,t_{j+1};\dot H^{-1}(\Omega)))
    }^2 \notag \\
    & {} + \frac{\tau^2}{(1-\sqrt\tau)^2}
    \nm{g}_{L^2(\Omega;L^2(t_j,t_{j+1};L^2(\mathcal O)))}^2,
  \end{align*}
  so that
  \begin{small}
  \begin{align} 
    & \nm{P_j}_{L^2(\Omega;L^2(\mathcal O))}^2 +
    \frac{\tau}{1-2\tau} \nm{P_j}_{L^2(\Omega;\dot H^1(\mathcal O))}^2 \notag \\
    \leqslant{} &
    \frac1{1-2\tau}\Big(
      1 + \frac{2\tau}{(1-\sqrt\tau)^2}
    \Big) \nm{P_{j+1}}_{L^2(\Omega;L^2(\mathcal O))}^2 +
    \frac{1+\tau}{1-2\tau} \nm{g}_{
      L^2(\Omega;L^2(t_j,t_{j+1};\dot H^{-1} (\mathcal O)))
    }^2 \notag \\
    & {} + \frac{2\tau^2}{(1-2\tau)(1-\sqrt\tau)^2}
    \nm{g}_{L^2(\Omega;L^2(t_j,t_{j+1};L^2(\mathcal O)))}^2.
    \label{eq:tmp}
  \end{align}
  \end{small}
  Hence, applying the discrete Gronwall's inequality yields, by the fact $ P_J =
  0 $, that
  \begin{small}
  \begin{align*} 
    & \max_{0 \leqslant j \leqslant J}
    \nm{P_j}_{L^2(\Omega;L^2(\mathcal O))}^2 \\
    \leqslant{} & \Big( \frac1{1-2\tau} \Big)^J \Big(
      1 + \frac{2\tau}{(1-\sqrt\tau)^2}
    \Big)^{J-1} \times {} \\
    & \Big(
      (1+\tau) \nm{g}_{
        L^2(\Omega;L^2(0,T;\dot H^{-1}(\mathcal O)))
      }^2 + \frac{2\tau^2}{(1-\sqrt\tau)^2}
      \nm{g}_{L^2(\Omega;L^2(0,T;L^2(\mathcal O)))}^2
    \Big) \\
    \lesssim{} &
    \nm{g}_{L^2(\Omega;L^2(0,T;\dot H^{-1}(\mathcal O)))}^2 +
    \tau^2 \nm{g}_{L^2(\Omega;L^2(0,T;L^2(\mathcal O)))}^2,
  \end{align*}
  \end{small}
  which implies
  \begin{equation}
    \label{eq:P-max}
    \max_{0 \leqslant j \leqslant J}
    \nm{P_j}_{L^2(\Omega;L^2(\mathcal O))}
    \lesssim \nm{g}_{L^2(\Omega;L^2(0,T;\dot H^{-1}(\mathcal O)))} +
    \tau \nm{g}_{L^2(\Omega;L^2(0,T;L^2(\mathcal O)))}.
  \end{equation}

  {\it Step 4.} Summing both sides of \cref{eq:tmp} over $ j $ from $ 0 $ to $
  J-1 $ yields
  \begin{small}
  \begin{align*} 
    & \sum_{j=0}^{J-1} \nm{P_j}_{L^2(\Omega;L^2(\mathcal O))}^2 +
    \frac1{1-2\tau}
    \nm{P}_{L^2(\Omega;L^2(0,T;\dot H^1(\mathcal O)))}^2 \\
    \leqslant{} &
    \frac1{1-2\tau}\Big(
      1+\frac{2\tau}{(1-\sqrt\tau)^2}
    \Big)  \sum_{j=0}^{J-1} \nm{P_{j+1}}_{L^2(\Omega;L^2(\mathcal O))}^2 +
    \frac{1+\tau}{1-2\tau} \nm{g}_{
      L^2(\Omega;L^2(0,T;\dot H^{-1}(\mathcal O)))
    }^2 \\
    & \qquad {} + \frac{2\tau^2}{(1-2\tau)(1-\sqrt\tau)^2}
    \nm{g}_{L^2(\Omega;L^2(0,T;L^2(\mathcal O)))}^2.
  \end{align*}
  \end{small}
  It follows, by the fact $ P_J = 0 $, that
  \begin{small}
  \begin{align*}
    & \frac1{1-2\tau} \nm{P}_{
      L^2(\Omega;L^2(0,T;\dot H^1(\mathcal O)))
    }^2 \\
    \leqslant{} &
    \Big(
      \frac1{1-2\tau}\Big(
        1 + \frac{2\tau}{(1-\sqrt\tau)^2}
      \Big) - 1
    \Big) \sum_{j=1}^{J-1} \nm{P_j}_{L^2(\Omega;L^2(\mathcal O))}^2 +
    \frac{1+\tau}{1-2\tau} \nm{g}_{
      L^2(\Omega;L^2(0,T;\dot H^{-1}(\mathcal O)))
    }^2 \\
    & {} + \frac{2\tau^2}{(1-2\tau)(1-\sqrt\tau)^2}
    \nm{g}_{L^2(\Omega;L^2(0,T;L^2(\mathcal O)))}^2 \\
    \lesssim{} &
    \max_{1 \leqslant j < J} \nm{P_j}_{L^2(\Omega;L^2(\mathcal O))}^2 +
    \nm{g}_{L^2(\Omega;L^2(0,T;\dot H^{-1}(\mathcal O)))}^2 +
    \tau^2 \nm{g}_{L^2(\Omega;L^2(0,T;L^2(\mathcal O)))}^2.
  \end{align*}
  \end{small}
  Hence, we conclude from \cref{eq:P-max} that
  \[ 
    \nm{P}_{L^2(\Omega;L^2(0,T;\dot H^1(\mathcal O)))}
    \lesssim \nm{g}_{L^2(\Omega;L^2(0,T;\dot H^{-1}(\mathcal O)))} +
    \tau \nm{g}_{L^2(\Omega;L^2(0,T;L^2(\mathcal O)))}.
  \]
  Finally, combining the above estimate and \cref{eq:P-max} proves
  \cref{eq:S1-S2-stab}.
\end{proof}
\begin{remark}
  Note that \cref{eq:2000} implies, for any $ 0 \leqslant j < J $.
  \[
    \nm{\mathcal S_2g}_{
      L^2(\Omega;L^2(t_j,t_{j+1};L^2(\mathcal O)))
    }^2 \lesssim \nm{(\mathcal S_1g)_{j+1}}_{
      L^2(\Omega;L^2(\mathcal O))
    }^2 + \tau \nm{g}_{
      L^2(\Omega;L^2(t_j,t_{j+1};L^2(\mathcal O)))
    }^2.
  \]
  It follows that
  \begin{small}
  \[
    \nm{\mathcal S_2g}_{
      L^2(\Omega;L^2(0,T;L^2(\mathcal O)))
    }^2 \lesssim \tau^{-1} \max_{0 \leqslant j \leqslant J}
    \nm{(\mathcal S_1g)_j}_{L^2(\Omega;L^2(\mathcal O))}^2 +
    \tau \nm{g}_{
      L^2(\Omega;L^2(0,T;L^2(\mathcal O)))
    }^2.
  \]
\end{small}
  Hence, by \cref{eq:S1-S2-stab} we obtain
  \begin{equation}
    \label{eq:calS2g-stab}
    \begin{aligned}
      & \nm{\mathcal S_2g}_{
        L^2(\Omega;L^2(0,T;L^2(\mathcal O)))
      } \\
      \lesssim{} &
      \tau^{-1/2} \nm{g}_{
        L^2(\Omega;L^2(0,T;\dot H^{-1}(\mathcal O)))
      } + \nm{g}_{
        L^2(\Omega;L^2(0,T;L^2(\mathcal O)))
      }.
    \end{aligned}
  \end{equation}
\end{remark}

Then we obtain the stability of $ \mathcal S_2 $.
\begin{lemma} 
  \label{lem:S2-stab}
  If $ g \in L_\mathbb F^2(\Omega;L^2(0,T;L^2(\mathcal O))) $, then
  \begin{equation} 
    \label{eq:S2-stab}
    \nm{\mathcal S_2g}_{L^2(\Omega;L^2(0,T;L^2(\mathcal O)))}
    \lesssim \nm{g}_{
      L^2(\Omega;L^2(0,T;L^2(\mathcal O)))
    }.
  \end{equation}
\end{lemma}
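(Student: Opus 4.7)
The plan is to derive a discrete energy identity by combining the It\^o isometry with the martingale representation \cref{eq:zq}, and then to absorb the resulting $Z$-norm on the right-hand side into the left. Write $P:=\mathcal S_1 g$ and $Z:=\mathcal S_2 g$, and set $X_j:=P_{j+1}+\int_{t_j}^{t_{j+1}}(Z+Q_h g)(t)\,\mathrm{d}t$. On the one hand, \cref{eq:zq} reads $\int_{t_j}^{t_{j+1}}Z\,\mathrm{d}W=(I-\mathbb E_{t_j})X_j$. On the other hand, taking $\mathbb E_{t_j}$ of \cref{eq:calS1-S2} and using that $P_j$ and $\Delta_h P_j$ are $\mathcal F_{t_j}$-measurable gives the complementary formula $\mathbb E_{t_j}X_j=P_j-\tau\Delta_h P_j$.

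Combining the It\^o isometry $\mathbb E\Nm{\int_{t_j}^{t_{j+1}}Z\,\mathrm{d}W}_{L^2(\mathcal O)}^2=\mathbb E\int_{t_j}^{t_{j+1}}\nm{Z}_{L^2(\mathcal O)}^2\,\mathrm{d}t$ with the orthogonal decomposition $\mathbb E\nm{X_j-\mathbb E_{t_j}X_j}^2=\mathbb E\nm{X_j}^2-\mathbb E\nm{\mathbb E_{t_j}X_j}^2$, expanding the two squared norms (using $-(P_j,\Delta_h P_j)_{L^2}=\nm{\nabla P_j}_{L^2}^2$), and summing over $j\in\{0,\dots,J-1\}$ while invoking $P_J=0$, I obtain the discrete energy identity
\begin{align*}
  \mathbb E\int_0^T \nm{Z}^2\,\mathrm{d}t & + \mathbb E\nm{P_0}^2 + \sum_{j=0}^{J-1}\bigl(2\tau\mathbb E\nm{\nabla P_j}^2 + \tau^2\mathbb E\nm{\Delta_h P_j}^2\bigr) \\
  & = 2\sum_{j=0}^{J-1} \mathbb E\Bigl(P_{j+1},\,\int_{t_j}^{t_{j+1}}(Z+Q_h g)\,\mathrm{d}t\Bigr) + \sum_{j=0}^{J-1} \mathbb E\Nm{\int_{t_j}^{t_{j+1}}(Z+Q_h g)\,\mathrm{d}t}^2.
\end{align*}

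Each term on the left is non-negative. The second sum on the right is at most $\tau\mathbb E\int_0^T\nm{Z+Q_h g}^2\,\mathrm{d}t\le 2\tau\bigl(\nm{Z}_{L^2(\Omega;L^2(0,T;L^2(\mathcal O)))}^2+\nm{g}_{L^2(\Omega;L^2(0,T;L^2(\mathcal O)))}^2\bigr)$. For the first sum, the crucial observation is that $P_J=0$ implies $\sum_j\tau\mathbb E\nm{P_{j+1}}^2\le \nm{P}_{L^2(\Omega;L^2(0,T;L^2(\mathcal O)))}^2$, so Cauchy--Schwarz and Young's inequality bound it by $\varepsilon\nm{Z}_{L^2(L^2)}^2 + C_\varepsilon\bigl(\nm{P}_{L^2(L^2)}^2+\nm{g}_{L^2(L^2)}^2\bigr)$. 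Choosing $\varepsilon$ so small that $\varepsilon+2\tau<1$, which is possible since $\tau<2/5$, I absorb the $Z$-term into the left and obtain $\nm{Z}_{L^2(L^2)}^2\lesssim \nm{P}_{L^2(L^2)}^2+\nm{g}_{L^2(L^2)}^2$. A final application of \cref{lem:S1-S2-stab}, together with the Poincar\'e inequality $\nm{v}_{L^2}\lesssim\nm{v}_{\dot H^1}$ and the embedding $\nm{g}_{L^2(\dot H^{-1})}\lesssim\nm{g}_{L^2(L^2)}$, yields $\nm{P}_{L^2(\Omega;L^2(0,T;L^2))}\lesssim\nm{g}_{L^2(\Omega;L^2(0,T;L^2))}$ and closes the estimate.

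I expect the main obstacle to be setting up the identity in exactly the form above. A na\"ive estimate that simply uses $\nm{(I-\mathbb E_{t_j})X_j}\le\nm{X_j}$, without the subtraction of $\mathbb E\nm{\mathbb E_{t_j}X_j}^2$, would produce a term $2\sum_j\mathbb E\nm{P_{j+1}}^2$ on the right, which is of magnitude $\tau^{-1}\nm{P}_{L^2(L^2)}^2$ and gives precisely the $\tau^{-1/2}$ loss already visible in \cref{eq:calS2g-stab}. Subtracting the conditional mean is exactly what converts this square into the harmless inner product $2\mathbb E(P_{j+1},\int(Z+Q_h g)\,\mathrm{d}t)$ while simultaneously generating the compensating non-negative quantities $\mathbb E\nm{P_0}^2$, $2\tau\mathbb E\nm{\nabla P_j}^2$, and $\tau^2\mathbb E\nm{\Delta_h P_j}^2$ on the left, thereby recovering the $\tau$-independent stability.
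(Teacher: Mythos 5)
Your proof is correct, but it takes a genuinely different route from the paper's. The paper proves \cref{eq:S2-stab} by introducing the spatially semidiscrete pair $(p_h,z_h)$ solving $\mathrm{d}p_h=-(\Delta_h p_h+Q_hg+z_h)\,\mathrm{d}t+z_h\,\mathrm{d}W$, then the auxiliary scheme \cref{eq:first-P} with data $g+z_h$, and finally writing $\mathcal S_2g-Z=\mathcal S_2(Z-z_h)$; the crude bound \cref{eq:calS2g-stab}, which loses a factor $\tau^{-1/2}$ through the $\dot H^{-1}$ norm, is applied to the difference $Z-z_h$, and the loss is exactly cancelled by the $\tau^{1/2}$ gain in \cref{eq:zh-Z}. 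Your argument bypasses all of that auxiliary machinery: the identity $\mathbb E\nm{(I-\mathbb E_{t_j})X_j}^2=\mathbb E\nm{X_j}^2-\mathbb E\nm{\mathbb E_{t_j}X_j}^2$ combined with the It\^o isometry and $\mathbb E_{t_j}X_j=P_j-\tau\Delta_h P_j$ yields a telescoping discrete energy identity in which the dangerous $\sum_j\mathbb E\nm{P_{j+1}}^2$ term is replaced by the inner product $2\sum_j\mathbb E\bigl(P_{j+1},\int_{t_j}^{t_{j+1}}(Z+Q_hg)\,\mathrm{d}t\bigr)$, which after Cauchy--Schwarz in $j$ is controlled by $\nm{P}_{L^2(\Omega;L^2(0,T;L^2(\mathcal O)))}$ rather than $\tau^{-1/2}$ times it; the absorption of the $Z$-terms works because $\tau<2/5$, and the conclusion follows from \cref{lem:S1-S2-stab}. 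I checked the identity and the estimates: the signs of $2\tau\mathbb E\nm{\nabla P_j}^2$ and $\tau^2\mathbb E\nm{\Delta_h P_j}^2$ are indeed favorable, the telescoping uses $P_J=0$ correctly, and all quantities are finite a priori since $\mathcal S_2g\in L_\mathbb F^2(\Omega;L^2(0,T;\mathcal V_h))$ by definition, so the absorption is legitimate. Your approach is more elementary and self-contained (it needs neither \cref{lem:ph-P} nor \cref{rem:Z-stab} nor the semidiscrete equation), whereas the paper's proof buys economy by reusing the error-estimate machinery it has already built for the convergence analysis in \cref{ssec:sbde}; your energy identity also delivers the bonus bounds on $\mathbb E\nm{P_0}^2$ and $\sum_j\tau\mathbb E\nm{\nabla P_j}^2$ for free.
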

\begin{proof} 
  Let $ (p_h, z_h) $ be the solution of the equation
  \begin{equation} 
    \begin{cases}
      \mathrm{d}p_h(t) =
      -(\Delta_h p_h + Q_hg + z_h)(t) \, \mathrm{d}t
      + z_h(t) \, \mathrm{d}W(t), \,\, 0 \leqslant t \leqslant T, \\
      p_h(T) = 0.
    \end{cases}
  \end{equation}
  Similarly to \cref{eq:S1-S2-regu}, we have
  \begin{equation} 
    \label{eq:zh-regu}
    \nm{z_h}_{L^2(\Omega;L^2(0,T;\dot H_h^1(\mathcal O)))}
    \lesssim \nm{g}_{
      L^2(\Omega;L^2(0,T;L^2(\mathcal O)))
    }.
  \end{equation}
  Define $ P \in \mathcal X_{h,\tau} $ and $ Z \in L_\mathbb F^2(\Omega;L^2(0,T;
  \mathcal V_h)) $ by
  \begin{small}
  \begin{equation} 
    \label{eq:S2-stab-1}
    \begin{cases}
      P_j \!-\! P_{j+1} =
      \tau\Delta_h P_j \!+\!
      Q_h \int_{t_j}^{t_{j+1}} (g \!+\! z_h)(t) \, \mathrm{d}t -
      \int_{t_j}^{t_{j+1}} Z(t) \, \mathrm{d}W(t),
      \,\, 0 \leqslant j < J, \\
      P_J = 0.
    \end{cases}
  \end{equation}
  \end{small}
  By \cref{eq:zh-Z} we obtain
  \begin{equation}
    \label{eq:zh-wtZ}
    \nm{z_h-Z}_{
      L^2(\Omega;L^2(0,T;\dot H_h^{-1}(\mathcal O)))
    }  \lesssim \tau^{1/2} \nm{g+z_h}_{
      L^2(\Omega;L^2(0,T;L^2(\mathcal O)))
    },
  \end{equation}
  and by \cref{rem:Z-stab} we have
  \begin{equation}
    \label{eq:97}
    \nm{Z}_{
      L^2(\Omega;L^2(0,T;L^2(\mathcal O)))
    } \lesssim \nm{g+z_h}_{
      L^2(\Omega;L^2(0,T;L^2(\mathcal O)))
    }.
  \end{equation}
  Let
  \[ 
    E^P := \mathcal S_1g - P \quad \text{and} \quad
    E^Z := \mathcal S_2g - Z.
  \]
  By \cref{eq:calS1-S2,eq:S2-stab-1} we have, for any $ 0 \leqslant j < J $,
  \begin{align*} 
    & E_j^P - E_{j+1}^P \\
    ={} &
    \tau \Delta_h E_j^P  +
    \int_{t_j}^{t_{j+1}} (\mathcal S_2g-z_h)(t) \, \mathrm{d}t -
    \int_{t_j}^{t_{j+1}} E^Z(t) \, \mathrm{d}W(t) \\
    ={} &
    \tau \Delta_h E_j^P  +
    \int_{t_j}^{t_{j+1}} (E^Z + Z - z_h)(t) \, \mathrm{d}t -
    \int_{t_j}^{t_{j+1}} E^Z(t) \, \mathrm{d}W(t),
  \end{align*}
  and so from the fact $ E^P_J = 0 $ we conclude that
  \begin{equation}
    \label{eq:E^P-S1}
    E^P = \mathcal S_1(Z - z_h)
    \quad\text{and}\quad
    E^Z = \mathcal S_2(Z - z_h).
  \end{equation}
  It follows that
  \begin{align*} 
    & \nm{E^Z}_{
      L^2(\Omega;L^2(0,T;L^2(\mathcal O)))
    } \\
    \lesssim{} &
    \tau^{-1/2} \nm{z_h-Z}_{
      L^2(\Omega;L^2(0,T;\dot H^{-1}(\mathcal O)))
    } + \nm{z_h-Z}_{
      L^2(\Omega;L^2(0,T;L^2(\mathcal O)))
    } \quad\text{(by \cref{eq:calS2g-stab})} \\ \label{eq:1001}
    \lesssim{} &
    \tau^{-1/2} \nm{z_h-Z}_{
      L^2(\Omega;L^2(0,T;\dot H_h^{-1}(\mathcal O)))
    } + \nm{z_h-Z}_{
      L^2(\Omega;L^2(0,T;L^2(\mathcal O)))
    } \quad\text{(by \cref{eq:H-Hh-equiv}).}
  \end{align*}
  Therefore,
  \begin{align*}
    & \nm{\mathcal S_2g}_{
      L^2(\Omega;L^2(0,T;L^2(\mathcal O)))
    } \\
    \leqslant{} &
    \nm{E^Z}_{L^2(\Omega;L^2(0,T;L^2(\mathcal O)))} +
    \nm{Z}_{L^2(\Omega;L^2(0,T;L^2(\mathcal O)))} \\
    \lesssim{} &
    \tau^{-1/2} \nm{z_h-Z}_{
      L^2(\Omega;L^2(0,T;\dot H_h^{-1}(\mathcal O)))
    } +
    \nm{z_h-Z}_{L^2(\Omega;L^2(0,T;L^2(\mathcal O)))} \\
    & \qquad {} + \nm{Z}_{L^2(\Omega;L^2(0,T;L^2(\mathcal O)))} \\
    \lesssim{} &
    \nm{g}_{L^2(\Omega;L^2(0,T;L^2(\mathcal O)))}
    \quad\text{(by \cref{eq:zh-regu,eq:zh-wtZ,eq:97}).}
  \end{align*}
  This proves \cref{eq:S2-stab} and thus concludes the proof.
\end{proof}

Finally, by the stability estimate \cref{eq:S1-S2-stab} and the theoretical
results in \cref{ssec:sbde}, we derive the convergence of $ \mathcal S_1 $.
\begin{lemma}
  \label{lem:S1-conv}
  If $ g \in L_{\mathbb F}^2( \Omega;L^2(0,T;L^2(\mathcal O))) $, then
  \begin{equation} 
    \label{eq:S1-conv}
    \nm{S_1 g - \mathcal S_1g}_{
      L^2(\Omega;L^2(0,T;L^2(\mathcal O)))
    } \lesssim (\tau^{1/2} + h^2) \nm{g}_{
      L^2(\Omega;L^2(0,T;L^2(\mathcal O)))
    }.
  \end{equation}
\end{lemma}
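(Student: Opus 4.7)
The plan is to split the error $S_1 g - \mathcal S_1 g$ by inserting an intermediate discretization that already fits the framework of \cref{lem:first-p-P}. Write $(p,z) := (S_1 g, S_2 g)$. By uniqueness of the backward SPDE \cref{eq:first_p}, the pair $(p,z)$ also solves the simpler equation $\mathrm d p = -(\Delta p + (g+z))\,\mathrm d t + z\,\mathrm d W$ with $p(T)=0$ when $g+z$ is viewed as the forcing (the martingale coefficient is still recovered uniquely as $z$). Accordingly, I would introduce $(\widetilde P,\widetilde Z)\in\mathcal X_{h,\tau}\times L_\mathbb F^2(\Omega;L^2(0,T;\mathcal V_h))$ as the solution of \cref{eq:first-P} driven by $g+z$ in place of $g$, and invoke \cref{lem:first-p-P} together with \cref{eq:S1-S2-regu} (which bounds $\nm{g+z}$ by $\nm{g}$) to obtain
\[
  \nm{p-\widetilde P}_{L^2(\Omega;L^2(0,T;L^2(\mathcal O)))} + \nm{z-\widetilde Z}_{L^2(\Omega;L^2(0,T;\dot H^{-1}(\mathcal O)))} \lesssim (\tau^{1/2}+h^2)\,\nm{g}_{L^2(\Omega;L^2(0,T;L^2(\mathcal O)))}.
\]

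Next I would treat the remaining piece $\widetilde P - \mathcal S_1 g$ purely at the discrete level. Subtracting \cref{eq:calS1-S2} from the recursion defining $\widetilde P$, and using $Q_h\mathcal S_2 g = \mathcal S_2 g$ and $Q_h\widetilde Z = \widetilde Z$ to rewrite
\[
  Q_h(z-\mathcal S_2 g) = Q_h(z-\widetilde Z) + (\widetilde Z - \mathcal S_2 g),
\]
I expect the error pair $(\widetilde P - \mathcal S_1 g,\,\widetilde Z - \mathcal S_2 g)$ to satisfy precisely the scheme \cref{eq:calS1-S2} with forcing $z-\widetilde Z$, so by uniqueness
\[
  \widetilde P - \mathcal S_1 g = \mathcal S_1(z-\widetilde Z).
\]
Applying the stability estimate \cref{eq:S1-S2-stab} to $f = z-\widetilde Z$ then gives
\[
  \nm{\widetilde P - \mathcal S_1 g}_{L^2(\Omega;L^2(0,T;L^2(\mathcal O)))} \lesssim \nm{z-\widetilde Z}_{L^2(\Omega;L^2(0,T;\dot H^{-1}(\mathcal O)))} + \tau\nm{z-\widetilde Z}_{L^2(\Omega;L^2(0,T;L^2(\mathcal O)))}.
\]
The first term on the right is already bounded by $(\tau^{1/2}+h^2)\nm{g}$ from the previous step. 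For the second, I would use $\nm{z}\lesssim\nm{g}$ from \cref{eq:S1-S2-regu} together with $\nm{\widetilde Z}_{L^2}\lesssim\nm{g+z}_{L^2}\lesssim\nm{g}_{L^2}$, which follows from \cref{rem:Z-stab} applied with forcing $g+z$, to conclude $\tau\nm{z-\widetilde Z}_{L^2}\lesssim\tau\nm{g}\lesssim\tau^{1/2}\nm{g}$. A final triangle inequality then yields \cref{eq:S1-conv}.

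The main obstacle I anticipate is the algebraic identification $\widetilde P - \mathcal S_1 g = \mathcal S_1(z-\widetilde Z)$: the $Q_h$ projections and the martingale part have to be reshuffled consistently so that the difference scheme agrees exactly with \cref{eq:calS1-S2}, thereby allowing the already-established stability \cref{eq:S1-S2-stab} to absorb the residual in the weaker $\dot H^{-1}$-norm. This norm choice is essential, because only in $\dot H^{-1}$ does \cref{lem:first-p-P} supply the full $(\tau^{1/2}+h^2)$ rate for $z-\widetilde Z$, while the $L^2$-component is merely stable and must be compensated by the $\tau$ prefactor arising from \cref{eq:S1-S2-stab}.
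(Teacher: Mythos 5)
Your proposal is correct and follows essentially the same route as the paper: the paper likewise introduces the auxiliary pair $(P,Z)$ solving \cref{eq:first-P} with forcing $g+S_2g$, invokes \cref{lem:first-p-P} and \cref{rem:Z-stab}, identifies $\mathcal S_1 g - P = \mathcal S_1(Z-S_2g)$ (the step you flag as the main obstacle is already carried out in the proof of \cref{lem:S2-stab}, cf.\ \cref{eq:E^P-S1}), and closes with the stability estimate \cref{eq:S1-S2-stab} exactly as you describe.
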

\begin{proof} 
  Define $ P \in \mathcal X_{h,\tau} $ and $ Z \in L_\mathbb
  F^2(\Omega;L^2(0,T;\mathcal V_h)) $ by
  \begin{small}
  \begin{equation} 
    \label{eq:Conv-S1-1}
    \begin{cases}
      P_j - P_{j+1} =
      \tau\Delta_h P_j +
      Q_h \! \int_{t_j}^{t_{j+1}} \!
      (g \!+\! S_2g)(t) \, \mathrm{d}t \!-\!
      \int_{t_j}^{t_{j+1}} \! Z(t)
      \, \mathrm{d}W\!(t),
      \, 0 \leqslant j < J, \\
      P_J = 0.
    \end{cases}
  \end{equation}
  \end{small}
  By \cref{eq:first-p-P} we obtain
  \begin{align}
    & \nm{S_1g - P}_{
      L^2(\Omega;L^2(0,T;L^2(\mathcal O)))
    } + \nm{S_2g - Z}_{
      L^2(\Omega;L^2(0,T;\dot H^{-1}(\mathcal O)))
    } \notag \\
    \lesssim{} &
    (\tau^{1/2} + h^2) \nm{g+S_2g}_{
      L^2(\Omega;L^2(0,T;L^2(\mathcal O)))
    }, \label{eq:S2g-wtZ}
  \end{align}
  and by \cref{rem:Z-stab} we have
  \begin{align}
    \nm{Z}_{
      L^2(\Omega;L^2(0,T;L^2(\mathcal O)))
    } & \lesssim \nm{g+S_2g}_{
      L^2(\Omega;L^2(0,T;L^2(\mathcal O)))
    }. \label{eq:wtZ-stab}
  \end{align}
  Similar to \cref{eq:E^P-S1}, we have
  \[
    \mathcal S_1g - P = \mathcal S_1(Z - S_2g),
  \]
  and so \cref{eq:S1-S2-stab} implies
  \begin{align*} 
    & \max_{0 \leqslant j \leqslant J}
    \nm{E_j^P}_{L^2(\Omega;L^2(\mathcal O))} \\
    \lesssim{} &
    \nm{S_2g-Z}_{
      L^2(\Omega;L^2(0,T;\dot H^{-1}(\mathcal O)))
    } + \tau \nm{S_2g-Z}_{
      L^2(\Omega;L^2(0,T;L^2(\mathcal O)))
    }.
  \end{align*}
  It follows that
  \begin{align*} 
    & \nm{E^P}_{L^2(\Omega;L^2(0,T;L^2(\mathcal O)))} \\
    \lesssim{} &
    \nm{S_2g-Z}_{
      L^2(\Omega;L^2(0,T;\dot H^{-1}(\mathcal O)))
    } + \tau \nm{S_2g-Z}_{
      L^2(\Omega;L^2(0,T;L^2(\mathcal O)))
    },
  \end{align*}
  and hence
  \begin{align*} 
    & \nm{S_1g-\mathcal S_1g}_{
      L^2(\Omega;L^2(0,T;L^2(\mathcal O)))
    } \\
    \leqslant{} &
    \nm{S_1g - P}_{
      L^2(\Omega;L^2(0,T;L^2(\mathcal O)))
    } + \nm{E^P}_{
      L^2(\Omega;L^2(0,T;L^2(\mathcal O)))
    } \\
    \lesssim{} &
    \nm{S_1g-P}_{
      L^2(\Omega;L^2(0,T;L^2(\mathcal O)))
    } + \nm{S_2g-Z}_{
      L^2(\Omega;L^2(0,T;\dot H^{-1}(\mathcal O)))
    } \\
    & \quad {} + \tau \nm{S_2g-Z}_{
      L^2(\Omega;L^2(0,T;L^2(\mathcal O)))
    }.
  \end{align*}
  Therefore, \cref{eq:S1-conv} follows from \cref{eq:S2g-wtZ},
  \cref{eq:wtZ-stab} and the fact that
  \[
    \nm{S_2g}_{L^2(\Omega;L^2(0,T;\dot H^1(\mathcal O)))}
    \lesssim \nm{g}_{L^2(\Omega;L^2(0,T;L^2(\mathcal O)))}
    \quad\text{(by \cref{eq:S1-S2-regu}).}
  \]
  This completes the proof.
\end{proof}

\begin{remark}
  Assume that $ g \in L_{\mathbb F}^2(\Omega;L^2(0,T;L^2(\mathcal O))) $. By
  \cref{lem:dg-stab} we obtain
  \begin{align*} 
    & \tau^{-1} \sum_{j=0}^{J-1}
    \nm{
      (\mathcal S_1g)_j - (\mathcal S_1g)_{j+1}
    }_{L^2(\Omega;L^2(\mathcal O))}^2 \\
    \lesssim{} &
    \nm{g+\mathcal S_2g}_{L^2(\Omega;L^2(0,T;L^2(\mathcal O)))}^2 +
    \sum_{j=0}^{J-1} \tau^{-1}
    \Nm{
      \int_{t_j}^{t_{j+1}} (\mathcal S_2g)(t) \, \mathrm{d}W(t)
    }_{L^2(\Omega;L^2(\mathcal O))}^2 \\
    ={} &
    \nm{g+\mathcal S_2g}_{L^2(\Omega;L^2(0,T;L^2(\mathcal O)))}^2 +
    \sum_{j=0}^{J-1} \tau^{-1} \nm{\mathcal S_2g}_{
      L^2(\Omega;L^2(t_j,t_{j+1};L^2(\mathcal O)))
    }^2 \\
    ={} &
    \nm{g+\mathcal S_2g}_{
      L^2(\Omega;L^2(0,T;L^2(\mathcal O)))
    }^2 + \tau^{-1} \nm{\mathcal S_2g}_{
      L^2(\Omega;L^2(0,T;L^2(\mathcal O)))
    }^2,
  \end{align*}
  so that
  \begin{align*} 
    & \sum_{j=0}^{J-1} \nm{
      (\mathcal S_1g)_j - (\mathcal S_1g)_{j+1}
    }_{L^2(\Omega;L^2(\mathcal O))}^2 \\
    \lesssim{} &
    \tau\nm{g}_{L^2(\Omega;L^2(0,T;L^2(\mathcal O)))}^2 +
    \nm{\mathcal S_2g}_{L^2(\Omega;L^2(0,T;L^2(\mathcal O)))}^2 \\
    \lesssim{} &
    \nm{g}_{L^2(\Omega;L^2(0,T;L^2(\mathcal O)))}^2
    \quad\text{(by \cref{lem:S2-stab}).}
  \end{align*}
  By this estimate and \cref{eq:S1-conv} we then obtain
  \begin{align*} 
    & \sum_{j=0}^{J-1} \nm{S_1g-(\mathcal S_1g)_{j+1}}_{
      L^2(\Omega;L^2(t_j,t_{j+1};L^2(\mathcal O)))
    }^2 \\
    \lesssim{} &
    \sum_{j=0}^{J-1} \big(
      \nm{S_1g-(\mathcal S_1g)_{j}}_{
        L^2(\Omega;L^2(t_j,t_{j+1};L^2(\mathcal O)))
      }^2 + \tau \nm{
        (\mathcal S_1g)_j - (\mathcal S_1g)_{j+1}
      }_{L^2(\Omega;L^2(\mathcal O))}^2
    \big) \\
    \lesssim{} &
    (\tau^{1/2} + h^2)^2 \nm{g}_{
      L^2(\Omega;L^2(0,T;L^2(\mathcal O)))
    }^2,
  \end{align*}
  which implies
  \begin{small}
  \begin{equation} 
    \label{eq:S1-conv-new}
    \Big(
      \sum_{j=0}^{J-1} \nm{S_1g-(\mathcal S_1g)_{j+1}}_{
        L^2(\Omega;L^2(t_j,t_{j+1};L^2(\mathcal O)))
      }
    \Big)^{1/2} \lesssim
    (\tau^{1/2} + h^2) \nm{g}_{
      L^2(\Omega;L^2(0,T;L^2(\mathcal O)))
    }.
  \end{equation}
\end{small}
\end{remark}

\subsection{Proof of \texorpdfstring{\cref{thm:conv}}{}}
\label{ssec:proof-conv}
In this subsection, we will use the theoretical results in
\cref{ssec:S0,ssec:S1-S2} to prove \cref{thm:conv}. The basic idea is to use the
first order optimality conditions of problem \cref{eq:model} and
\cref{eq:discretization} to derive the error estimate; this is standard in the
numerical analysis of the optimal control problems with PDE constraints (see,
e.g., \cite{Hinze2005}).

We first present the following four auxiliary lemmas.
\begin{lemma}
  \label{lem:S0f-g}
  For any $ f, g \in L_{\mathbb F}^2(\Omega;L^2(0,T;L^2(\mathcal O))) $, we have
  \begin{equation} 
    \label{eq:S1f-g}
    \begin{aligned}
      \int_0^T [\mathcal S_0 f(t), g(t)] \, \mathrm{d}t
      & =
      \sum_{j=0}^{J-1} \bigg( \Big[
        \int_{t_j}^{t_{j+1}} f(t) \, \mathrm{d}t, \,
        (\mathcal S_1 g)_{j+1}
      \Big] - {} \\
      & \qquad\qquad\qquad \Big[
        (\mathcal S_0 f)_j \delta W_j, \,
        \int_{t_j}^{t_{j+1}} (\mathcal S_2g + g)(t) \, \mathrm{d}t
      \Big]
    \bigg).
    \end{aligned}
  \end{equation}
\end{lemma}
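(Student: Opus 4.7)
Write $Y := \mathcal{S}_0 f$, $P := \mathcal{S}_1 g$, $Z := \mathcal{S}_2 g$. Since $Y$ equals $Y_j$ on $[t_j, t_{j+1})$, while $Y_0 = 0$ and $P_J = 0$, the telescoping identity
\[
0 = \sum_{j=0}^{J-1}\big([Y_{j+1}, P_{j+1}] - [Y_j, P_j]\big)
\]
holds. I plan to expand this zero two ways via the discrete product rule
\[
[Y_{j+1}, P_{j+1}] - [Y_j, P_j] = [Y_{j+1} - Y_j,\, P_{j+1}] + [Y_j,\, P_{j+1} - P_j],
\]
substituting \cref{eq:calS0} in the first summand and \cref{eq:calS1-S2} in the second. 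Because $Y_j$, $\Delta_h Y_{j+1}$ and $\Delta_h P_{j+1}$ all lie in $\mathcal{V}_h$, every $Q_h$ inside a pairing can be dropped. Self-adjointness of $\Delta_h$ on $\mathcal{V}_h$ identifies the two Laplacian contributions $\tau\sum_j [\Delta_h Y_{j+1}, P_{j+1}]$ and $\tau\sum_j [Y_j, \Delta_h P_j]$ after a single-step index shift (using $Y_0 = 0$, $P_J = 0$); the term $[Y_j,\int_{t_j}^{t_{j+1}} Z\,\mathrm{d}W]$ vanishes because $Y_j$ is $\mathcal{F}_{t_j}$-measurable while the Itô integral has zero $\mathcal{F}_{t_j}$-conditional expectation. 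Rearranging yields
\[
\int_0^T [Y, g]\,\mathrm{d}t = \sum_{j=0}^{J-1}\Big[\int_{t_j}^{t_{j+1}} f\,\mathrm{d}t,\, P_{j+1}\Big] + \sum_{j=0}^{J-1}[Y_j \delta W_j,\, P_{j+1}] - \sum_{j=0}^{J-1}\Big[Y_j,\int_{t_j}^{t_{j+1}} Z\,\mathrm{d}t\Big].
\]

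The main obstacle is to collapse the last two sums into the single stochastic term $-\sum_j [Y_j \delta W_j, \int_{t_j}^{t_{j+1}}(Z+g)\,\mathrm{d}t]$ appearing in \cref{eq:S1f-g}. For this, I would pair the backward equation, rewritten as
\[
P_{j+1} - P_j = -\tau \Delta_h P_j - \int_{t_j}^{t_{j+1}}(Z + Q_h g)\,\mathrm{d}t + \int_{t_j}^{t_{j+1}} Z\,\mathrm{d}W,
\]
with $Y_j \delta W_j$. Since $\delta W_j$ has mean zero and is independent of $\mathcal{F}_{t_j}$, every pairing of $Y_j \delta W_j$ with an $\mathcal{F}_{t_j}$-measurable quantity—in particular $P_j$ and $\Delta_h P_j$—is zero. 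The crucial ingredient is the Itô isometry identity
\[
\Big[Y_j \delta W_j,\,\int_{t_j}^{t_{j+1}} Z\,\mathrm{d}W\Big] = \int_{t_j}^{t_{j+1}} [Y_j, Z(t)]\,\mathrm{d}t = \Big[Y_j,\,\int_{t_j}^{t_{j+1}} Z\,\mathrm{d}t\Big],
\]
which exploits that $Y_j \delta W_j = \int_{t_j}^{t_{j+1}} Y_j\,\mathrm{d}W$ with $Y_j$ constant in $t$. Together with the fact that $Q_h$ can again be dropped against $Y_j \delta W_j \in \mathcal{V}_h$, this produces the pointwise relation
\[
\Big[Y_j,\,\int_{t_j}^{t_{j+1}} Z\,\mathrm{d}t\Big] = [Y_j \delta W_j,\, P_{j+1}] + \Big[Y_j \delta W_j,\,\int_{t_j}^{t_{j+1}}(Z+g)\,\mathrm{d}t\Big].
\]
Summing over $j$ and substituting into the previous display gives \cref{eq:S1f-g}, completing the proof.
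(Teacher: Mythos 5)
Your proof is correct and follows essentially the same route as the paper: the telescoping identity with the discrete product rule is precisely the paper's summation-by-parts step, the Laplacian terms cancel by self-adjointness of $\Delta_h$ together with $Y_0 = P_J = 0$, and the stochastic terms are handled identically (killing pairings of $\delta W_j$ against $\mathcal F_{t_j}$-measurable quantities and converting $[Y_j\delta W_j, \int_{t_j}^{t_{j+1}} Z\,\mathrm{d}W]$ into $[Y_j, \int_{t_j}^{t_{j+1}} Z\,\mathrm{d}t]$ via the It\^o isometry). No gaps.
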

\begin{proof} 
  By \cref{eq:calS0,eq:calS1-S2}, a straightforward computation yields
  \begin{align*}
    & \sum_{j=0}^{J-1} \big[
      (\mathcal S_0 f)_{j+1} - (\mathcal S_0 f)_j,
      (\mathcal S_1 g)_{j+1}
    \big] \\
    ={} &
    \sum_{j=0}^{J-1} \Big[
      \tau\Delta_h (\mathcal S_0 f)_{j+1} +
      \int_{t_j}^{t_{j+1}} f(t) \, \mathrm{d}t +
      (\mathcal S_0 f)_j \delta W_j, \,
      (\mathcal S_1 g)_{j+1}
    \Big]
  \end{align*}
  and
  \begin{align*} 
    & \sum_{j=0}^{J-1} \big[
      (\mathcal S_1 g)_j-(\mathcal S_1 g)_{j+1},
      \, (\mathcal S_0 f)_{j}
    \big] \\
    ={} &
    \sum_{j=0}^{J-1} \Big[
      \tau \Delta_h (\mathcal S_1 g)_j +
      \int_{t_j}^{t_{j+1}} (\mathcal S_2g + g)(t) \, \mathrm{d}t -
      \int_{t_j}^{t_{j+1}} \mathcal S_2g(t) \, \mathrm{d}W(t),
      \, (\mathcal S_0 f)_{j}
    \Big] \\
    ={} &
    \sum_{j=0}^{J-1} \Big[
      \tau\Delta_h (\mathcal S_1 g)_j +
      \int_{t_j}^{t_{j+1}} (\mathcal S_2g + g)(t) \, \mathrm{d}t,
      \, (\mathcal S_0 f)_{j}
    \Big].
  \end{align*}
  Since a simple calculation gives, by the facts $ (\mathcal S_0 f)_0 = 0 $ and
  $ (\mathcal S_1 g)_J = 0 $, that
  \begin{align*} 
    \sum_{j=0}^{J-1} \big[
      (\mathcal S_0 f)_{j+1} - (\mathcal S_0 f)_j,
      (\mathcal S_1 g)_{j+1}
    \big] =
    \sum_{j=0}^{J-1} \big[
      (\mathcal S_0 f)_{j},
      \, (\mathcal S_1 g)_j-(\mathcal S_1 g)_{j+1}
    \big],
  \end{align*}
  we then obtain
  \begin{align*} 
    & \sum_{j=0}^{J-1} \Big[
      \tau\Delta_h (\mathcal S_0f)_{j+1} +
      \int_{t_j}^{t_{j+1}}f(t) \, \mathrm{d}t +
      (\mathcal S_0f)_{j} \delta W_j,
      (\mathcal S_1g)_{j+1}
    \Big] \\
    ={} &
    \sum_{j=0}^{J-1} \Big[
      \tau\Delta_h (\mathcal S_1g)_j +
      \int_{t_j}^{t_{j+1}} (\mathcal S_2g + g)(t) \, \mathrm{d}t,
      \, (\mathcal S_0 f)_{j}
    \Big].
  \end{align*}
  As $ [\Delta_h (\mathcal S_0 f)_{i}, (\mathcal S_1 g)_{i}] = [\Delta_h
  (\mathcal S_1 g)_i, (\mathcal S_0 f)_{i}] $ for all $ 1 \leqslant i \leqslant
  J $ and $ (\mathcal S_0 f)_0 = (\mathcal S_1 g)_J = 0 $, it follows that
  \begin{align*} 
    & \sum_{j=0}^{J-1} \Big[
      \int_{t_j}^{t_{j+1}} f(t) \, \mathrm{d}t +
      (\mathcal S_0f)_{j} \delta W_j,
      (\mathcal S_1g)_{j+1}
    \Big] \\
    ={} &
    \sum_{j=0}^{J-1} \Big[
      \int_{t_j}^{t_{j+1}} (\mathcal S_2g + g)(t) \, \mathrm{d}t,
      \, (\mathcal S_0 f)_{j}
    \Big].
  \end{align*}
  Hence, a direct calculation gives
  \begin{equation} 
    \label{eq:200}
    \begin{aligned}
      \sum_{j=0}^{J-1} \Big[
        \int_{t_j}^{t_{j+1}} g(t) \, \mathrm{d}t,
        \, (\mathcal S_0 f)_{j}
      \Big] & =
      \sum_{j=0}^{J-1} \Big(
        \Big[
          \int_{t_j}^{t_{j+1}} f(t) \, \mathrm{d}t,
          \, (\mathcal S_1g)_{j+1}
        \Big] + {} \\
        & \big[
          (\mathcal S_1 g)_{j+1} \delta W_j -
          \int_{t_j}^{t_{j+1}} \mathcal S_2g(t) \, \mathrm{d}t,
          \, (\mathcal S_0 f)_{j}
        \big]
      \Big).
    \end{aligned}
  \end{equation}
  On the other hand, from \cref{eq:calS1-S2} we get
  \begin{align*}
    & -\big[
      (\mathcal S_1g)_{j+1},
      (\mathcal S_0f)_{j} \delta W_j
    \big] \\
    ={} &
    \Big[
      \int_{t_j}^{t_{j+1}} (\mathcal S_2g + g)(t) \, \mathrm{d}t -
      \int_{t_j}^{t_{j+1}} \mathcal S_2g(t) \, \mathrm{d}W(t),
      \, (\mathcal S_0f)_{j} \delta W_j
    \Big] \\
    ={} &
    \Big[
      \int_{t_j}^{t_{j+1}} (\mathcal S_2 g + g)(t) \, \mathrm{d}t,
      \, (\mathcal S_0f)_{j} \delta W_j
    \Big] -
    \Big[
      \int_{t_j}^{t_{j+1}} \mathcal S_2g(t) \, \mathrm{d}t,
      \, (\mathcal S_0f)_{j}
    \Big],
  \end{align*}
  so that
  \begin{align*}
    & \Big[
      (\mathcal S_1g)_{j+1} \delta W_j -
      \int_{t_j}^{t_{j+1}} \mathcal S_2g(t) \, \mathrm{d}t,
      \, (\mathcal S_0f)_{j}
    \Big] \\
    ={} &
    -\Big[
      \int_{t_j}^{t_{j+1}} (\mathcal S_2g + g)(t) \, \mathrm{d}t,
      \, (\mathcal S_0f)_{j} \delta W_j
    \Big].
  \end{align*}
  Inserting the above equality into \cref{eq:200} yields
  \begin{align*}
    & \sum_{j=0}^{J-1} \Big[
      (\mathcal S_0f)_{j}, \,
      \int_{t_j}^{t_{j+1}} g(t) \, \mathrm{d}t
    \Big] \\
    ={} &
    \sum_{j=0}^{J-1} \Big(
      \Big[
        \int_{t_j}^{t_{j+1}} f(t) \, \mathrm{d}t,
        \, (\mathcal S_1g)_{j+1}
      \Big] - \Big[
        (\mathcal S_0f)_{j} \delta W_j, \,
        \int_{t_j}^{t_{j+1}} (\mathcal S_2g + g)(t) \, \mathrm{d}t
      \Big]
    \Big).
  \end{align*}
  Therefore, \cref{eq:S1f-g} follows from the fact
  \[
    \int_0^T [\mathcal S_0 f(t), g(t)] \, \mathrm{d}t =
    \sum_{j=0}^{J-1} \Big[
      (\mathcal S_0f)_j, \,
      \int_{t_j}^{t_{j+1}} g(t) \, \mathrm{d}t
    \Big].
  \]
  This completes the proof.
\end{proof}

\begin{lemma}
  \label{lem:p-pj}
  For any $ g \in L_\mathbb F^2(\Omega;L^2(0,T;L^2(\mathcal O))) $,
  \begin{equation}
    \label{eq:p-pj}
    \sum_{j=0}^{J-1}
    \nm{S_1g - (S_1g)(t_j)}_{
      L^2(\Omega;L^2(t_j,t_{j+1};L^2(\mathcal O)))
    }^2 \lesssim \tau
    \nm{g}_{L^2(\Omega;L^2(0,T;L^2(\mathcal O)))}^2.
  \end{equation}
\end{lemma}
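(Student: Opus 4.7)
The plan is to mimic the forward analogue \cref{lem:y-yj}: write $p(t)-p(t_j)$ explicitly through the mild formulation of $S_1$ on the subinterval $[t_j,t_{j+1}]$, split into three pieces (a semigroup defect on $p(t)$, a Bochner integral involving $g+z$, and an Itô integral in $z$), and bound each piece using only the semigroup estimates in \cref{lem:e^tDelta} together with the regularity \cref{eq:S1-S2-regu} applied to $p=S_1g$ and $z=S_2g$.

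Concretely, setting $s=t_j$ and $t\in(t_j,t_{j+1}]$ in \cref{eq:S1-S2-mild-form}, I get
\begin{align*}
  p(t)-p(t_j) &= \bigl(I-e^{(t-t_j)\Delta}\bigr)p(t)
  - \int_{t_j}^t e^{(r-t_j)\Delta}(g+z)(r)\,\mathrm{d}r
  + \int_{t_j}^t e^{(r-t_j)\Delta}z(r)\,\mathrm{d}W(r).
\end{align*}
For the first term, \cref{eq:I-e^Delta-h1} gives $\nm{(I-e^{(t-t_j)\Delta})p(t)}_{L^2(\mathcal O)}\lesssim (t-t_j)^{1/2}\nm{p(t)}_{\dot H^1(\mathcal O)}$, so integration over $[t_j,t_{j+1}]$ and summation over $j$ produce $\tau\nm{p}_{L^2(\Omega;L^2(0,T;\dot H^1(\mathcal O)))}^2$, which is $\lesssim\tau\nm{g}^2$ by \cref{eq:S1-S2-regu}. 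For the Bochner term, Cauchy–Schwarz together with \cref{eq:e^tDelta} gives $\nm{\int_{t_j}^t e^{(r-t_j)\Delta}(g+z)(r)\,\mathrm{d}r}_{L^2(\Omega;L^2(\mathcal O))}^2\leqslant (t-t_j)\int_{t_j}^t\nm{(g+z)(r)}_{L^2(\Omega;L^2(\mathcal O))}^2\,\mathrm{d}r$; after integrating in $t$ and summing in $j$ this contributes $\tau^2\nm{g+z}_{L^2(\Omega;L^2(0,T;L^2(\mathcal O)))}^2$, again $\lesssim\tau^2\nm{g}^2$ by \cref{eq:S1-S2-regu}. For the stochastic term, the Itô isometry combined with \cref{eq:e^tDelta} gives $\nm{\int_{t_j}^t e^{(r-t_j)\Delta}z(r)\,\mathrm{d}W(r)}_{L^2(\Omega;L^2(\mathcal O))}^2\leqslant\int_{t_j}^t\nm{z(r)}_{L^2(\Omega;L^2(\mathcal O))}^2\,\mathrm{d}r$, so after integrating in $t$ and summing in $j$ we pick up $\tau\nm{z}_{L^2(\Omega;L^2(0,T;L^2(\mathcal O)))}^2\lesssim\tau\nm{g}^2$.

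Combining the three contributions yields $\sum_{j=0}^{J-1}\nm{p-p(t_j)}_{L^2(\Omega;L^2(t_j,t_{j+1};L^2(\mathcal O)))}^2\lesssim\tau\nm{g}_{L^2(\Omega;L^2(0,T;L^2(\mathcal O)))}^2$, which is \cref{eq:p-pj}. There is no real obstacle: the argument is a direct backward-equation analogue of \cref{lem:y-yj}, and the only point requiring minor care is the order of Bochner vs.\ Itô estimates so that the contracting factor $\nm{e^{r\Delta}}_{\mathcal L(L^2(\mathcal O))}\leqslant 1$ is applied before one measures $g+z$ and $z$ in $L^2(\Omega;L^2(0,T;L^2(\mathcal O)))$.
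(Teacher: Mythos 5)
Your proposal is correct and is precisely the argument the paper intends: the paper omits the proof of \cref{lem:p-pj}, remarking only that it is analogous to \cref{lem:y-yj} (and to the proof of \cref{eq:ph-P-l2}), and your three-way splitting via the mild form \cref{eq:S1-S2-mild-form}, with the semigroup defect handled by \cref{eq:I-e^Delta-h1} and the remaining terms by Cauchy--Schwarz, the It\^o isometry and the regularity bound \cref{eq:S1-S2-regu}, is exactly that backward analogue. The only cosmetic difference is that you apply $I-e^{(t-t_j)\Delta}$ to $p(t)$ rather than to $p(t_j)$ as in \cref{lem:y-yj}, which is immaterial since both are controlled in $L^2(\Omega;\dot H^1(\mathcal O))$.
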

\noindent The proof of this lemma is omitted here, since it is similar to that
of \cref{lem:y-yj}; see also the proof of \cref{eq:ph-P-l2}.
\begin{lemma}
  \label{lem:ljm}
  For any $ f,g \in L_\mathbb F^2(\Omega;L^2(0,T;L^2(\mathcal O))) $, we have
  \begin{equation}
    \label{eq:ljm}
    \begin{aligned}
      & \sum_{j=0}^{J-1} \Big[
        \int_{t_j}^{t_{j+1}} (\mathcal S_2 f + f)(t) \, \mathrm{d}t, \,
        (\mathcal S_0 g)_j \delta W_j
      \Big] \\
      \lesssim{} &
      \tau^{1/2} \nm{f}_{
        L^2(\Omega;L^2(0,T;L^2(\mathcal O)))
      } \nm{g}_{
        L^2(\Omega;L^2(0,T;L^2(\mathcal O)))
      }.
    \end{aligned}
  \end{equation}
\end{lemma}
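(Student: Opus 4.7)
The plan is to control each slab-wise bracket by the Cauchy-Schwarz inequality in $L^2(\Omega;L^2(\mathcal O))$ and harvest the factor $\tau^{1/2}$ twice — once from the time integral over $[t_j,t_{j+1}]$, and once from the Brownian increment $\delta W_j$. Concretely, write $A_j := \int_{t_j}^{t_{j+1}}(\mathcal S_2 f + f)(t)\,\mathrm dt$ and $B_j := (\mathcal S_0 g)_j$. Then
\[
\big|[A_j,\,B_j\,\delta W_j]\big| \le \nm{A_j}_{L^2(\Omega;L^2(\mathcal O))}\,\nm{B_j\,\delta W_j}_{L^2(\Omega;L^2(\mathcal O))}.
\]

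For the first factor, Cauchy-Schwarz in the temporal variable gives $\nm{A_j}_{L^2(\Omega;L^2(\mathcal O))} \le \tau^{1/2}\nm{\mathcal S_2 f + f}_{L^2(\Omega;L^2(t_j,t_{j+1};L^2(\mathcal O)))}$. For the second, since $B_j$ is $\mathcal F_{t_j}$-measurable and $\delta W_j$ is independent of $\mathcal F_{t_j}$ with variance $\tau$, conditioning yields $\nm{B_j\,\delta W_j}^2_{L^2(\Omega;L^2(\mathcal O))} = \mathbb E\big[\nm{B_j}^2_{L^2(\mathcal O)}\,\mathbb E_{t_j}|\delta W_j|^2\big] = \tau\,\nm{B_j}^2_{L^2(\Omega;L^2(\mathcal O))}$. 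Summing over $j$ and applying the discrete Cauchy-Schwarz inequality then gives
\[
\sum_{j=0}^{J-1}\big|[A_j,B_j\,\delta W_j]\big| \le \Big(\tau\sum_{j=0}^{J-1}\nm{\mathcal S_2 f + f}^2_{L^2(\Omega;L^2(t_j,t_{j+1};L^2(\mathcal O)))}\Big)^{1/2}\Big(\tau\sum_{j=0}^{J-1}\nm{(\mathcal S_0 g)_j}^2_{L^2(\Omega;L^2(\mathcal O))}\Big)^{1/2}.
\]

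The first factor telescopes to $\tau^{1/2}\nm{\mathcal S_2 f + f}_{L^2(\Omega;L^2(0,T;L^2(\mathcal O)))}$, which \cref{lem:S2-stab} bounds by $\tau^{1/2}\nm{f}_{L^2(\Omega;L^2(0,T;L^2(\mathcal O)))}$; the second equals $\nm{\mathcal S_0 g}_{L^2(\Omega;L^2(0,T;L^2(\mathcal O)))}$ (since $\mathcal S_0 g$ is piecewise constant in time), and is controlled by $\nm{g}_{L^2(\Omega;L^2(0,T;L^2(\mathcal O)))}$ via \cref{lem:S0-stab} combined with the trivial embedding $L^2(\mathcal O) \hookrightarrow \dot H^{-1}(\mathcal O)$. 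Multiplying the two bounds yields the asserted estimate. There is no real obstacle here: the proof is essentially a bookkeeping exercise, and in particular it does not require invoking the mean-zero property of $\delta W_j$ — the naive size $\tau^{1/2}$ of the increment already provides the optimal factor, and the stabilities of $\mathcal S_0$ and $\mathcal S_2$ absorb the remaining structure.
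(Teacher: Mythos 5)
Your proposal is correct and follows essentially the same route as the paper: term-by-term Cauchy--Schwarz in $L^2(\Omega;L^2(\mathcal O))$, the factor $\tau^{1/2}$ from the time integral, the identity $\nm{(\mathcal S_0 g)_j\,\delta W_j}_{L^2(\Omega;L^2(\mathcal O))}=\tau^{1/2}\nm{(\mathcal S_0 g)_j}_{L^2(\Omega;L^2(\mathcal O))}$, a discrete Cauchy--Schwarz over $j$, and then \cref{lem:S2-stab} and \cref{lem:S0-stab} to absorb $\mathcal S_2 f$ and $\mathcal S_0 g$. No gaps.
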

\begin{proof}
  We have
  \begin{align*} 
    & \sum_{j=0}^{J-1} \Big[
      \int_{t_j}^{t_{j+1}} (\mathcal S_2f + f)(t) \, \mathrm{d}t, \,
      (\mathcal S_0g)_j \delta W_j
    \Big] \\
    \leqslant{} &
    \sum_{j=0}^{J-1} \Nm{
      \int_{t_j}^{t_{j+1}} (\mathcal S_2 f + f)(t) \, \mathrm{d}t
    }_{L^2(\Omega;L^2(\mathcal O))} \nm{
      (\mathcal S_0g)_j \delta W_j
    }_{L^2(\Omega;L^2(\mathcal O))} \\
    \leqslant{} &
    \tau^{1/2} \sum_{j=0}^{J-1} \nm{\mathcal S_2f + f}_{
      L^2(\Omega;L^2(t_j,t_{j+1};L^2(\mathcal O)))
    } \nm{(\mathcal S_0g)_j \delta W_j}_{
      L^2(\Omega;L^2(\mathcal O))
    } \\
    ={} &
    \tau \sum_{j=0}^{J-1} \nm{\mathcal S_2f + f}_{
      L^2(\Omega;L^2(t_j,t_{j+1};L^2(\mathcal O)))
    } \nm{(\mathcal S_0g)_j}_{
      L^2(\Omega;L^2(\mathcal O))
    } \\
    \leqslant{} &
    \tau^{1/2} \nm{\mathcal S_2f + f}_{
      L^2(\Omega;L^2(0,T;L^2(\mathcal O)))
    } \nm{\mathcal S_0g}_{
      L^2(\Omega;L^2(0,T;L^2(\mathcal O)))
    },
  \end{align*}
  so that \cref{eq:ljm} follows from \cref{lem:S0-stab,lem:S2-stab}.
\end{proof}

\begin{lemma}
  \label{lem:y-wtY}
  Let $ u $ be the solution of problem \cref{eq:model} with
  \[
    y_d \in L_\mathbb F^2(\Omega;L^2(0,T;L^2(\mathcal O))).
  \]
  Then
  \begin{equation} 
    \label{eq:S0-wtU}
    \begin{aligned}
      & \nm{S_0u-\mathcal S_0\widetilde U}_{
        L^2(\Omega;L^2(0,T;L^2(\mathcal O)))
      } \\
      \lesssim{} &
      \big( (1+\nu^{-1}) \tau^{1/2} + h^2 \big) \big(
        \nm{y_d}_{L^2(\Omega;L^2(0,T;L^2(\mathcal O)))} +
        \nm{u}_{L^2(\Omega;L^2(0,T;L^2(\mathcal O)))}
      \big),
    \end{aligned}
  \end{equation}
  where $ \widetilde U \in U_\text{ad}^{h,\tau} $ is defined by
  \begin{equation} 
    \label{eq:wtU}
    \widetilde U_j := \mathbb E_{t_j} \Big(
      \frac1\tau \int_{t_j}^{t_{j+1}} u(t) \, \mathrm{d}t
    \Big), \quad 0 \leqslant j < J.
  \end{equation}
\end{lemma}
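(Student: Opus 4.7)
The natural decomposition is
\[
  S_0 u - \mathcal S_0 \widetilde U
  = (S_0 u - \mathcal S_0 u) + \mathcal S_0(u - \widetilde U),
\]
and I would bound the two pieces separately. For the first, \cref{thm:conv-S0} directly gives
$\nm{S_0u-\mathcal S_0 u}_{L^2(\Omega;L^2(0,T;L^2(\mathcal O)))}\lesssim(\tau^{1/2}+h^2)\nm{u}_{L^2(\Omega;L^2(0,T;L^2(\mathcal O)))}$. For the second, note that $\widetilde U\in\mathcal X_{h,\tau}$ is $\mathbb F$-progressively measurable (it is piecewise constant with $\widetilde U_j$ being $\mathcal F_{t_j}$-measurable), so $u-\widetilde U\in L_\mathbb F^2(\Omega;L^2(0,T;L^2(\mathcal O)))$ and the stability estimate \cref{eq:S0-stab}, combined with the continuous embedding $L^2(\mathcal O)\hookrightarrow\dot H^{-1}(\mathcal O)$ and the embedding $\dot H^1(\mathcal O)\hookrightarrow L^2(\mathcal O)$, reduces the task to bounding $\nm{u-\widetilde U}_{L^2(\Omega;L^2(0,T;L^2(\mathcal O)))}$.

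To gain the essential $\tau^{1/2}$, I would use the first order optimality condition. The variational inequality \cref{eq:optim-c} applied pointwise in $U_\text{ad}=\{u_*\leqslant v\leqslant u^*\}$ is equivalent to
\[
  u = \Pi_{[u_*,u^*]}\!\big({-p}/{\nu}\big)
  \quad \mathbb P\otimes\mathrm{d}t\otimes\mathrm{d}x\text{-a.e.},
\]
where $p=S_1(y-y_d)$ by \cref{eq:optim-b}. Since the scalar projection $\Pi_{[u_*,u^*]}$ is $1$-Lipschitz, applying it pointwise in $(\omega,x)$ yields
\[
  \nm{u(t)-u(s)}_{L^2(\Omega;L^2(\mathcal O))}
  \leqslant \nu^{-1}\nm{p(t)-p(s)}_{L^2(\Omega;L^2(\mathcal O))}
  \quad\text{for all }0\leqslant s,t\leqslant T.
\]

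For $t\in[t_j,t_{j+1})$ I would split $u(t)-\widetilde U_j=(u(t)-u(t_j))+(u(t_j)-\widetilde U_j)$. The first term is controlled directly by the Lipschitz estimate above. For the second, since $u(t_j)$ is $\mathcal F_{t_j}$-measurable, the contraction property of $\mathbb E_{t_j}$ gives
\[
  \nm{u(t_j)-\widetilde U_j}_{L^2(\Omega;L^2(\mathcal O))}
  = \Nm{\mathbb E_{t_j}\!\Big[u(t_j)-\tfrac{1}{\tau}\!\int_{t_j}^{t_{j+1}}\!u(s)\,\mathrm{d}s\Big]}_{L^2}
  \leqslant \tfrac{1}{\tau}\!\int_{t_j}^{t_{j+1}}\!\nm{u(t_j)-u(s)}_{L^2}\mathrm{d}s,
\]
which is again dominated by $\nu^{-1}$ times the analogous quantity for $p$. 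Squaring, integrating over $[t_j,t_{j+1})$, summing over $j$, and using Cauchy--Schwarz on the average gives
\[
  \nm{u-\widetilde U}_{L^2(\Omega;L^2(0,T;L^2(\mathcal O)))}^2
  \lesssim \nu^{-2}\sum_{j=0}^{J-1}\int_{t_j}^{t_{j+1}}\nm{p(t)-p(t_j)}_{L^2(\Omega;L^2(\mathcal O))}^2\,\mathrm{d}t.
\]
By \cref{lem:p-pj} applied with $g=y-y_d$, the right-hand side is bounded by $\nu^{-2}\tau\nm{y-y_d}_{L^2(\Omega;L^2(0,T;L^2(\mathcal O)))}^2$, which in turn is $\lesssim\nu^{-2}\tau(\nm{u}^2+\nm{y_d}^2)$ by the stability $\nm{y}_{L^2(\Omega;L^2(0,T;L^2(\mathcal O)))}=\nm{S_0u}\lesssim\nm{u}$ provided by \cref{eq:S0-regu}. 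Combining this with the first-piece bound yields \cref{eq:S0-wtU}.

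The one delicate point is the handling of $\widetilde U_j$, which is the conditional expectation of the interval average of $u$ rather than the average itself; the contraction property of $\mathbb E_{t_j}$ together with the $\mathcal F_{t_j}$-measurability of $u(t_j)$ resolves this cleanly and lets the estimate be driven entirely by the temporal regularity of $p$ supplied by \cref{lem:p-pj}.
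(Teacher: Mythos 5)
Your proof is correct and follows essentially the same strategy as the paper's: both reduce the problem, via a convergence estimate for $\mathcal S_0$ plus a stability estimate, to bounding $\nm{u-\widetilde U}_{L^2(\Omega;L^2(0,T;L^2(\mathcal O)))}\lesssim\nu^{-1}\tau^{1/2}(\nm{u}+\nm{y_d})$, which is then obtained from the projection characterization $u=P_{U_\text{ad}}(-\nu^{-1}p)$ and the temporal regularity of $p$ supplied by \cref{lem:p-pj}. The only (immaterial) differences are that you split off $(S_0-\mathcal S_0)u$ and use the discrete stability of $\mathcal S_0$ on $u-\widetilde U$ where the paper splits off $(S_0-\mathcal S_0)\widetilde U$ and uses the continuous stability of $S_0$, and that you control $u(t_j)-\widetilde U_j$ by the contraction property of $\mathbb E_{t_j}$ rather than by the paper's orthogonality (best-approximation) argument for \cref{eq:kdkkdk}.
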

\begin{proof}
  Since
  \begin{align*} 
    & \nm{S_0u - \mathcal S_0\widetilde U}_{
      L^2(\Omega;L^2(0,T;L^2(\mathcal O)))
    } \\
    ={} &
    \nm{S_0(u-\widetilde U) + (S_0 - \mathcal S_0)\widetilde U}_{
      L^2(\Omega;L^2(0,T;L^2(\mathcal O)))
    } \\
    \leqslant{} &
    \nm{S_0(u-\widetilde U)}_{
      L^2(\Omega;L^2(0,T;L^2(\mathcal O)))
    } +
    \nm{(S_0 - \mathcal S_0)\widetilde U}_{
      L^2(\Omega;L^2(0,T;L^2(\mathcal O)))
    } \\
    \lesssim{} &
    \nm{u-\widetilde U}_{L^2(\Omega;L^2(0,T;L^2(\mathcal O)))} +
    \nm{(S_0-\mathcal S_0)\widetilde U}_{
      L^2(\Omega;L^2(0,T;L^2(\mathcal O)))
    } \quad\text{(by \cref{eq:S0-regu})} \\
    \lesssim{} &
    \nm{u-\widetilde U}_{L^2(\Omega;L^2(0,T;L^2(\mathcal O)))} +
    (\tau^{1/2} + h^2)\nm{\widetilde U}_{
      L^2(\Omega;L^2(0,T;L^2(\mathcal O)))
    } \quad\text{(by \cref{eq:S0-conv})} \\
    \lesssim{} &
    \nm{u-\widetilde U}_{L^2(\Omega;L^2(0,T;L^2(\mathcal O)))} +
    (\tau^{1/2} + h^2)\nm{u}_{
      L^2(\Omega;L^2(0,T;L^2(\mathcal O)))
    } \quad\text{(by \cref{eq:wtU})},
  \end{align*}
  it suffices to prove
  \begin{equation}
    \label{eq:u-wtu}
    \begin{aligned}
      & \nm{u-\widetilde U}_{L^2(\Omega;L^2(0,T;L^2(\mathcal O)))} \\
      \lesssim{} &
      \tau^{1/2} \nu^{-1} \big(
        \nm{u}_{L^2(\Omega;L^2(0,T;L^2(\mathcal O)))} +
        \nm{y_d}_{L^2(\Omega;L^2(0,T;L^2(\mathcal O)))}
      \big).
    \end{aligned}
  \end{equation}
  To this end, we proceed as follows. By \cref{thm:optim} we have
  \[ 
    u = P_{U_\text{ad}} (-\nu^{-1} p),
  \]
  where $ P_{U_\text{ad}} $ is the $ L^2(\mathcal O) $-orthogonal projection
  onto $ U_\text{ad} $ and $ p := S_1(S_0u - y_d) $. By the evident inequality
  (cf.~\cite[Lemma 1.10]{Hinze2009})
  \[
    \nm{P_{U_\text{ad}} v - P_{U_\text{ad}}w}_{L^2(\mathcal O)}
    \leqslant \nm{v-w}_{L^2(\mathcal O)} \quad
    \forall v, w \in L^2(\mathcal O),
  \]
  we then obtain
  \begin{align*} 
    & \sum_{j=0}^{J-1} \nm{ u-u(t_j) }_{
      L^2(\Omega;L^2(t_j,t_{j+1};L^2(\mathcal O)))
    }^2 \\
    \leqslant{} &
    \nu^{-2} \sum_{j=0}^{J-1} \nm{p-p(t_j)}_{
      L^2(\Omega;L^2(t_j,t_{j+1};L^2(\mathcal O)))
    }^2 \\
    \lesssim{} &
    \tau \nu^{-2} \nm{S_0u-y_d}_{
      L^2(\Omega;L^2(0,T;L^2(\mathcal O)))
    }^2 \quad\text{(by \cref{eq:p-pj})} \\
    \lesssim{} &
    \tau \nu^{-2} \big(
      \nm{y_d}_{L^2(\Omega;L^2(0,T;L^2(\mathcal O)))}^2 +
      \nm{u}_{L^2(\Omega;L^2(0,T;L^2(\mathcal O)))}^2
    \big) \quad\text{(by \cref{eq:S0-regu}).}
  \end{align*}
  Hence, \cref{eq:u-wtu} follows from the inequality
  \begin{equation}
    \label{eq:kdkkdk}
    \sum_{j=0}^{J-1} \nm{u-\widetilde U}_{
      L^2(\Omega;L^2(t_j,t_{j+1};L^2(\mathcal O)))
    }^2 \leqslant \sum_{j=0}^{J-1}
    \nm{u-u(t_j)}_{
      L^2(\Omega;L^2(t_j,t_{j+1};L^2(\mathcal O)))
    }^2.
  \end{equation}
  The above inequality is a direct consequence of \cref{eq:wtU}. Indeed, by
  \cref{eq:wtU} we have
  \[
    \int_{t_j}^{t_{j+1}} [u-\widetilde U, \widetilde U] \, \mathrm{d}t =
    \int_{t_j}^{t_{j+1}} [u-\widetilde U, u(t_j)] \, \mathrm{d}t = 0,
    \quad 0 \leqslant j < J,
  \]
  and so
  \begin{small}
  \begin{align*}
    & \nm{u-\widetilde U}_{L^2(\Omega;L^2(t_j,t_{j+1};L^2(\mathcal O)))}^2
    = \int_{t_j}^{t_{j+1}} [u-\widetilde U, u - \widetilde U] \, \mathrm{d}t
    =
    \int_{t_j}^{t_{j+1}} [u-\widetilde U, u - u(t_j)] \, \mathrm{d}t \\
    \leqslant{} &
    \nm{u-\widetilde U}_{L^2(\Omega;L^2(t_j,t_{j+1};L^2(\mathcal O)))}
    \nm{u-u(t_j)}_{
      L^2(\Omega;L^2(t_j,t_{j+1};L^2(\mathcal O)))
    }, \quad 0 \leqslant j < J,
  \end{align*}
  \end{small}
  It follows that
  \[
    \nm{u-\widetilde U}_{L^2(\Omega;L^2(t_j,t_{j+1};L^2(\mathcal O)))}
    \leqslant \nm{u-u(t_j)}_{L^2(\Omega;L^2(t_j,t_{j+1};L^2(\mathcal O)))},
    \, 0 \leqslant j < J,
  \]
  which implies \cref{eq:kdkkdk}. This completes the proof.
\end{proof}

Then, we are in a position to conclude the proof of \cref{thm:conv} as follows.

\medskip\noindent{\bf Proof of \cref{thm:conv}.} For convenience, in this proof
  we will use the following convention: $ C $ denotes a positive constant
  depending only on $ \nu $, $ u_* $, $ u^* $, $ y_d $, $ T $, $ \mathcal O $,
  and the regularity parameters of $ \mathcal K_h $, and its value may differ at
  each occurrence. Let $ \widetilde U $ be defined by \cref{eq:wtU}. By
  definition we have the following two evident equalities:
  \begin{equation}
    \label{eq:250}
    \int_0^T [U, \widetilde U - U] \, \mathrm{d}t =
    \int_0^T [U, u-U] \, \mathrm{d}t
  \end{equation}
  and
  \begin{equation}
    \label{eq:249}
    \begin{aligned}
      & \nm{u-U}_{L^2(\Omega;L^2(0,T;L^2(\mathcal O)))}^2 \\
      ={} &
      \nm{u-\widetilde U}_{L^2(\Omega;L^2(0,T;L^2(\mathcal O)))}^2 +
      \nm{U-\widetilde U}_{L^2(\Omega;L^2(0,T;L^2(\mathcal O)))}^2.
    \end{aligned}
  \end{equation}
  The rest of the proof is divided into the following three steps.

  {\it Step 1.} Let us prove
  \begin{small}
  \begin{equation}
    \label{eq:0}
    \frac12 \nm{S_0u-\mathcal S_0U}_{L^2(\Omega;L^2(0,T;L^2(\mathcal O)))}^2 +
    \nu \nm{u-U}_{L^2(\Omega;L^2(0,T;L^2(\mathcal O)))}^2 \leqslant
    \mathbb I_1 + \mathbb I_2 + \mathbb I_3,
  \end{equation}
  \end{small}
  where
  \begin{small}
  \begin{align}
    \mathbb I_1 &:=
    \frac12\nm{S_0u-\mathcal S_0 \widetilde U}_{
      L^2(\Omega;L^2(0,T;L^2(\mathcal O)))
    }^2, \label{eq:I1} \\
    \mathbb I_2 &=
    \sum_{j=0}^{J-1} \Big[
      \int_{t_j}^{t_{j+1}}
      \big( (\mathcal S_2 + I)(y_d - S_0u) \big)(t) \, \mathrm{d}t, \,
      \big( \mathcal S_0(\widetilde U - U) \big)_j \delta W_j
    \Big], \label{eq:I2} \\
    \mathbb I_3 &:=
    \int_0^T \big[ S_1(S_0u-y_d), U\!-\!u \big] \, \mathrm{d}t -
    \sum_{j=0}^{J-1} \int_{t_j}^{t_{j+1}}
    \big[
      \big( \mathcal S_1(S_0u-y_d) \big)_{j+1}, U-\widetilde U
    \big] \, \mathrm{d}t.
    \label{eq:I3}
  \end{align}
  \end{small}
  Since $ U $ is the solution of problem \cref{eq:discretization},
  by \cref{lem:S0f-g} we deduce that
  \begin{equation} 
    \label{eq:discr-optim}
    \sum_{j=0}^{J-1}
    \int_{t_j}^{t_{j+1}} \big[
      \big(
        \mathcal S_1(\mathcal S_0U-y_d)
      \big)_{j+1}, \, \widetilde U-U
    \big] \, \mathrm{d}t +
    \nu \int_0^T [U, \widetilde U - U] \, \mathrm{d}t +
    \mathcal A_0 \geqslant 0,
  \end{equation}
  where
  \begin{equation}
    \label{eq:calA0}
    \mathcal A_0 := -\sum_{j=0}^{J-1} \Big[
      \int_{t_j}^{t_{j+1}} \big(
        (\mathcal S_2+I)(\mathcal S_0U-y_d)
      \big)(t) \, \mathrm{d}t, \,
      (\mathcal S_0(\widetilde U-U))_j \delta W_j
    \Big].
  \end{equation}
  By \cref{thm:optim} we get
  \begin{equation} 
    \label{eq:P-wtP0}
    \int_0^T [S_1(S_0u-y_d) + \nu u, \, U-u] \, \mathrm{d}t \geqslant 0.
  \end{equation}
  Combining \cref{eq:discr-optim,eq:P-wtP0} yields
  \begin{small}
  \begin{align*} 
    & \nu \int_0^T [u,u-U] - [U, \widetilde U-U] \, \mathrm{d}t  \\
    \leqslant{} &
    \int_0^T [S_1(S_0u-y_d), U-u] \, \mathrm{d}t +
    \sum_{j=0}^{J-1} \int_{t_j}^{t_{j+1}} \big[
      (\mathcal S_1(\mathcal S_0U-y_d))_{j+1}, \, \widetilde U - U
    \big] \, \mathrm{d}t + \mathcal A_0 \\
    \leqslant{} &
    \mathbb I_3 + \sum_{j=0}^{J-1} \int_{t_j}^{t_{j+1}}
    \big[
      (\mathcal S_1(S_0u-\mathcal S_0U))_{j+1}, U - \widetilde U
    \big] + \mathcal A_0 \quad\text{(by \cref{eq:I3}),}
  \end{align*}
  \end{small}
  so that by \cref{eq:250} we obtain
  \begin{small}
  \begin{equation}
    \label{eq:sj}
    \nu \nm{u-U}_{L^2(\Omega;L^2(0,T;L^2(\mathcal O)))}^2
    \leqslant
    \mathbb I_3 + \sum_{j=0}^{J-1} \int_{t_j}^{t_{j+1}}
    \big[
      (\mathcal S_1(S_0u-\mathcal S_0U))_{j+1}, U - \widetilde U
    \big] + \mathcal A_0.
  \end{equation}
  \end{small}
  We also have
  \begin{align*} 
    & \sum_{j=0}^{J-1} \int_{t_j}^{t_{j+1}} \big[
      (\mathcal S_1(S_0u-\mathcal S_0U))_{j+1},
      \, U - \widetilde U
    \big] \, \mathrm{d}t \\
    ={} &
    \int_0^T [S_0u - \mathcal S_0U,
    \, \mathcal S_0 (U-\widetilde U)] \, \mathrm{d}t +
    \mathcal A_1 \quad\text{(by \cref{eq:S1f-g})} \\
    ={} &
    -\nm{S_0u-\mathcal S_0U}_{L^2(\Omega;L^2(0,T;L^2(\mathcal O)))}^2 +
    \int_0^T [S_0u-\mathcal S_0U, S_0u-\mathcal S_0\widetilde U] \, \mathrm{d}t +
    \mathcal A_1 \\
    \leqslant{} &
    -\frac12 \nm{S_0u-\mathcal S_0U}_{L^2(\Omega;L^2(0,T;L^2(\mathcal O)))}^2 +
    \frac12 \nm{S_0u-\mathcal S_0\widetilde U}_{
      L^2(\Omega;L^2(0,T;L^2(\mathcal O)))
    }^2 + \mathcal A_1 \\
    ={} &
    -\frac12 \nm{S_0u-\mathcal S_0U}_{L^2(\Omega;L^2(0,T;L^2(\mathcal O)))}^2 +
    \mathbb I_1 + \mathcal A_1 \quad\text{(by \cref{eq:I1}),}
  \end{align*}
  where
  \begin{equation}
    \label{eq:calA1}
    \mathcal A_1 := \sum_{j=0}^{J-1} \Big[
      \int_{t_j}^{t_{j+1}} \big(
        (\mathcal S_2 + I)(S_0u-\mathcal S_0U)
      \big)(t) \, \mathrm{d}t, \,
      ( \mathcal S_0(U-\widetilde U) )_j\delta W_j
    \Big].
  \end{equation}
  Finally, inserting the above estimate into \cref{eq:sj} yields
  \begin{align*}
    & \nu \nm{u-U}_{L^2(\Omega;L^2(0,T;L^2(\mathcal O)))}^2 \\
    \leqslant{} &
    \mathbb I_3 - \frac12 \nm{S_0u-\mathcal S_0U}_{
      L^2(\Omega;L^2(0,T;L^2(\mathcal O)))
    }^2 + \mathbb I_1 +  \mathcal A_1 + \mathcal A_0 \\
    ={} &
    \mathbb I_3 - \frac12 \nm{S_0u-\mathcal S_0U}_{
      L^2(\Omega;L^2(0,T;L^2(\mathcal O)))
    }^2 + \mathbb I_1 +  \mathbb I_2
    \quad\text{(by \cref{eq:I2,eq:calA0,eq:calA1}),}
  \end{align*}
  which indicates the desired inequality \cref{eq:0}.

  {\it Step 2.} Let us estimate $ \mathbb I_1 $, $ \mathbb I_2 $ and $ \mathbb
  I_3 $. For $ \mathbb I_1 $, by \cref{lem:y-wtY} and the fact $ u \in
  U_\text{ad} $ we have
  \begin{equation}
    \label{eq:22}
    \mathbb I_1 \leqslant C (\tau^{1/2} + h^2)^2.
  \end{equation}
  For $ \mathbb I_2 $, from \cref{lem:ljm,eq:249} we deduce
  \begin{align*} 
    \mathbb I_2 & \leqslant C \tau^{1/2}
    \nm{y_d - S_0u}_{L^2(\Omega;L^2(0,T;L^2(\mathcal O)))}
    \nm{\widetilde U - U}_{
      L^2(\Omega;L^2(0,T;L^2(\mathcal O)))
    } \\
    & \leqslant C \tau^{1/2}
    \nm{y_d - S_0u}_{L^2(\Omega;L^2(0,T;L^2(\mathcal O)))}
    \nm{u-U}_{
      L^2(\Omega;L^2(0,T;L^2(\mathcal O)))
    }.
  \end{align*}
  Since \cref{eq:S0-regu} and the fact $ u \in U_\text{ad} $ imply
  \begin{equation}
    \label{eq:3000}
    \nm{y_d - S_0u}_{L^2(\Omega;L^2(0,T;L^2(\mathcal O)))}
    \leqslant C,
  \end{equation}
  it follows that
  \begin{equation}
    \label{eq:1}
    \mathbb I_2 \leqslant C \tau^{1/2} \nm{u-U}_{
      L^2(\Omega;L^2(0,T;L^2(\mathcal O)))
    }.
  \end{equation}
  Now let us estimate $ \mathbb I_3 $. By \cref{eq:S1-conv-new,eq:249,eq:3000}
  we get
  \begin{align*}
    & \sum_{j=0}^{J-1} \int_{t_j}^{t_{j+1}} \big[
      S_1(S_0u-y_d) - (\mathcal S_1(S_0u-y_d))_{j+1},
      \, U-\widetilde U
    \big] \, \mathrm{d}t \\
    \leqslant{} &
    C(\tau^{1/2} + h^2) \nm{u-U}_{
      L^2(\Omega;L^2(0,T;L^2(\mathcal O)))
    }
  \end{align*}
  and by \cref{eq:p-pj,eq:249,eq:3000} we get
  \begin{align*} 
    & \sum_{j=0}^{J-1} \int_{t_j}^{t_{j+1}}
    \Big[
      \big(S_1(S_0u-y_d)\big)(t) -
      \big(S_1(S_0u-y_d)\big)(t_j),
      \, \widetilde U - u
    \Big] \, \mathrm{d}t \\
    \leqslant{} &
    C \tau^{1/2} \nm{u - U}_{
      L^2(\Omega;L^2(0,T;L^2(\mathcal O)))
    }.
  \end{align*}
  Consequently,
  \begin{small}
  \begin{align} 
    \mathbb I_3 &=
    \sum_{j=0}^{J-1} \int_{t_j}^{t_{j+1}}
    \big[
      S_1(S_0u-y_d) - (\mathcal S_1(S_0u-y_d))_{j+1}, \,
      U - \widetilde U
    \big] \, \mathrm{d}t \notag \\
    & \qquad\qquad{} +
    \int_0^T [S_1(S_0u-y_d), \widetilde U - u] \, \mathrm{d}t
    \quad\text{(by \cref{eq:I3})} \notag \\
    &=
    \sum_{j=0}^{J-1} \int_{t_j}^{t_{j+1}}
    \big[
      S_1(S_0u-y_d) - (\mathcal S_1(S_0u-y_d))_{j+1}, \,
      U - \widetilde U
    \big] \, \mathrm{d}t \notag \\
    & \qquad\qquad{} +
    \sum_{j=0}^{J-1} \int_{t_j}^{t_{j+1}} [
      S_1(S_0u-y_d) - (S_1(S_0u-y_d))(t_j) , \widetilde U - u
    ] \, \mathrm{d}t \quad\text{(by \cref{eq:wtU})} \notag \\
    & \leqslant C (\tau^{1/2} + h^2)
    \nm{u-U}_{
      L^2(\Omega;L^2(0,T;L^2(\mathcal O)))
    }. \label{eq:3}
  \end{align}
  \end{small}

  {\it Step 3.} Combining \cref{eq:0,eq:22,eq:1,eq:3} yields
  \begin{align*} 
    & \frac12 \nm{S_0u-\mathcal S_0U}_{
      L^2(\Omega;L^2(0,T;L^2(\mathcal O)))
    }^2 + \nu \nm{u-U}_{
      L^2(\Omega;L^2(0,T;L^2(\mathcal O)))
    }^2 \\
    \leqslant{} & C (\tau^{1/2} + h^2) \big(
      \tau^{1/2}+h^2 +
      \nm{u-U}_{L^2(\Omega;L^2(0,T;L^2(\mathcal O)))}
    \big).
  \end{align*}
  Applying the Young's inequality with $ \epsilon $ to the above inequality then
  gives
  \begin{align*}
    & \nm{S_0u-\mathcal S_0U}_{
      L^2(\Omega;L^2(0,T;L^2(\mathcal O)))
    }^2 + \nm{u-U}_{
      L^2(\Omega;L^2(0,T;L^2(\mathcal O)))
    }^2 \\
    \leqslant{} & C(\tau^{1/2} + h^2)^2,
  \end{align*}
  which implies the desired inequality \cref{eq:conv}. This completes the proof.
\hfill\ensuremath{\blacksquare}

\begin{remark}
  We would like to give a brief proof of \cref{eq:discr-optim} as follows. A
  straightforward computation gives
  \begin{align*}
    & \lim_{s \to 0+} \frac{
      J_{h,\tau}(U + s(\widetilde U-U)) - J_{h,\tau}(U)
    } s \\
    ={} &
    \int_0^T [\mathcal S_0 U - y_d, \, \mathcal S_0(\widetilde U-U)] \, \mathrm{d}t +
    \nu \int_0^T [U, \widetilde U-U] \, \mathrm{d}t \\
    ={} &
    \sum_{j=0}^{J-1} \int_{t_j}^{t_{j+1}}
    [\widetilde U-U, (\mathcal S_1(\mathcal S_0U-y_d))_{j+1}] \, \mathrm{d}t \\
    & \quad {} - \sum_{j=0}^{J-1}
    \big[
      (\mathcal S_0(\widetilde U-U))_j \delta W_j,
      \int_{t_j}^{t_{j+1}} (\mathcal S_2 + I)(\mathcal S_0U - y_d) \, \mathrm{d}t
    \big] \\
    & \quad{} + \nu \int_0^T [U, \widetilde U - U] \, \mathrm{d}t
    \quad\text{\text{(by \cref{lem:S0f-g})}} \\
    ={} &
    \sum_{j=0}^{J-1} \int_{t_j}^{t_{j+1}}
    [(\mathcal S_1(\mathcal S_0U - y_d))_{j+1}, \widetilde U - U] \, \mathrm{d}t
    + \nu \int_0^T [U, \widetilde U - U] \, \mathrm{d}t +
    \mathcal A_0,
  \end{align*}
  by the definition of $ \mathcal A_0 $. Since $ U $ is the unique solution of
  problem \cref{eq:discretization}, we then obtain
  \[
    \sum_{j=0}^{J-1} \int_{t_j}^{t_{j+1}}
    [(\mathcal S_1(\mathcal S_0U - y_d))_{j+1}, \widetilde U - U] \, \mathrm{d}t
    + \nu \int_0^T [U, \widetilde U - U] \, \mathrm{d}t +
    \mathcal A_0 \geqslant 0,
  \]
  namely equation \cref{eq:discr-optim}.
\end{remark}






\appendix
\section{Proof of \texorpdfstring{\cref{lem:lbj}}{}}
\label{sec:lbj}
Since \cref{eq:xx-2} can be proved by the same argument as that used in the
proof of \cite[Theorem 5.1]{Vexler2008I}, we only prove \cref{eq:xx-1}, using
the techniques in the proof of \cite[Theorem 12.1]{Thomee2006}. Let
\[ 
  \theta_j := P_j - \eta(t_j), \quad 0 \leqslant j \leqslant J.
\]
By \cref{eq:2074} we have
\begin{equation}
  \label{eq:2075}
  \begin{aligned}
    & \sum_{j=0}^{J-1} [P_j - P_{j+1}, (-\Delta_h)^{-1} \theta_j]
    + \sum_{j=0}^{J-1} \tau [P_j, \theta_j] \\
    ={} &
    \sum_{j=0}^{J-1} \int_{t_j}^{t_{j+1}} [Q_hg(t), (-\Delta_h)^{-1} \theta_j]
    \, \mathrm{d}t.
  \end{aligned}
\end{equation}
Since \cref{eq:2073} implies
\begin{equation}
  \begin{cases}
    -\eta'(t) - \Delta_h \eta(t) = Q_h g(t),
    \quad 0 \leqslant t \leqslant T, \\
    \eta(T) = 0,
  \end{cases}
\end{equation}
we obtain
\begin{align*}
  & \sum_{j=0}^{J-1} \int_{t_j}^{t_{j+1}}
  [-\eta'(t), (-\Delta_h)^{-1}\theta_j] \, \mathrm{d}t +
  \sum_{j=0}^{J-1} \int_{t_j}^{t_{j+1}} [\eta(t), \theta_j] \, \mathrm{d}t \\
  ={} &
  \sum_{j=0}^{J-1} \int_{t_j}^{t_{j+1}} [Q_hg(t), (-\Delta_h)^{-1} \theta_j]
  \, \mathrm{d}t.
\end{align*}
By integration by parts, we then obtain
\begin{equation}
  \label{eq:2076}
  \begin{aligned}
    & \sum_{j=0}^{J-1} [\eta(t_j) - \eta(t_{j+1}), (-\Delta_h)^{-1}\theta_j] +
    \sum_{j=0}^{J-1} \int_{t_j}^{t_{j+1}} [\eta(t), \theta_j] \, \mathrm{d}t \\
    ={} &
    \sum_{j=0}^{J-1} \int_{t_j}^{t_{j+1}} [Q_hg(t), (-\Delta_h)^{-1} \theta_j]
    \, \mathrm{d}t.
  \end{aligned}
\end{equation}
Combining \cref{eq:2075,eq:2076} yields
\begin{align*}
  \sum_{j=0}^{J-1}
  [\theta_j - \theta_{j+1}, (-\Delta_h)^{-1} \theta_j] +
  \sum_{j=0}^{J-1} \int_{t_j}^{t_{j+1}}
  [P_j - \eta(t), \theta_j] \, \mathrm{d}t = 0.
\end{align*}
Since
\begin{align*}
  \sum_{j=0}^{J-1} [\theta_j - \theta_{j+1}, (-\Delta_h)^{-1}\theta_j]
  & = \sum_{j=0}^{J-1} \nm{\theta_j}_{\dot H_h^{-1}(\mathcal O)}^2 -
  [(-\Delta_h)^{-1/2}\theta_j, (-\Delta_h)^{-1/2} \theta_{j+1}] \\
  & \geqslant
  \sum_{j=0}^{J-1} \big(
    \nm{\theta_j}_{\dot H_h^{-1}(\mathcal O)}^2 -
    \nm{\theta_j}_{\dot H_h^{-1}(\mathcal O)}
    \nm{\theta_{j+1}}_{\dot H_h^{-1}(\mathcal O)}
  \big) \\
  & \geqslant
  \frac12 \nm{\theta_0}_{\dot H_h^{-1}(\mathcal O)}^2 -
  \frac12 \nm{\theta_J}_{\dot H_h^{-1}(\mathcal O)}^2 \\
  & =
  \frac12 \nm{\theta_0}_{\dot H_h^{-1}(\mathcal O)}^2
  \quad\text{(by the fact $ P_J = \eta(t_J) = 0 $),}
\end{align*}
it follows that
\[
  \frac12 \nm{\theta_0}_{\dot H_h^{-1}(\mathcal O)}^2 +
  \sum_{j=0}^{J-1} \int_{t_j}^{t_{j+1}}
  [P_j - \eta(t), \theta_j] \, \mathrm{d}t \leqslant 0.
\]
Hence,
\begin{align*}
  & \frac12 \nm{\theta_0}_{\dot H_h^{-1}(\mathcal O)}^2 +
  \sum_{j=0}^{J-1} \tau \nm{\theta_j}_{L^2(\mathcal O)}^2 \\
  \leqslant{} &
  \sum_{j=0}^{J-1} \int_{t_j}^{t_{j+1}}
  [\eta(t) - \eta(t_j), \theta_j] \, \mathrm{d}t \\
  \leqslant{} &
  \Big(
    \sum_{j=0}^{J-1} \nm{\eta - \eta(t_j)}_{
      L^2(t_j,t_{j+1};L^2(\mathcal O))
    }^2
  \Big)^{1/2} \Big(
    \sum_{j=0}^{J-1} \tau \nm{\theta_j}_{L^2(\mathcal O)}^2
  \Big)^{1/2} \\
  \leqslant{} &
  \frac12\sum_{j=0}^{J-1} \nm{\eta - \eta(t_j)}_{
    L^2(t_j,t_{j+1};L^2(\mathcal O))
  }^2 + \frac12 \sum_{j=0}^{J-1} \tau \nm{\theta_j}_{L^2(\mathcal O)}^2,
\end{align*}
which implies
\begin{align*}
  \nm{\theta_0}_{\dot H_h^{-1}(\mathcal O)}
  & \leqslant \Big(
    \sum_{j=0}^{J-1} \nm{\eta - \eta(t_j)}_{L^2(t_j,t_{j+1};L^2(\mathcal O))}^2
  \Big)^{1/2}.
\end{align*}
Therefore, by the standard estimate
\begin{align*}
  \sum_{j=0}^{J-1} \nm{\eta - \eta(t_j)}_{L^2(\mathcal O)}^2
  \lesssim \tau^2 \nm{\eta'}_{L^2(t_j,t_{j+1};L^2(\mathcal O))}^2
  \lesssim \tau^2 \nm{g}_{L^2(0,T;L^2(\mathcal O))}^2,
\end{align*}
we obtain
\[
  \nm{\theta_0}_{\dot H_h^{-1}(\mathcal O)}
  \lesssim \tau \nm{g}_{L^2(0,T;L^2(\mathcal O))},
\]
namely,
\[
  \nm{\eta(0) - P_0}_{\dot H_h^{-1}(\mathcal O)}
  \lesssim \tau \nm{g}_{L^2(0,T;L^2(\mathcal O))}.
\]
Similarly, we can prove
\[
  \nm{\eta(t_j) - P_j}_{\dot H_h^{-1}(\mathcal O)}
  \lesssim \tau \nm{g}_{L^2(0,T;L^2(\mathcal O))},
  \quad 1 \leqslant j < J.
\]
Since $ P_J = \eta(t_J) = 0 $, this proves \cref{eq:xx-1} and thus completes
the proof.

\bibliographystyle{plain}

\end{document}